\documentclass[11pt,a4paper,reqno, draft]{amsart}
\usepackage{amsmath, amssymb, latexsym, enumerate,  bm}

\usepackage{thmtools}
\usepackage{float}

\usepackage[pdftex]{hyperref}
\usepackage{cleveref}
\usepackage{bbm}
\usepackage{cases}
\usepackage{array}
\usepackage{tikz-cd}
\usepackage[all]{xy}

\usepackage[hmarginratio={1:1},vmarginratio={1:1},lmargin=80.0pt,tmargin=90.0pt]{geometry}

\usepackage{tikz}
\tikzset{anchorbase/.style={baseline={([yshift=-0.5ex]current bounding box.center)}}}
\usetikzlibrary{decorations.markings}
\usetikzlibrary{decorations.pathreplacing}
\usetikzlibrary{arrows,shapes,positioning,backgrounds}
\tikzstyle directed=[postaction={decorate,decoration={markings,
    mark=at position #1 with {\arrow{>}}}}]
\tikzstyle rdirected=[postaction={decorate,decoration={markings,
    mark=at position #1 with {\arrow{<}}}}]
    
\usepackage{graphicx}
\usepackage[vcentermath]{youngtab}
\usepackage{nicefrac}
\numberwithin{equation}{section}



\newtheorem{theorem}[subsubsection]{Theorem}
\newtheorem{lemma}[theorem]{Lemma}
\newtheorem{prop}[theorem]{Proposition}
\newtheorem{corollary}[subsubsection]{Corollary}

\theoremstyle{definition}
\newtheorem{definition}[subsubsection]{Definition}

\newtheorem{remark}[theorem]{Remark}

\newtheorem{example}[subsubsection]{Example}

\newtheorem{question}[theorem]{Question}

\newcommand{\bB}{\mathbf{B}}

\newcommand{\cL}{\mathcal{L}}
\newcommand{\cH}{\mathcal{H}}

\newcommand{\Fais}{\mathsf{Fais}}
\newcommand{\Sch}{\mathsf{Sch}}
\newcommand{\Fun}{\mathsf{Fun}}
\newcommand{\Alg}{\mathsf{Alg}}
\newcommand{\Set}{\mathsf{Set}}

\newcommand{\Rep}{\mathsf{Rep}}

\newcommand{\RD}{\mZ\mbox{-}\mathcal{R}\mathcal{D}}
\newcommand{\RG}{\mathcal{R}e\mathcal{G}r}
\newcommand{\pRD}{\mZ[1/p]\mbox{-}\mathcal{R}\mathcal{D}}
\newcommand{\pRG}{\mathcal{P}e\mathcal{R}e\mathcal{G}r}

\newcommand{\tto}{\twoheadrightarrow}
\newcommand{\cO}{\mathcal{O}}

\newcommand{\mN}{\mathbb{N}}
\newcommand{\mR}{\mathbb{R}}
\newcommand{\mQ}{\mathbb{Q}}
\newcommand{\mG}{\mathbb{G}}
\newcommand{\bT}{\mathbf{T}}
\newcommand{\bG}{\mathbf{G}}
\newcommand{\bH}{\mathbf{H}}
\newcommand{\bU}{\mathbf{U}}
\newcommand{\bQ}{\mathbf{Q}}
\newcommand{\mZ}{\mathbb{Z}}
\newcommand{\mA}{\mathbb{A}}
\newcommand{\sX}{\mathfrak{X}}

\newcommand{\sT}{\mathfrak{T}}
\newcommand{\sY}{\mathfrak{Y}}
\newcommand{\mC}{\mathbb{C}}
\newcommand{\mF}{\mathbb{F}}

\newcommand{\End}{\mathrm{End}}
\newcommand{\Aut}{\mathrm{Aut}}
\newcommand{\im}{\mathrm{im}}

\newcommand{\Ext}{\mathrm{Ext}}
\newcommand{\Res}{\mathrm{Res}}
\newcommand{\Hom}{\mathrm{Hom}}
\newcommand{\soc}{\mathrm{soc}}
\newcommand{\Fr}{\mathrm{Fr}}

\newcommand{\Lie}{\mathrm{Lie}}

\newcommand{\op}{\mathrm{op}}
\newcommand{\Ind}{\mathrm{Ind}}

\newcommand{\Spec}{\mathrm{Spec}}
\newcommand{\Vecc}{\mathrm{Vec}}
\newcommand{\perf}{\mathrm{perf}}
\newcommand{\red}{\mathrm{red}}

\newcommand{\id}{\mathrm{id}}

\newcommand{\blam}{{\bm{\lambda}}}
\newcommand{\bnab}{{\bm{\nabla}}}
\newcommand{\bmu}{\bm{\mu}}
\newcommand{\bnu}{\bm{\nu}}

\newcommand{\bL}{\mathbf{L}}
\newcommand{\bR}{\mathbf{R}}
\newcommand{\bX}{\mathbf{X}}
\newcommand{\bY}{\mathbf{Y}}
\newcommand{\bW}{\mathbf{W}}

\begin{document}
\title[Perfection ]{Perfecting group schemes}
\author{Kevin Coulembier and Geordie Williamson}
\address{K.C.: School of Mathematics and Statistics, University of Sydney, NSW 2006, Australia}
\email{kevin.coulembier@sydney.edu.au}

\address{G.W.: School of Mathematics and Statistics, University of Sydney, NSW 2006, Australia}
\email{g.williamson@sydney.edu.au}


\keywords{reductive group, perfect scheme, classifying space, root data}
\subjclass[2020]{20G07, 20G05, 55R35, 20F55}

\begin{abstract}
We initiate a systematic study of the perfection of affine group schemes of finite type over fields of positive characteristic. The main result intrinsically characterises and classifies the perfections of reductive groups, and obtains a bijection with the set of classifying spaces of compact connected Lie groups topologically localised away from the characteristic. We also study the representations of perfectly reductive groups. We establish a highest weight classification of simple modules, the decomposition into blocks, and relate extension groups to those of the underlying abstract group.

\end{abstract}

\maketitle


\section*{Introduction}

For a (group) scheme over a field $k$ of characteristic $p>0$, its
`perfection' is defined as the inverse limit over the Frobenius
homomorphism. In this paper we study the perfection of group schemes
and their representation theory. We place particular emphasis on
reductive groups. We obtain an intrinsic characterisation (`perfectly
reductive groups') and give a classification in terms of root data
`with $p$ inverted'. We also  give a highest weight classification of
simple modules for perfectly reductive groups, establish the block
decomposition, and make a first step towards the study of multiplicity
questions. Finally, we prove that perfectly reductive groups and the classifying
spaces of compact Lie groups localised away from $p$ are classified by
the same data. This result is the `perfect analogue' of the fact that
reductive groups over algebraically closed fields and compact Lie
groups are both classified by root data.

The motivations for the current work were three-fold, and came from
several directions. Before describing the structure of this paper in more
detail, we outline these motivations and possible future
directions.

\subsection*{Perfect representability} Sometimes
functors on rings in characteristic $p$ are only well-behaved on
perfect algebras (that is, algebras for which the Frobenius
homomorphism is an isomorphism). A prominent example is the functor of
Witt vectors and its relatives. An important observation (see for instance
\cite{BS, BD, Zh}) is that if a functor on the category of perfect
commutative $k$-algebras can be represented by a scheme, it determines
the scheme `up to perfection'. An example of such a setting is the
Witt vector affine Grassmannian, which plays a prominent role in
recent advances in the Langlands program.

It is important that passage to the perfection gives an
isomorphism of \'etale topoi. In particular, constructions built via
\'etale sheaves (like \'etale cohomology, or categories of perverse
sheaves in the \'etale topology) are insensitive to passage to the perfection. This fact
plays an important role in \cite{Zh}, where a mixed characteristic
analogue of the geometric Satake equivalence is obtained. Similarly,
it plays an important role in \cite{BD}, where the `Serre dual' of a
unipotent group is shown to be the perfection of a unipotent group,
and its character sheaves are studied.

 By the above, also the \'etale homotopy type of a (simplicial) scheme only depends
 on its perfection. In \cite{Fr}, Friedlander used this homotopy type to construct interesting maps
 between topological localisations of classifying spaces of compact Lie groups, based on (exceptional) isogenies in positive characteristic. This is one of the
main results we rely on to establish our bijection between perfectly reductive groups
and localised classifying spaces.


\subsection*{Fractal representation theory}
For a (reduced) group scheme defined over $\mathbb{F}_p$, the Frobenius twist
realises its category of representations as a full subcategory of
itself. This self-similarity induces a
fractal-like structure.
For example,
Figure~\ref{sl2pic}, shows a classic picture of the non-zero weight
spaces of simple modules for $SL_2$ in characteristic $3$. (For the
reader unfamiliar with this picture, it may be helpful to note that it
simply depicts the (non)-vanishing behaviour of Pascal's triangle
modulo $p$, see for instance \cite[\S 1]{Wi}.) This picture is fractal
like, but not genuinely fractal: one can `zoom out' but one 
cannot `zoom in' indefinitely since the Frobenius homomorphism is not
an isomorphism. By passing to the perfection, one gets a genuine
fractal. One dream (not realised in the current paper) is to use this 
fractal structure to say something about important open questions in
representation theory like dimensions and characters of simple
modules.

Much of the difficulty in the
representation theory of reductive groups in characteristic $p$ remains after perfecting. We
do observe two interesting simplifications. Firstly, the
complexities of the block decomposition disappear after passage to
the perfection (see Theorem~\ref{ThmBlock}). Secondly, perfect representation theory
appears to provide the correct setting for { the generic
  cohomology} results of \cite{CPSV}: for
a perfectly reductive group, extensions computed inside algebraic
representations agree with extensions computed as abstract groups (see
Theorem~\ref{ThmGen}). { (That perfect group schemes
  provide the correct setting for generic cohomology was suggested by
  Donkin in the early 1980s, and proved by Wang \cite{Wang}.)}

\begin{figure}[H]
\input{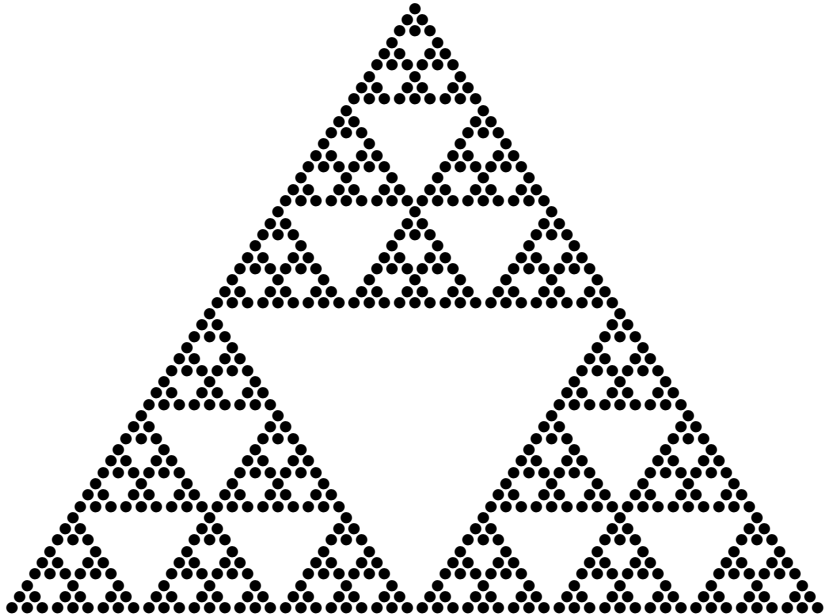}
\caption{Characters of simple modules for $SL_2$ in characteristic $p = 3$.}
\label{sl2pic}
\end{figure}

\subsection*{Tensor categories in characteristic $p$} Over fields of
characteristic zero, a famous theorem of Deligne classifies those
tensor categories which admit a fibre functor to super vector spaces
as precisely those of moderate growth \cite{Del}. It is a fascinating
open problem to find an analogue of this {  theorem} in characteristic
$p$, with many potential applications to modular representation
theory. Recently, this problem was solved in \cite{CEO} for tensor
categories with exact Frobenius functor. An important technical tool
was a limit procedure in \cite[\S 6]{CEO} which, by restriction to
representation (tensor) categories of affine group schemes,
generalises perfection of group schemes. Remarkably, the `perfection' of
a Frobenius exact tensor category of moderate growth essentially
returns the representation category of a perfect group scheme. In
other words, up to perfection, all Frobenius exact tensor categories
of moderate growth arise from (perfect) group schemes. We consider this as further evidence for
their importance.

\subsection*{Structure of the paper}
Motivated by the above considerations, we initiate a systematic study of perfecting group schemes. The paper is organised as follows:

In Section~\ref{SecData} we investigate the purely combinatorial problem of describing root data over the ring $\mZ[1/p]$, for later use in our classification results. In Section~\ref{GenPerf}, we derive some general results on perfect schemes. In Section~\ref{PerfGrp} we start studying the perfections of group schemes. We study perfect subgroups of perfect groups and their quotients. We also obtain criteria for when two group schemes perfect to isomorphic groups and derive some results on the perfections of the additive and multiplicative groups. In Section~\ref{SecRed} we classify perfectly reductive groups by their $\mZ[1/p]$-root data. In Section~\ref{SecRep} we study the representation theory of perfectly reductive groups. We classify simple modules and realise them as socles of induced modules from Borel subgroups. Then we show that the block decomposition simplifies considerably compared to the non-perfect case; in fact blocks are governed by the root lattice. We also show that extension groups for the perfected groups are given in terms of generic cohomology in the sense of \cite{CPSV}. This actually implies that extensions in the category of (rational) representations over the perfected reductive group can be computed in the category of representations of the abstract group of $\overline{\mF}_p$-points. In Section~\ref{SecBG} we prove that $\mZ[1/p]$-root data also classify the localisations away from $p$ of the classifying spaces of compact connected Lie groups. Finally, in Section~\ref{SecSL2}, we present some explicit computations for extension groups, decomposition multiplicities and line bundle cohomology for perfected $SL_2$. We also make explicit the fractal behaviour of perfected representation theory for $SL_2$.

\section{Root data over rings}\label{SecData}
For the entire section, we assume that $D$ is a {\em principal ideal
  domain of characteristic $0$}. By a $D$-lattice we understand
a finitely generated free $D$-module. Because $D$ is a PID we
  could replace `free' by `projective', so our definition agrees
  with standard terminology, e.g. in \cite{CR}. For a lattice $V$, we have the dual lattice $V^\ast:=\Hom_D(V,D)$.

\subsection{Reflection groups and root data}

We follow the definition of root data of for instance \cite{Gr}.
\begin{definition}\label{DefR}
\begin{enumerate}
\item[(0)] A {\bf reflection} $\sigma\in \Aut_D(V)$, for a $D$-lattice $V$, is a non-trivial automorphism that fixes {every element of} a submodule $V'\subset V$ for which $V/V'$ is free of rank 1.
\item A {\bf $D$-reflection group} is a pair $(W,V)$, where $V$ is a $D$-lattice and $W<\Aut_D(V)$ is a subgroup generated by reflections. We say $(W,V)$ is {\bf finite} if $W$ is a finite group.
\item A {\bf $D$-root datum} is a triple $(W,V,\{P_\sigma\})$ where $(W,V)$ is a finite $D$-reflection group and $\{P_\sigma\}$ is a collection of rank one submodules of $V$, indexed by the set $\{\sigma\}$ of reflections in $W$, satisfying 
\begin{enumerate}
\item $\im(1-\sigma)\subset P_\sigma$ and
\item $w(P_\sigma)=P_{w\sigma w^{-1}}$ for all $w\in W$.
\end{enumerate}
\end{enumerate}
\end{definition}

An isomorphism of $D$-reflection groups $(W,V)\xrightarrow{\sim}(W',V')$ is an isomorphism $\varphi:V\to V'$ with $W'=\varphi W\varphi^{-1}$.
An isomorphism of $D$-root data $(W,V,\{P_\sigma\})\xrightarrow{\sim}(W',V',\{P'_\sigma\})$ is such an isomorphism $\varphi:V\to V'$ satisfying {additionally} $\varphi(P_\sigma) = P'_{\varphi \sigma \varphi^{-1}}$ for each reflection $\sigma\in W$.

\subsubsection{}\label{DefExt1} 
A $\mZ$-reflection group $(W,V)$ yields a $D$-reflection group $D\otimes(W,V):= (W,D\otimes V)$ via extension of scalars.
Similarly, for a root datum $(W,V,\{P_\sigma\})$ over $\mZ$, we have the $D$-root datum $D\otimes (W,V,\{P_\sigma\}):=(W,D\otimes V,\{DP_\sigma\})$.

\begin{lemma}\label{LemQ}
If there exists an embedding $D\hookrightarrow \mQ$, then every $D$-root datum is the extension of scalars of a $\mZ$-root datum.
\end{lemma}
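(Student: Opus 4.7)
The plan is to realise the $D$-root datum as the scalar extension of a $\mZ$-root datum obtained by intersecting with a suitable $W$-stable $\mZ$-lattice inside $V$. Since $D\hookrightarrow\mQ$, we may embed $V$ in $V_\mQ:=V\otimes_D\mQ$; every finitely generated torsion-free subgroup of $V$ is then automatically $\mZ$-free, and all constructions can be carried out concretely inside $V$.

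First I would choose a $D$-basis $e_1,\ldots,e_n$ of $V$ and, for each of the finitely many reflections $\sigma\in W$, a $D$-generator $p_\sigma$ of the rank-one module $P_\sigma$. Then let $L\subset V$ be the $\mZ$-span of the finite and $W$-stable set $\{we_i\}_{w,i}\cup\{wp_\sigma\}_{w,\sigma}$. This $L$ is $W$-stable, finitely generated and torsion-free, hence $\mZ$-free, and the multiplication map $D\otimes_\mZ L\to V$ is an isomorphism: surjective because $L$ contains a $D$-basis of $V$, and injective because a $\mZ$-basis of $L$ remains $\mQ$-linearly independent in $V_\mQ$. For each reflection $\sigma\in W$, the fixed subgroup $L^\sigma$ is pure in $L$ (since $V$ is torsion-free), so $L/L^\sigma$ is $\mZ$-free; tensoring the sequence $0\to L^\sigma\to L\xrightarrow{1-\sigma}L$ with $\mQ$ identifies $L^\sigma\otimes\mQ$ with $V_\mQ^\sigma$, of $\mQ$-dimension $n-1$ since $V/V^\sigma\cong D$. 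Hence $L/L^\sigma$ is free of rank one, so each $\sigma$ is a reflection of $L$ and $(W,L)$ is a $\mZ$-reflection group. Setting $P_{\sigma,0}:=L\cap P_\sigma$ produces a pure rank-one subgroup of $L$ containing $p_\sigma$, and the two root-datum axioms $\im(1-\sigma)\subseteq P_{\sigma,0}$ and $w(P_{\sigma,0})=P_{w\sigma w^{-1},0}$ follow directly from their counterparts on $V$.

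I expect the only genuine obstacle to be verifying $D\cdot P_{\sigma,0}=P_\sigma$, which is precisely the reason for building the $p_\sigma$ into $L$: without them $L\cap P_\sigma$ could be a proper $\mZ$-sublattice whose $D$-span is a strict submodule of $P_\sigma$. With the chosen generators in hand, $p_\sigma\in P_{\sigma,0}$ gives $D\cdot P_{\sigma,0}\supseteq D\cdot p_\sigma=P_\sigma$, and combined with the isomorphism $D\otimes_\mZ L\cong V$ this exhibits $(W,V,\{P_\sigma\})$ as the scalar extension of the $\mZ$-root datum $(W,L,\{P_{\sigma,0}\})$.
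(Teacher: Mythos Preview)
Your proof is correct and takes essentially the same approach as the paper: construct a $W$-stable full $\mZ$-lattice in $V$ and set $P_{\sigma,0}=L\cap P_\sigma$. Two small remarks: the claim that $P_{\sigma,0}$ is \emph{pure} in $L$ can fail (take $V=D$, $\sigma=-1$, $P_\sigma=2D$ with $2\notin D^\times$, so $L=\mZ$ and $L/P_{\sigma,0}\cong\mZ/2\mZ$), but purity is never used---you only need $P_{\sigma,0}$ to be free of rank one, which is automatic; and your precaution of adjoining the generators $p_\sigma$ to $L$ is in fact unnecessary, since $D=S^{-1}\mZ$ is a localization and for any $D$-submodule $P\subset V=S^{-1}V^0$ one always has $D\cdot(P\cap V^0)=P$ (given $x\in P$ write $x=v/s$ with $v\in V^0$, $s\in S$, so $sx\in P\cap V^0$ and $x=s^{-1}(sx)$).
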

\begin{proof} 
Let $(W,V)$ be a $D$-reflection group. Starting from a $\mZ$-lattice in $V$ and acting on it with $W$ shows there exists a finitely generated $\mZ W $-submodule $V^0\subset V$ with $D\otimes_{\mZ}V^0\to V$ an isomorphism, see \cite[Corollary~23.14]{CR}. 
For a $D$-root datum $(W,V,\{P_\sigma\})$, we can then take the $\mZ$-root datum $(W,V^0,\{P^0_\sigma\})$, with $P^0_\sigma:=P_\sigma\cap V^0$.
\end{proof}

\begin{remark}
  ${}$
\begin{enumerate}
\item
  Lemma~\ref{LemQ} does not imply that root data over $D\subset\mQ$ are `the same' as root data over $\mZ$, see Example~\ref{ExampRoot}.
 \item The condition $D\subset\mQ$ is necessary in Lemma~\ref{LemQ}, as one observes by considering dihedral groups as real reflection groups on $\mR^2$, or the complex reflection groups generated by a root of unity acting by multiplication on $\mC$.
 \end{enumerate}
\end{remark}

We will use several times the following direct computations.

\begin{lemma}\label{LemComputation}
Consider a $D$-lattice $V$.
\begin{enumerate}
\item For a fixed $\phi\in V^\ast$ and $\kappa_1,\kappa_2\in V$ with $\phi(\kappa_1)=2=\phi(\kappa_2)$, we have the reflections $s_i:\lambda\mapsto\lambda- \phi(\lambda)\kappa_i$ of order $2$ on $V$. Then $$(s_1s_2)^j (\kappa_1)=\kappa_1+2j(\kappa_1-\kappa_2 ),\qquad\mbox{for all $j\in\mN$}.$$
\item { For a fixed $\kappa\in V$ and $\phi_1,\phi_2\in V^\ast$ with $\phi_1(\kappa)=2=\phi_2(\kappa)$, we have the reflections $s_i:\lambda\mapsto\lambda- \phi_i(\lambda)\kappa$ of order $2$ on $V$. Then, for all $\lambda\in V$,
$$(s_1s_2)^j (\lambda)=\lambda+j(\phi_2(\lambda)-\phi_1(\lambda))\kappa,\qquad\mbox{for all $j\in\mN$}.$$}
\end{enumerate} 
\end{lemma}

\begin{lemma}\label{Lemp2}
Consider a prime $p$ and two $\mZ$-root data $(W,V,\{P_\sigma\})$ and $(W',V',\{P_{\sigma'}'\})$. Assume there exists an isomorphism $\varphi:\mZ[1/p]\otimes(W,V)\xrightarrow{\sim}\mZ[1/p]\otimes (W',V')$ of $\mZ[1/p]$-reflection groups.
\begin{enumerate}
\item If $p=2$, then $\varphi$ is actually an isomorphism of $\mZ[1/p]$-root data.
\item If $p>2$ and if the further extension along $\mZ[1/p]\to\mZ_2=\varprojlim \mZ/{2^n}$ of $\varphi$ induces an isomorphism of $\mZ_2$-root data, then $\varphi$ is an isomorphism of $\mZ[1/p]$-root data.
\end{enumerate} 
\end{lemma}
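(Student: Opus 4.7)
The plan is to first establish a sharp divisibility estimate that pins down $P_\sigma$. Fix a reflection $\sigma\in W$ of the $\mZ$-root datum $(W,V,\{P_\sigma\})$, let $\kappa\in V$ generate $\im(1-\sigma)$ (unique up to sign), and let $\phi\in V^\ast$ be the unique primitive linear form with $\sigma(v)=v-\phi(v)\kappa$. From $\sigma^2=1$ one obtains $\phi(\kappa)=2$. Hence, for any positive integer $m$ with $\kappa/m\in V$, we get $\phi(\kappa/m)=2/m\in\mZ$, forcing $m\in\{1,2\}$. It follows that the rank-one $\mZ$-submodules of $V$ containing $\im(1-\sigma)=\mZ\kappa$ are precisely $\mZ\kappa$ and, in case $\kappa/2\in V$, also $\mZ\cdot\kappa/2$; thus $P_\sigma$ is one of at most two candidates. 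The analogous analysis applies to $(W',V',\{P'_{\sigma'}\})$.

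For part (1), these two candidates coincide after inverting $2$: since $2\in\mZ[1/2]^\times$, we have $\mZ[1/2]\kappa=\mZ[1/2]\cdot\kappa/2$. Consequently $P_\sigma\otimes\mZ[1/2]$ is uniquely determined as the unique rank-one $\mZ[1/2]$-submodule of $V_{\mZ[1/2]}$ containing $(1-\sigma)V_{\mZ[1/2]}$, an invariant of $\sigma$ (and $V_{\mZ[1/2]}$) alone. The same holds for $P'_{\sigma'}\otimes\mZ[1/2]$ with $\sigma'=\varphi\sigma\varphi^{-1}$. Since $\varphi$ respects reflections and maps $(1-\sigma)V_{\mZ[1/2]}$ to $(1-\sigma')V'_{\mZ[1/2]}$, this uniqueness forces $\varphi(P_{\sigma,\mZ[1/2]})=P'_{\sigma',\mZ[1/2]}$.

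For part (2), with $p>2$, the two candidates $\mZ[1/p]\kappa$ and $\mZ[1/p]\cdot\kappa/2$ are now genuinely distinct rank-one $\mZ[1/p]$-submodules (differing by a factor of $2$), and they remain distinct after extending scalars to $\mZ_2$ (which makes sense since $p\in\mZ_2^\times$), because $2\notin\mZ_2^\times$. The hypothesis that $\varphi_{\mZ_2}$ is an isomorphism of $\mZ_2$-root data gives $\varphi(P_\sigma)\otimes\mZ_2=P'_{\sigma'}\otimes\mZ_2$ inside $V'_{\mZ_2}$. Were $\varphi(P_{\sigma,\mZ[1/p]})$ and $P'_{\sigma',\mZ[1/p]}$ to disagree, they would differ by a factor of $2$, and this discrepancy would survive in $\mZ_2$, contradicting $\mZ_2$-compatibility. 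Hence $\varphi(P_{\sigma,\mZ[1/p]})=P'_{\sigma',\mZ[1/p]}$.

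The main obstacle is isolating the divisibility bound $m\in\{1,2\}$ at the very start, which singles out the prime $2$. Once this is in hand, the remainder reduces to formal localisation bookkeeping: in (1), $\mZ[1/2]$ collapses both candidates into one; in (2), the role of $\mZ_2$ is precisely to detect the residual factor-of-$2$ ambiguity that $\mZ[1/p]$ (for odd $p$) cannot resolve on its own.
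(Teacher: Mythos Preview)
Your proof is correct and follows essentially the same approach as the paper: the key input is the divisibility bound showing $P_\sigma$ equals $\im(1-\sigma)$ or $\tfrac{1}{2}\im(1-\sigma)$, after which part~(1) follows since the two candidates coincide over $\mZ[1/2]$, and part~(2) follows since a factor-of-$2$ discrepancy would persist in $\mZ_2$. The paper states the divisibility bound as ``well known and easy to show'' and then argues identically; your version supplies the short proof of that bound explicitly (the word ``primitive'' for $\phi$ is unnecessary---$\phi$ is already uniquely determined by the displayed equation once $\kappa$ is fixed---but this does not affect the argument).
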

\begin{proof}
It is well known and easy to show that for $\mZ$-root data, we either have 
\begin{equation}\label{2P}P_\sigma=\im(1-\sigma)\quad\mbox{or}\quad \im(1-\sigma)=2P_\sigma.
\end{equation} 
Hence the additional condition in the definition of an isomorphism of $\mZ[1/2]$-root data is trivially satisfied.

Now assume $p>2$. In this case, \eqref{2P} shows that in $\mZ[1/p]\otimes V'$ we have either $\varphi(P_\sigma)=P'_{\varphi\sigma \varphi^{-1}}$,  $\varphi(P_\sigma)=2P'_{\varphi\sigma \varphi^{-1}}$, or  $2\varphi(P_\sigma)=P'_{\varphi\sigma \varphi^{-1}}$. By assumption, after extension to scalars to $\mZ_2$, only the first option is possible. As $2$ is not invertible in $\mZ_2$, this means that also over $\mZ[1/p]$ only the first option was possible.
\end{proof}

\subsection{Real-type root data}

The following definition is closer to the classical definition of (reduced) root data. 

\begin{definition}\label{DefXY}
A {\bf real-type $D$-root datum} is a quadruple $(\bX,\bR,\bY,\bR^\vee)$, where $\bX$ and $\bY$ are $D$-lattices with subsets $\bR\subset \bX$ and $\bR^\vee\subset \bY$, together with
\begin{enumerate}
\item[(a)] a perfect bilinear pairing $\langle\cdot,\cdot\rangle: \bX\times\bY\to D$;
\item[(b)] a bijection $\bR\to\bR^\vee$, $\beta\mapsto \beta^\vee$;
\end{enumerate}
such that:
\begin{enumerate}
\item We have $\langle \alpha,\alpha^\vee\rangle=2$ for all $\alpha\in \bR$.
\item If $\alpha^\vee\in \bR^\vee$ and $a\in D$, then $a\alpha^\vee\in \bR^\vee$ if and only if $a\in D^\times$.
\item There are only finitely many $D^\times$-orbits in $\bR^\vee$.
\item For each $\alpha\in \bR$, the reflection $s_\alpha:\lambda\mapsto \lambda-\langle\alpha, \lambda\rangle \alpha^\vee$ in $\Aut_{D}(\bY)$ preserves $\bR^\vee$.
\item[(4')] For each $\alpha\in \bR$, the reflection $s_\alpha:\lambda\mapsto \lambda-\langle \lambda,\alpha^\vee\rangle \alpha$ in $\Aut_{D}(\bX)$ preserves $\bR$.
\end{enumerate}
\end{definition}

 {Note that $\langle s_\alpha(\lambda),s_\alpha(\mu)\rangle=\langle \lambda,\mu\rangle$, for $\lambda\in \bX$ and $\mu\in\bY$, with $s_\alpha$ as defined in (4) and~(4').}

To a real-type root datum $(X,R,Y,R^\vee)$ over $\mZ$ we can define a real-type $D$-root datum $(D\otimes X,D^\times R,D\otimes Y,D^\times R^\vee)$, with obvious bilinear pairing and bijection $D^\times R\to D^\times R^\vee$ given by $a\lambda\mapsto a^{-1}\lambda^\vee$.

\begin{remark}\label{remprime}
For $D=\mZ$, Definition~\ref{DefXY} is equivalent to the definition of a `donn\'ee radicielle r\'eduite' in \cite[\S 3.6]{De}.
We will show below {in Lemma \ref{LemBeta1}} that a real-type $D$-root datum $(\bX,\bR,\bY,\bR^\vee)$ also satisfies:
\begin{enumerate}
\item[(2')] If $\alpha\in \bR$ and $a\in D$, then $a\alpha\in \bR$ if and only if $a\in D^\times$.
\item[(3')] There are only finitely many $D^\times$-orbits in $\bR$.
\end{enumerate}
In particular, the definition of real-type root data is
closed under duality.
\end{remark}

\begin{example}\label{ExampleData}
Consider a $D$-root datum $(W,V,\{P_\sigma\})$. Take a reflection $\sigma\in W$ and a generator $v\in P_\sigma$. By condition \ref{DefR}(2)(a), there exists (a unique) $\beta\in V^\ast$ such that
\begin{equation}\label{sigmabeta}\sigma(\lambda)\;=\; \lambda-\beta(\lambda) v,\quad\mbox{for all $\lambda\in V$}.\end{equation}
We then define $\bR^\vee\subset V$ as the set of generators of the submodules $\{P_\sigma\}$ and $\bR\subset V^\ast$ as the set of elements $\beta$ constructed by the above procedure. {  If we denote by $S\subset W$ the set of reflections, then this procedure yields a surjective partially defined function
\begin{equation}\label{partfun}
S\times \bR^\vee\; \rightharpoonup \; \bR,\quad (\sigma, v)\mapsto \beta.
\end{equation}}
 \end{example}

\begin{theorem}\label{ThmData0}
Assume that for the $D$-root datum $(W,V,\{P_\sigma\})$, every reflection in $W$ has order $2$. { Then \eqref{partfun} restricts to a bijection $\bR^\vee\to\bR$.}
The quadruple $(V^\ast, \bR,V,\bR^\vee)$ equipped with inverse bijection $\bR\to\bR^\vee$  and evaluation pairing $V^\ast\times V\to D$, is a real-type $D$-root datum.
\end{theorem}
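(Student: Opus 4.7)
The plan is to verify the six conditions of Definition~\ref{DefXY} in turn, using Lemma~\ref{LemComputation} as the main computational tool. I begin by imposing $\sigma^2=\id$ on the defining identity $\sigma(\lambda)=\lambda-\beta(\lambda)b$: expanding gives $\sigma^2(\lambda)=\lambda-\beta(\lambda)(2-\beta(b))b$, so $\beta\neq 0$ forces $\beta(b)=2$. This immediately yields axiom~(1) and will be the foundation of what follows.

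The heart of the argument is establishing the bijection $\bR^\vee\to\bR$. I would first show that $\sigma\mapsto P_\sigma$ is injective on the set of reflections: if two distinct reflections $\sigma,\sigma'$ shared $P_\sigma=P_{\sigma'}=Db$ with associated covectors $\beta,\beta'$, a direct calculation yields $\sigma\sigma'(\lambda)=\lambda+(\beta'-\beta)(\lambda)b$, and a straightforward induction using $(\beta'-\beta)(b)=0$ gives $(\sigma\sigma')^j(\lambda)=\lambda+j(\beta'-\beta)(\lambda)b$; since $W$ is finite and $\mathrm{char}\,D=0$, this forces $\beta'=\beta$ and hence $\sigma'=\sigma$. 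Injectivity of $b\mapsto\beta$ then breaks into two cases. For distinct generators of a single $P_\sigma$, the relation $b'=ub$ with $u\in D^\times\setminus\{1\}$ yields $\beta'=u^{-1}\beta\neq\beta$. For generators of distinct $P_\sigma\neq P_{\sigma'}$, the assumption $\beta=\beta'$ allows me to apply Lemma~\ref{LemComputation} with $\phi=\beta$ and $(\kappa_1,\kappa_2)=(b,b')$: the identity $(\sigma\sigma')^j(b)=b+2j(b-b')$ combined with finiteness of $W$ and torsion-freeness of $V$ forces $b=b'$. Surjectivity of $\bR^\vee\to\bR$ is built into the construction.

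The remaining axioms follow with less effort. For~(2), if $ab\in\bR^\vee$ with $b=\alpha^\vee$ and $a\in D$ nonzero, write $\sigma'(\lambda)=\lambda-\beta'(\lambda)(ab)$ and run the same $\sigma\sigma'$-computation: it yields $\sigma\sigma'(\lambda)=\lambda+\gamma(\lambda)b$ with $\gamma=a\beta'-\beta$ and $\gamma(b)=0$, so finiteness of $W$ forces $\gamma=0$, whence $\sigma'=\sigma$ and $a\in D^\times$. Axiom~(3) is immediate since $W$ has only finitely many reflections and each supplies exactly one $D^\times$-orbit in $\bR^\vee$. For~(4), observe that $s_\alpha$ acting on $V$ is literally $\sigma$, and condition~\ref{DefR}(2)(b) gives $\sigma(P_{\sigma'})=P_{\sigma\sigma'\sigma^{-1}}$, so $\sigma$ permutes the generators and preserves $\bR^\vee$. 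For~(4'), a short computation identifies $s_\alpha$ on $V^\ast$ with the contragredient action $\phi\mapsto\phi\circ\sigma$, and one checks that this action sends $\beta'\in\bR$ (corresponding to $(\sigma',b')$) to the covector associated to $(\sigma\sigma'\sigma^{-1},\sigma(b'))$, which lies in $\bR$. The main obstacle is the injectivity of $\sigma\mapsto P_\sigma$: it is the crucial input where the order-$2$ hypothesis together with finiteness of $W$ and characteristic zero are all used simultaneously; once it is in hand, everything else is routine application of Lemma~\ref{LemComputation} and bookkeeping with the axioms of Definition~\ref{DefR}.
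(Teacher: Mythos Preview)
Your proof is correct and follows the same overall strategy as the paper: deduce $\beta(b)=2$ from $\sigma^2=\id$, use Lemma~\ref{LemComputation} together with finiteness of $W$ and $\mathrm{char}\,D=0$ to establish the bijection, and read off axioms~(3), (4), (4') from Definition~\ref{DefR}(2)(b) and its dual. There are two small differences in execution. For axiom~(2), rather than iterating $\sigma\sigma'$ as you do, the paper passes to the fraction field $K$ and notes that if $P_{\sigma_1}\subset P_{\sigma_2}$ then $\sigma_1\sigma_2$ acts trivially on both $KP_{\sigma_1}$ and $KV/KP_{\sigma_1}$, hence is unipotent of finite order in characteristic zero, hence the identity. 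For~(4'), the paper packages your direct computation by observing that $(W,V^\ast,\{D\beta_\sigma\})$ is again a $D$-root datum and then invoking the argument for~(4). Your explicit verification that $\sigma\mapsto P_\sigma$ is injective (needed for the map $\bR^\vee\to\bR$ to be well-defined as a function) is a point the paper leaves implicit until its proof of~(2).
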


\begin{proof}
{ 
The assumption $\sigma^2=1$ in \eqref{sigmabeta} shows that $\beta(v)=2$, for every pair $(v,\beta)$ we associate to a reflection in $W$ via the procedure in \ref{ExampleData}. First we observe that this procedure yields a well-defined function $\bR^\vee\to\bR$, namely that $\beta$ only depends on $v$ and not on~$\sigma$. This is indeed the case since a given $v\in V$ cannot be a generator in both $P_{\sigma_1}$ and $P_{\sigma_2}$ for two distinct reflections $\sigma_1,\sigma_2\in W$, which follows from finiteness of $W$, Lemma~\ref{LemComputation}(2) and the fact that $D$ is of
  characteristic 0.
By definition of $\bR$, the function $\bR^\vee\to\bR$ is surjective. Next we prove that this function is injective, and hence a bijection. Assume therefore that for two generators $v_1\in P_{\sigma_1}$ and $v_2\in P_{\sigma_2}$, we obtain the same $\beta\in V^\ast$.
By finiteness of $W$ and
  Lemma~\ref{LemComputation}(1) we find $v_1=v_2$.}

By the above paragraph,  \ref{DefXY}(1) is satisfied.
To establish \ref{DefXY}(2) it suffices to show that we cannot have non-trivial inclusions $P_{\sigma_1}\subset P_{\sigma_2}$. We can extend scalars along $D\hookrightarrow K$, with $K$ the field of fractions. Now $\sigma_1\sigma_2$ acts as the identity on $KP_{\sigma_1}=KP_{\sigma_2}$, but also as the identity on $KV/KP_{\sigma_1}$. Since it has finite order and char$K=0$, we find $\sigma_1\sigma_2=1$.
Property \ref{DefXY}(3) follows immediately from the fact that $W$ is finite.
Property \ref{DefXY}(4) is an immediate consequence of \ref{DefR}(2)(b).

By letting $w\in W$ act on $V^\ast$ by $w(f)=f\circ w^{-1}$, we also have a reflection group $(W,V^\ast)$. We can define $Q_\sigma:=D\beta\subset V^\ast$, for each reflection $\sigma\in W$ with $\beta$ as in \eqref{sigmabeta}, since $Q_\sigma$ does not depend on our choice of generator $v\in P_\sigma$. It follows immediately that $(W,V^\ast, \{Q_\sigma\})$ is a $D$-root datum. That \ref{DefXY}(4') is satisfied follows by applying the proof for (4) to $(W,V^\ast, \{Q_\sigma\})$.
\end{proof}

We conclude this section with some technical results needed later.

\begin{lemma}\label{LemBeta1}
Consider a real-type $D$-root datum $(\bX,\bR,\bY,\bR^\vee)$. For $\beta\in\bR$ and $b\in D$, we have $b\beta\in \bR$ if and only if $b\in D^\times$ and then $(b\beta)^\vee=b^{-1}\beta^\vee$.
In particular, $s_{b\beta}=s_\beta$ and conditions (2') and (3') in \ref{remprime} hold.
\end{lemma}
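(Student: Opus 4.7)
The plan is to prove the biconditional $b\beta\in\bR\iff b\in D^\times$ together with the formula $(b\beta)^\vee=b^{-1}\beta^\vee$ through two transvection arguments that both exploit axiom (3) of Definition~\ref{DefXY}.

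First I would handle the implication $b\beta\in\bR\Rightarrow b\in D^\times$ and the coroot formula. Set $\gamma:=b\beta\in\bR$ and $\phi:=\beta^\vee-b\gamma^\vee\in\bY$. A direct computation (using $\langle\gamma,\gamma^\vee\rangle=2$, which forces $\langle\gamma,\beta^\vee\rangle=2b$) shows that $T:=s_\gamma s_\beta$ acts on $\bY$ as the transvection $T(\mu)=\mu-\langle\beta,\mu\rangle\phi$ with $\langle\beta,\phi\rangle=0$, so $T^n(\beta^\vee)=\beta^\vee-2n\phi\in\bR^\vee$ for every $n$. If $\phi$ and $\beta^\vee$ were $K$-linearly independent (with $K$ the fraction field of $D$), these vectors would lie in pairwise distinct $D^\times$-orbits, contradicting (3); hence $\phi=\lambda\beta^\vee$ for some $\lambda\in K$. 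Substituting $\gamma^\vee=b^{-1}(1-\lambda)\beta^\vee$ into $\langle\gamma,\gamma^\vee\rangle=2$ forces $\lambda=0$, so $\phi=0$ and $\beta^\vee=b\gamma^\vee$; axiom (2) applied to $\gamma^\vee$ then delivers $b\in D^\times$ and $\gamma^\vee=b^{-1}\beta^\vee$.

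For the converse I would fix $b\in D^\times$, use (2) to place $b^{-1}\beta^\vee$ in $\bR^\vee$, and invoke the bijection to obtain a unique $\gamma\in\bR$ with $\gamma^\vee=b^{-1}\beta^\vee$; the task reduces to proving that $\psi:=b^{-1}\gamma-\beta$ vanishes. The analogous computation shows that $T':=s_\gamma s_\beta$ acts on $\bY$ as $T'(\mu)=\mu-\langle\psi,\mu\rangle\beta^\vee$ with $\langle\psi,\beta^\vee\rangle=0$. For any $\alpha^\vee\in\bR^\vee\setminus D^\times\beta^\vee$, axiom (2) ensures $K$-linear independence of $\alpha^\vee$ and $\beta^\vee$; iterating $T'$ on $\alpha^\vee$ and using (3) forces $\langle\psi,\alpha^\vee\rangle=0$, so $\psi\perp\bR^\vee$. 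To upgrade this to $\psi=0$, I would dualise: on $\bX$, $T'(\lambda)=\lambda+\langle\lambda,\beta^\vee\rangle\psi$, so $T'^n(\beta)=\beta+2n\psi$ are pairwise distinct in $\bR$ whenever $\psi\neq 0$, with pairwise distinct coroots $\eta_n\in\bR^\vee$. If some $\eta_n$ with $n\geq 1$ were in $D^\times\beta^\vee$, writing $\eta_n=c\beta^\vee$ and using $\langle\psi,\beta^\vee\rangle=0$ in $\langle T'^n(\beta),\eta_n\rangle=2$ would give $c=1$, hence $\eta_n=\beta^\vee$ and $T'^n(\beta)=\beta$ by injectivity, forcing $\psi=0$. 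Assuming $\psi\neq 0$, all $\eta_n$ ($n\geq 1$) therefore lie outside $D^\times\beta^\vee$; pigeonholing via (3) yields an infinite $S\subset\mN_{\geq 1}$ and $\xi_0\in\bR^\vee\setminus D^\times\beta^\vee$ with $\eta_n=c_n\xi_0$, $c_n\in D^\times$, for $n\in S$. Then $\langle T'^n(\beta),\eta_n\rangle=2$ combined with $\langle\psi,\xi_0\rangle=0$ collapses to $c_n\langle\beta,\xi_0\rangle=2$, forcing $c_n$ (and hence $\eta_n$) constant on $S$ --- contradicting distinctness. So $\psi=0$ and $b\beta\in\bR$.

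Finally the stated consequences are immediate: (2') is the biconditional just proved, $s_{b\beta}(\lambda)=\lambda-\langle\lambda,b^{-1}\beta^\vee\rangle b\beta=s_\beta(\lambda)$ gives $s_{b\beta}=s_\beta$, and the map $c\beta\mapsto c^{-1}\beta^\vee$ is a $D^\times$-equivariant bijection of $D^\times$-orbits carrying the finiteness in (3) to (3'). The central obstacle is the converse direction: the first transvection argument only yields $\psi\perp\bR^\vee$, which is strictly weaker than $\psi=0$ when $\bR^\vee$ fails to span $\bY$ over $K$, and it is the dual argument --- applied to coroots of the $T'$-iterates of $\beta$ rather than to the iterates themselves --- that finally eliminates $\psi$.
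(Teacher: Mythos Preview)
Your forward implication ($b\beta\in\bR\Rightarrow b\in D^\times$ together with $(b\beta)^\vee=b^{-1}\beta^\vee$) is exactly the paper's argument: both apply Lemma~\ref{LemComputation} to the transvection $s_{b\beta}s_\beta$ acting on $\bY$ and then invoke axioms~(3), (4) and~(2) of Definition~\ref{DefXY}. Your treatment is simply more explicit about the linear-independence dichotomy.

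The paper's proof, however, stops there and gives no argument whatsoever for the converse $b\in D^\times\Rightarrow b\beta\in\bR$, even though (2') and (3') in Remark~\ref{remprime} require it. Your two-stage argument --- first iterating $T'=s_\gamma s_\beta$ on $\bR^\vee$ to force $\psi\perp\bR^\vee$, then dualising to $\bX$, iterating on $\beta$, and pigeonholing the coroots $\eta_n$ of the iterates back through axiom~(3) --- is correct and fills a genuine gap. No shortcut via duality is available here, since the self-duality of Definition~\ref{DefXY} asserted in Remark~\ref{remprime} is itself a \emph{consequence} of the full lemma; and axiom~(3) constrains only $\bR^\vee$, so the passage through the coroots $\eta_n$ is the essential step. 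Your proof is therefore strictly more complete than the paper's.
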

\begin{proof}
 {Assume first that $b\in D^\times$. We need to prove that $(\sigma, b^{-1}\beta^\vee)\mapsto b\beta$ in \eqref{partfun}, for $\sigma:\lambda\mapsto \lambda-\beta(\lambda)\beta^\vee$, which is clearly true.}

Conversely, assume that $\beta_1:=b\beta\in\bR$, for some $b\in D$, and set $\lambda:=b\beta_1^\vee-\beta^\vee\in \bY$. Lemma~\ref{LemComputation}(1) for $\phi=\langle\beta,-\rangle$ and $\kappa_1=b\beta_1^\vee$, $\kappa_2=\beta^\vee$ implies
$$b\,(s_{\beta_1}s_\beta)^j(\beta_1^\vee)\;=\;b\beta_1^\vee+2j\lambda,\quad\mbox{for all $j\in\mN$}.$$
Conditions~\ref{DefXY}(3) and (4) thus imply $\lambda=0$. Finally \ref{DefXY}(2) then implies $b$ is a unit.
\end{proof}

\begin{lemma}\label{LemABC}
Consider a real-type $D$-root datum $(\bX,\bR,\bY,\bR^\vee)$. If, for $\alpha,\beta,\gamma\in\bR$, we have $s_\beta(\alpha^\vee)=\gamma^\vee$, then $s_\beta s_\alpha s_\beta=s_\gamma$ and $s_\beta(\alpha)=\gamma$.
\end{lemma}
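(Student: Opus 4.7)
The plan is to establish $s_\beta(\alpha) = \gamma$ first; the reflection identity $s_\beta s_\alpha s_\beta = s_\gamma$ will then follow from short direct computations on $\bX$ and $\bY$.

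A direct check (using $\langle \beta, \beta^\vee\rangle = 2$) shows that $s_\beta$ preserves the pairing in the sense $\langle s_\beta(\mu), s_\beta(\lambda)\rangle = \langle \mu,\lambda\rangle$. In particular $\langle \alpha',\gamma^\vee\rangle = \langle s_\beta(\alpha), s_\beta(\alpha^\vee)\rangle = \langle \alpha,\alpha^\vee\rangle = 2$, where $\alpha' := s_\beta(\alpha) \in \bR$ (legitimate by (4')). Expanding the action of $s_\beta s_\alpha s_\beta$ on $\bY$, and using pairing invariance to rewrite $\langle \alpha, s_\beta(\lambda)\rangle = \langle \alpha',\lambda\rangle$, I would obtain
\[s_\beta s_\alpha s_\beta(\lambda) \;=\; \lambda - \langle\alpha',\lambda\rangle\, \gamma^\vee,\]
to be compared with $s_{\alpha'}(\lambda) = \lambda - \langle\alpha',\lambda\rangle\,(\alpha')^\vee$. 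Both maps preserve $\bR^\vee$ (by (4)), and they agree on $\bY$ precisely when $(\alpha')^\vee = \gamma^\vee$.

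Applying Lemma~\ref{LemComputation} to $\bY$ with $\phi = \langle\alpha',-\rangle$, $\kappa_1 = (\alpha')^\vee$ and $\kappa_2 = \gamma^\vee$ (both satisfying $\phi(\kappa_i) = 2$) produces an infinite family
\[(\alpha')^\vee + 2j\bigl((\alpha')^\vee - \gamma^\vee\bigr) \;\in\; \bR^\vee, \qquad j \in \mN.\]
Setting $w := (\alpha')^\vee - \gamma^\vee$ and assuming $w \neq 0$ for contradiction, I run a dichotomy. If $w$ and $(\alpha')^\vee$ are $K$-linearly independent (with $K$ the fraction field of $D$), then the displayed vectors are pairwise $K$-independent, hence lie in pairwise distinct $D^\times$-orbits of $\bR^\vee$, contradicting axiom (3). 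If instead $w = c(\alpha')^\vee$ for some $c \in K$, then $\gamma^\vee = (1-c)(\alpha')^\vee$; axiom (2) forces $1-c \in D^\times$, while the pairing constraint $2 = \langle \alpha',\gamma^\vee\rangle = 2(1-c)$ forces $c = 0$, contradicting $w \neq 0$.

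Hence $(\alpha')^\vee = \gamma^\vee$, and the bijection $\bR \leftrightarrow \bR^\vee$ promotes this to $\gamma = \alpha' = s_\beta(\alpha)$. The displayed formulas on $\bY$ now give $s_\beta s_\alpha s_\beta = s_\gamma$ there, and an analogous calculation on $\bX$ (using pairing invariance to reduce $\langle s_\beta(\mu), \alpha^\vee\rangle = \langle\mu, \gamma^\vee\rangle$) yields the identity on $\bX$. The main obstacle is the case analysis in the third paragraph: ruling out the proportional case $w \parallel (\alpha')^\vee$ hinges on combining the primitivity axiom (2) with the pairing value $\langle \alpha',\gamma^\vee\rangle = 2$, while ruling out the $K$-independent case relies on the finiteness of orbits (3); neither argument alone suffices.
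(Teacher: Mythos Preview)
Your proof is correct and follows essentially the same route as the paper: derive the conjugation formula $s_\beta s_\alpha s_\beta(\lambda)=\lambda-\langle s_\beta(\alpha),\lambda\rangle\gamma^\vee$, then apply Lemma~\ref{LemComputation} with $\phi=\langle s_\beta(\alpha),-\rangle$, $\kappa_1=(s_\beta(\alpha))^\vee$, $\kappa_2=\gamma^\vee$ and use axioms (3) and (4) to force $\kappa_1=\kappa_2$. One minor point: in your proportional case the appeal to axiom~(2) is both unnecessary and slightly imprecise (since $1-c$ lies a priori only in the fraction field), but the pairing identity $2=2(1-c)$ already yields $c=0$ on its own.
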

\begin{proof}
A direct calculation shows that, for every $\lambda\in\bY$,
\begin{equation}\label{sss}
s_\beta s_\alpha s_\beta(\lambda)\;=\; \lambda-\langle s_\beta(\alpha),\lambda\rangle s_\beta(\alpha^\vee).
\end{equation}
Set $\gamma_1= s_\beta(\alpha)\in\bR$. Then we can apply Lemma~\ref{LemComputation}(1) to $\phi=\langle\gamma_1,-\rangle$ and $\kappa_1=\gamma_1^\vee$, $\kappa_2 =\gamma^\vee$ (so $s_1=s_{\gamma_1}$ and $s_2=s_\beta s_\alpha s_\beta$). By \ref{DefXY}(3) and (4), it thus follows that $\gamma_1=\gamma$.
\end{proof}

\subsection{Real reflection groups}

\subsubsection{Hypotheses}\label{hypo}
Consider a finite-dimensional real vector space $V$ (without fixed inner product/Euclidean structure), a reflection group $W<\Aut_{\mR}(V)$, in the sense of Definition~\ref{DefR}(1), and a fixed {\em finite} generating set $T$ of reflections in $W$ such that
\begin{itemize}
\item The map from $T$ to the set of hyperplanes in $V$, $s\mapsto H_s:=\ker (1-s)$, is injective.
\item We have $wTw^{-1}\subset T$ for all $w \in W$.
\end{itemize}

 \begin{theorem}\label{ThmWFin}
 Under the assumptions in \ref{hypo}, $W$ is a finite group
 \end{theorem}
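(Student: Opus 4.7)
The plan is to reduce the finiteness of $W$ to Schur's theorem via the conjugation action of $W$ on the finite set $T$. First I would note that the hypothesis $wTw^{-1}\subseteq T$, applied to both $w$ and $w^{-1}$, forces $wTw^{-1}=T$ for every $w\in W$. Conjugation therefore gives a homomorphism $\rho\colon W\to \mathrm{Sym}(T)$ into a finite group. The kernel of $\rho$ consists of those $w$ that commute with every $s\in T$; since $T$ generates $W$, this kernel coincides with the centre $Z(W)$. Hence $W/Z(W)$ embeds into $\mathrm{Sym}(T)$ and is finite.

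Next I would invoke Schur's theorem, which asserts that a group $G$ with $G/Z(G)$ finite has finite commutator subgroup $[G,G]$. Applied to $W$, this immediately yields that $[W,W]$ is finite. To finish, I would analyse the abelianisation $W^{\mathrm{ab}}=W/[W,W]$: it is abelian and generated by the (at most $|T|$) images of reflections in $T$. Since these reflections are involutions, each such generator has order at most $2$, so $W^{\mathrm{ab}}$ is an elementary abelian $2$-group of rank at most $|T|$, hence finite. Combining the finiteness of $[W,W]$ with that of $W^{\mathrm{ab}}$ shows that $W$ itself is finite.

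The main conceptual pivot is the identification $\ker\rho = Z(W)$, which is where the \emph{generating} property of $T$ is used to propagate ``commutes with every $s\in T$'' to ``commutes with all of $W$''. Schur's theorem then converts the evident finiteness of $W/Z(W)$ into genuine finiteness of $W$. The hyperplane injectivity hypothesis of \ref{hypo} does not appear to enter this short route; it serves rather to ensure the combinatorial separation of $T$ needed in the subsequent root-datum constructions.
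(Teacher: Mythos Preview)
Your argument is correct and takes a genuinely different route from the paper's. The paper proceeds geometrically: it builds the hyperplane arrangement $\cH=\bigcup_{s\in T}H_s$, singles out a chamber $A_0$ and the set $S\subset T$ of reflections through its walls, proves that $S$ generates $W$, that $(W,S)$ is a Coxeter system (following Bourbaki), and finally invokes the fact that a Coxeter group with only finitely many reflections is finite. Your approach is purely group-theoretic and noticeably shorter: it bypasses the chamber geometry, Coxeter theory, and the cited result from \cite{BB} entirely. As you correctly observe, your argument does not use the injectivity of $s\mapsto H_s$, whereas the paper does need it (to pass from hyperplanes back to reflections in the proof that $S$ generates). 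What your route does not deliver is the structural byproduct that $(W,S)$ is a Coxeter system; but this is not used elsewhere in the paper, so nothing is lost.

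One point deserves a remark. Your step ``these reflections are involutions'' is not literally forced by Definition~\ref{DefR}(0): over $\mR$ that definition allows a reflection to act on $V/V'$ by any scalar $c\neq 1$, and for instance $V=\mR$, $T=\{x\mapsto 2x\}$ satisfies all the hypotheses of~\ref{hypo} while $W\cong\mZ$. However, the paper's own proof relies on exactly the same implicit assumption: both the chamber-swapping step $t_0(A_1)=A_0$ and the conclusion that $(W,S)$ is a Coxeter system require the generators to have order two. In the sole application (the proof of Theorem~\ref{ThmData}) the reflections $s_\alpha$ are involutions by construction, so both arguments are valid there; you might simply add a clause noting that the elements of $T$ are assumed to be (or, in the application, are) involutions.
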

 
 \begin{remark}\label{RemEuc}
 If a real reflection group $(W,V)$ is finite, by Weyl's unitary trick, we can assume it is Euclidean (meaning there is an inner product on $V$ for which each reflection in~$W$ is orthogonal).
 \end{remark}
 The remainder of this section is devoted to the proof.

 \subsubsection{}

All topological references consider the Euclidean topology on $V$. Consider
$$\cH=\cup_{t\in T}H_t\,\subset\, V$$
and refer to the connected components of the complement of $\cH$ in $V$ as
chambers.  {We say that $H_t$ is a {\bf wall} of a chamber $A$ if the intersection of $\overline{A_0}$ and $H_t$ cannot be contained in a codimension 2 hyperplane.}
Fix one such chamber $A_0$.
Denote by $S\subset T$ the set of reflections $s$ for which $H_s$ is a wall of $A_0$.
Our assumptions in \ref{hypo} imply that $W$ acts on the (finite) set of chambers.

\begin{lemma}
The set $S$ is a set of generators for $W$.
\end{lemma}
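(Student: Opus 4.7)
The plan is to set $W' := \langle S\rangle$ and show $T \subseteq W'$, which suffices because $W = \langle T\rangle$ by assumption. The crux is the following observation: if $C = wA_0$ for some $w \in W'$ and $s \in T$ is such that $\overline{C} \cap H_s$ is not contained in a codimension $2$ hyperplane, then $w^{-1}H_s = H_{w^{-1}sw}$ (using that $w^{-1}sw \in T$ by the conjugation-stability hypothesis in \ref{hypo}), and applying $w^{-1}$ to the ``wall'' condition shows this is a wall of $A_0$; hence $w^{-1}sw \in S$ and $s \in wSw^{-1}\subseteq W'$. Given this, it is enough to prove (i) every chamber has the form $wA_0$ with $w \in W'$, and (ii) every $t \in T$ is the reflection in a wall of some chamber.

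For (i), the main step, I would connect $A_0$ to an arbitrary chamber $C$ by a ``generic'' straight line segment between interior points. Since $T$ is finite, the arrangement $\cH = \bigcup_{s\in T}H_s$ is a finite union of hyperplanes, and by perturbing the endpoint within $C$ one can ensure the segment meets $\cH$ only at interior points of single hyperplanes, avoiding all pairwise intersections $H_s \cap H_{s'}$ with $s \neq s'$. This produces a gallery $A_0 = C_0, C_1,\ldots, C_n = C$ in which consecutive $C_i, C_{i+1}$ are separated by a unique hyperplane $H_{t_i}$ with $t_i \in T$, and near the crossing point $\overline{C_i}\cap H_{t_i}$ contains an open neighbourhood in $H_{t_i}$ — so the wall condition holds. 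Induction on $i$, together with the crux observation, then promotes $C_i = w_i A_0$ to $C_{i+1} = t_i C_i = (t_i w_i) A_0$ with $t_i w_i \in W'$.

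For (ii), given $t \in T$, I pick a point $p \in H_t$ not lying on any other $H_s$; this is possible because each $H_s\cap H_t$ with $s \neq t$ is a proper subspace of $H_t$ and $T$ is finite, so the finite union of these proper subspaces cannot cover $H_t$. A small neighbourhood of $p$ meets only $H_t$ among the hyperplanes of $\cH$, so $p$ lies on the boundary of a chamber $C$ whose closure contains an open neighbourhood of $p$ inside $H_t$. Applying (i) gives $C = wA_0$ with $w \in W'$, and the crux observation yields $t \in W'$.

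The main obstacle is the general-position argument in (i); it is routine once the finiteness of $T$ is exploited, but it is essential that codimension-$2$ crossings are excluded, so that each traversed wall corresponds to exactly one reflection $t_i \in T$ whose chamber-wall relationship with $C_i$ can be transported back to $A_0$.
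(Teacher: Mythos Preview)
Your proof is correct and takes essentially the same approach as the paper: both establish transitivity of $\langle S\rangle$ on chambers via a gallery argument and then deduce $T\subseteq\langle S\rangle$ by conjugating each reflecting hyperplane back to a wall of $A_0$. The paper phrases the first step as showing every $W$-orbit meets $\overline{A_0}$ (by folding the gallery with $t_0\in S$) and defers the second step to \cite[V.\S3.1 Lemma~2(iii)]{Bo}, whereas you carry the word $w_i\in\langle S\rangle$ along the gallery and spell out step~(ii) directly.
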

\begin{proof}
 {Denote by $W_S\subset W$ the subgroup generated by $S\subset T$. We need to show that $W_S=W$, or equivalently $T\subset W_S$.}

First we show that every  {$W_S$-orbit} in $V$ intersects $\overline{A_0}$. By continuity it suffices to show that every $W_S$-orbit in $V\backslash \cH$ intersects $A_0$. For $v\in V\backslash \cH$, there exists a sequence $A_0,A_1,\cdots, A_l$ of distinct chambers where $v\in A_l$, and for each $0\le i<l$ there is $t_i\in T$ such that $H_{t_i}$ is a wall of $\overline{A_i}$ and of $\overline{A_{i+1}}$.

If $l=0$ there is nothing to prove, so assume $l>0$. Then $t_0(A_1)=A_0$, and by assumption $t_0\in S$. 
Now $A_0, A_1'=t_0(A_2),A_2'=t_0(A_3),\cdots A_{l-1}'= t_0(A_l)$ forms a chain of distinct chambers as before and $t_0(v)\in A'_{l-1}$. We can thus perform induction on $l$ to deduce the claim.

 {Now take an arbitrary $t\in T$ and let $A$ be a chamber which has $H_t$ as a wall. By the above, we know there exists $w\in W_S$ with $A=w(A_0)$. This implies that
$$H_t\;=\; w(H_s)\;=\;H_{wsw^{-1}}.$$
By the first hypothesis in \ref{hypo}, this shows that $t=wsw^{-1}\in W_S$.}
\end{proof}


Precisely as in \cite[V.\S 3.2 Theorem~1]{Bo}, we then find the following consequence.
\begin{corollary}\label{CorCox}
The pair $(W,S)$ is a Coxeter system. 
\end{corollary}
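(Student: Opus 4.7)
The plan is to follow the proof of \cite[V.\S 3.2, Theorem~1]{Bo} essentially verbatim. The preceding Lemma supplies one of the inputs---that $S$ generates $W$---and the rest of Bourbaki's argument is purely combinatorial and topological, so the Euclidean hypothesis in \cite{Bo} is never substantively used. More precisely, Bourbaki's proof only needs: each hyperplane $H_s$ divides $V$ into two open convex half-spaces (a real-affine fact, not metric); $W$ permutes the (finite) set of chambers (our hypothesis \ref{hypo}); the map $s\mapsto H_s$ is injective on $T$ (again a part of \ref{hypo}); and the generation statement just proved.

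Concretely, I would introduce in parallel the word-length $\ell_S\colon W\to\mN$ with respect to $S$ and the geometric length $n(w)$ equal to the number of hyperplanes in $\cH$ that separate $A_0$ from $wA_0$. A gallery argument---a minimal chain of chambers from $A_0$ to $wA_0$, exactly as in the proof of the preceding Lemma---identifies $\ell_S(w)=n(w)$. The injectivity of $s\mapsto H_s$ enters here to recover each wall-crossing as a unique reflection in $W$, which is Bourbaki's replacement for the Euclidean ``reflection through a hyperplane'' formula.

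From the equality $\ell_S=n$ one extracts the strong exchange condition: if $w=s_1\cdots s_k$ is a word in $S$ and $\ell_S(ws)<\ell_S(w)$ for some $s\in S$, then some letter can be cancelled. Combining this with the generation result via Matsumoto's theorem (or the deletion condition used by Bourbaki at this point) yields that $(W,S)$ is a Coxeter system.

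The main obstacle is not mathematical but a matter of careful bookkeeping: one has to check, step by step, that each piece of Bourbaki's exposition (adjacency of chambers, the well-definedness of the ``type'' of a wall-crossing, and the behaviour of reflections in $T\setminus S$ under conjugation) really only uses the four inputs listed above and not some tacit Euclidean input. A short remark to that effect would suffice; no step presents a genuine difficulty once one accepts that chambers and walls behave as expected in the absence of an inner product.
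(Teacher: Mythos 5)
Your proposal takes the same approach as the paper: the paper's proof of this corollary consists of the single sentence citing \cite[V.\S 3.2 Theorem~1]{Bo}, and you have simply unpacked what that argument is (galleries, $\ell_S = n$, the exchange/deletion condition) and confirmed that the only places Bourbaki's Euclidean structure could matter are covered by the hypotheses in \ref{hypo} — in particular the injectivity of $s \mapsto H_s$, which the paper itself flags as the substitute for the Euclidean ``hyperplane determines reflection'' fact. No gap.
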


\begin{proof}[Proof of Theorem~\ref{ThmWFin}]
The reflections in the Coxeter group $(W,S)$ are by definition the elements of the set $\cup_{w\in W} wSw^{-1}$, which by assumption is included in $T$ and hence finite.
By \cite[Corollary 1.4.5]{BB} any Coxeter group $(W,S)$ with finitely many reflections is finite.
\end{proof}

\subsection{Equivalence of definitions}

\begin{theorem}\label{ThmData}
If there exists an embedding $D\hookrightarrow \mR$, then the map in Theorem~\ref{ThmData0} is a bijection between the sets of isomorphism classes of $D$-root data and real-type $D$-root data.\end{theorem}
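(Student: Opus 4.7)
My plan is to construct an inverse $G$ to the map $F$ of Theorem~\ref{ThmData0}, verify that its image lies in the $D$-root data, and check that $F\circ G$ and $G\circ F$ are naturally the identity.

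First, observe that under $D \hookrightarrow \mR$ the hypothesis of Theorem~\ref{ThmData0}---that every reflection in $W$ has order $2$---is automatic: extension of scalars turns $(W,V)$ into a finite $\mR$-reflection group on $V \otimes_D \mR$, and the non-trivial scalar by which a finite-order reflection acts on its rank-one coinvariants is an element of $\mR^\times$ of finite order, hence equal to $-1$. So $F$ is well-defined on all $D$-root data.

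To construct $G$, given a real-type $D$-root datum $(\bX, \bR, \bY, \bR^\vee)$, I set $V := \bY$, let $W \subset \Aut_D(V)$ be the subgroup generated by $\{s_\alpha : \alpha \in \bR\}$, and set $P_{s_\alpha} := D\alpha^\vee$. The main obstacle is to prove $W$ is finite, via Theorem~\ref{ThmWFin} applied to $V \otimes_D \mR$ with generating set $T = \{s_\alpha : \alpha \in \bR\}$. By Lemma~\ref{LemBeta1}, $s_\alpha$ depends only on the $D^\times$-orbit of $\alpha$, so $T$ is finite by condition~(3') in Remark~\ref{remprime}; conjugation closure $wTw^{-1}\subset T$ follows from Lemma~\ref{LemABC} together with~(4'). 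The key point is the injectivity of $s \mapsto H_s$: suppose for contradiction that $s_\alpha \ne s_\beta$ share a fixed hyperplane in $V \otimes_D \mR$. Then $\beta = c\alpha$ in $\bX \otimes_D K$ (for the fraction field $K$ of $D$) with some $c \in K^\times$, and Lemma~\ref{LemComputation} with $\phi = \langle\alpha,-\rangle$, $\kappa_1 = \alpha^\vee$, $\kappa_2 = c\beta^\vee$ gives $(s_\alpha s_\beta)^j(\alpha^\vee) = (1+2j)\alpha^\vee - 2jc\beta^\vee$ for all $j \in \mN$. If $\alpha^\vee, \beta^\vee$ are $K$-linearly independent, distinct $j$ yield elements in pairwise distinct $D^\times$-orbits in $\bR^\vee$, contradicting~(3). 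Otherwise $\beta^\vee = d\alpha^\vee$ for some $d \in K^\times$, and~(1) forces $2 = \langle\beta,\beta^\vee\rangle = 2cd$, whence $cd = 1$; substitution then shows $s_\alpha s_\beta$ fixes both $\alpha^\vee$ and the common hyperplane, hence equals the identity, contradicting $s_\alpha \ne s_\beta$.

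With $W$ finite, the remaining checks are routine. Well-definedness of $P_\sigma$ (i.e., $s_\alpha = s_\beta$ implies $D\alpha^\vee = D\beta^\vee$) reduces to a short case analysis: writing $\alpha = a\eta, \beta = b\eta$ for $\eta$ a primitive generator of $\bX \cap K\alpha$, condition~(1) forces $a$ and $b$ to divide $2$ in $D$, and axioms~(2), (2') combined with factorisation in the PID $D$ then force $\beta \in D^\times\alpha$, hence $\beta^\vee \in D^\times\alpha^\vee$. The inclusion $\im(1-s_\alpha) \subset P_{s_\alpha}$ is immediate from the formula for $s_\alpha$, and $W$-equivariance $w(P_\sigma) = P_{w\sigma w^{-1}}$ follows from Lemma~\ref{LemABC}. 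Finally, $F\circ G$ returns the original datum: by~(2) the generators of the $P_\sigma$'s exhaust $\bR^\vee$, and $\bR$ is reconstructed from~\eqref{sigmabeta} after identifying $\bX$ with its image in $V^\ast$ under the perfect pairing; conversely $G\circ F$ recovers the same $W$ (generated by reflections as in~\ref{ExampleData}) and the same $\{P_\sigma\}$ by definition. Isomorphisms on both sides correspond under these identifications, so the map is a bijection on isomorphism classes.
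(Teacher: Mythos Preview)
Your approach is the same as the paper's: build the inverse $G$ by letting $W$ act on $\bY$ via the $s_\alpha$, establish finiteness of $W$ through Theorem~\ref{ThmWFin}, and set $P_{s_\alpha} := D\alpha^\vee$. Your verification of the hyperplane-injectivity hypothesis for Theorem~\ref{ThmWFin} is more detailed than the paper's terse assertion and is correct.

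The gap is in your well-definedness argument for $P_\sigma$. From $s_\alpha = s_\beta$ you deduce $\alpha = a\eta$, $\beta = b\eta$ with $a, b \mid 2$, but the claim that axioms~(2),~(2') together with the PID property then force $\beta \in D^\times\alpha$ is not justified, and in fact can fail for some $D \hookrightarrow \mR$. If $2$ has two non-associate prime factors $\pi_1, \pi_2$ in $D$ (as in $D = \cO_{\mQ(\sqrt{17})}$), take $\bX = \bY = D$ with the multiplication pairing, $\bR = \bR^\vee = D^\times\pi_1 \cup D^\times\pi_2$, and $(u\pi_i)^\vee := u^{-1}\pi_{3-i}$. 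One checks directly that all axioms of Definition~\ref{DefXY} hold; yet $s_{\pi_1} = s_{\pi_2} = -\id$ while $D\pi_1 \ne D\pi_2$, so your $P_{-\id}$ is ill-defined, and indeed this real-type datum (having two $D^\times$-orbits in $\bR^\vee$ but only one reflection) cannot lie in the image of $F$. The paper does not address this point either. For the rings the paper actually uses, namely $D = \mZ$ or $\mZ[1/p]$, the element $2$ is prime or a unit, divisors of $2$ are totally ordered by divisibility, hence one of $a/b$, $b/a$ lies in $D$ and (2') gives the conclusion; so your argument is salvaged under that extra hypothesis on $D$.
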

\begin{proof}
Since we have $D\subset \mR$, the only roots of unity in $D$ are $\pm 1$ and it follows that every reflection of finite order must have order two. Hence the map in Theorem~\ref{ThmData0} is defined on every $D$-root datum.

To each real-type $D$-root datum $(\bX,\bR,\bY,\bR^\vee)$ we will now associate a $D$-root datum, in a way which is easily seen to be the inverse of the above map.
Define the $D$-reflection group $W<\Aut_{D}(\bY)$ generated by $\{s_\alpha\,|\, \alpha\in\bR\}$. To a reflection $s_\gamma$, $\gamma\in\bR$, we associate the corresponding rank one submodule $D\gamma^\vee\subset \bY$.

 We show that $W$ is finite by considering the corresponding real reflection group acting on $\bY\otimes_{D}\mR$. By Lemma~\ref{LemBeta1}, $W$ is generated by a finite (see \ref{DefXY}(2)) set of reflections $\{s_\alpha\,|\, \alpha \in\bR\}$, such that the reflecting hyperplane $\ker (1-s_\alpha)=\ker\langle\alpha,- \rangle$ determines $s_\alpha$. Moreover, we claim that for each $\alpha\in \bR$ and $w\in W$, we have $ws_\alpha w^{-1}= s_{\gamma}$ for some $\gamma\in \bR$. Clearly it suffices to consider the case $w=s_\beta$, which is Lemma~\ref{LemABC}.
 We can now apply Theorem~\ref{ThmWFin}.

Now it follows that the triple $(W,\bY,\{D\gamma^\vee\})$ is a $D$-root datum. Indeed, by Remark~\ref{RemEuc} and \cite[Proposition~1.14]{Hu}, every reflection in $W$ is equal to $s_\gamma$ for some $\gamma\in \bR$, and property (a) in \ref{DefR}(2) is automatic, while (b) follows from Lemma~\ref{LemABC}. 
\end{proof}

Clearly, the bijection in Theorem~\ref{ThmData} exchanges the two notions of extensions of scalars of root data. We conclude this section with some examples of root data which become isomorphic after extension of scalars.
\begin{example}\label{ExampRoot}${}$
\begin{enumerate}
\item The root datum of $SO_{2n+1}$ becomes isomorphic to its dual (the root datum of $Sp_{2n}$) after extension of scalars to $D$ if and only if $2$ is invertible in $D$.
\item The root datum of $SL_n$ becomes isomorphic to its dual (the root datum of $PGL_n$) after extension of scalars to $D$ if and only if $n$ is invertible in $D$.
\end{enumerate}
\end{example}

%
%

\subsection{Isogenies}\label{SecIso1}
We fix a prime $p$ and define isogenies of $\mZ[1/p]$-root data. We
will work with real-type root data, but refer to them simply as root
data (this is justified by Theorem~\ref{ThmData}).

\begin{definition} \label{definition_isogeny}
An isogeny $(\bX,\bR,\bY,\bR^\vee)\to (\bX_1,\bR_1,\bY_1,\bR_1^\vee)$ of $\mZ[1/p]$-root data is an injective morphism $\varphi:\bX\hookrightarrow \bX_1$ of $\mZ[1/p]$-modules 
such that
\begin{enumerate}
\item the induced $\varphi^\vee:\bY_1\to \bY$ is also injective;
\item $\varphi$ restricts to a bijection $\bR\to\bR_1$;
\item $\varphi^\vee(\varphi(\alpha)^\vee)=\alpha^\vee$, for all $\alpha\in\bR$.
\end{enumerate}
\end{definition}

\begin{example}\label{RemIsog}
An isogeny of $\mZ$-root data, with respect to some prime $p$, is
defined in \cite[\S 1]{St}. It follows immediately that the induction
to $\mZ[1/p]$ of such an isogeny yields an isogeny of $\mZ[1/p]$-root
data. Note that Definition \ref{definition_isogeny} is simpler than the definition in \cite[\S 1]{St}, as
  the powers of $p$ present in \cite[\S 1]{St} are subsumed by (2), since
  multiplication by $p$ is invertible on $\mZ[1/p]$-root data.

Conversely, if for $\mZ$-root data $RD_1$ and $RD_2$, there exists an isogeny $\varphi:\mZ[1/p]\otimes RD_1\to \mZ[1/p]\otimes RD_2$, then for some $l\in\mN$, the map $p^j\varphi$ restricts to an isogeny $RD_1\to RD_2$ in the sense of \cite{St} for all $j\ge l$.
\end{example}


\section{Perfection of schemes}\label{GenPerf}

Fix a prime $p$.

\subsection{Notation}We recall some basic set-up of algebraic geometry, see for instance~\cite{DG}.

Fix a field $k$. Denote by $\Alg_k$ the category of commutative $k$-algebras. We consider the categories (where the first two `inclusions' are fully-faithful embeddings)
\begin{equation}\label{3Inc}\Alg_k^{\op}\;\subset\; \Sch_k\;\subset\; \Fais_k\;\subset\; \Fun_k.\end{equation}
Here $\Sch_k$ is the category of $k$-schemes and $\Fun_k$ is the category of functors $\Alg_k\to\Set$. The category $\Fais_k$ stands for the full subcategory of such functors which are sheaves for the fpqc topology. In other words, a functor $F$ is in $\Fais_k$ if and only if
$$F(A)\to F(B)\rightrightarrows F(B\otimes_AB)$$
is an equaliser for every faithfully flat $A$-algebra $B$, and $F$ commutes with finite products. When $k$ is clear, we will usually leave out the subscript in the above categories.

The inclusion $I:\Fais\hookrightarrow \Fun$ has a left adjoint 
$$S:\Fun\to\Fais$$
which commutes with finite limits (as well as all colimits).

By a subgroup of an affine group scheme $G$ we understand a closed subscheme which inherits a group structure, or in other words
an affine group scheme represented by a quotient of the Hopf algebra representing $G$.

\subsection{Perfection functors}

\subsubsection{Frobenius morphisms}\label{DefFrob}
{ For a commutative $\mF_p$-algebra $A$, we have the $p$-th power algebra morphism 
$$\Fr=\Fr_A:\,A\to A, \;\; a\mapsto a^p.$$ For an $\mF_p$-scheme $\sX$, we have the morphism $\Fr: \sX\to\sX$ which is the identity on the underlying topological space and given by the $p$-th power map on the sheaf of algebras. For $F\in\Fun_{\mF_p}$, we define the Frobenius morphism $F\to F$ as the natural transformation given by letting $F$ act on the $p$-th power morphism. Concretely, the evaluation of the natural transformation at an arbitrary $A\in\Alg_{\mF_p}$ is
$$F(A)\xrightarrow{F(\Fr_A)} F(A).$$ These Frobenius morphisms are compatible with the inclusions \eqref{3Inc}.}

For an object $F$ of $\Fun_{\mF_p}$ or $\Alg_{\mF_p}$, the notation $\varinjlim F$ or $\varprojlim F$ will always be used for the direct or inverse limit along the Frobenius morphism. {  For example, for an $\mF_p$-algebra $A$, the algebra $A_{\perf}:=\varinjlim A$ is the direct limit of the system
$$ A\xrightarrow{a\mapsto a^p}A\xrightarrow{a\mapsto a^p}A\xrightarrow{a\mapsto a^p}A\to\cdots.$$
For $F\in \Fun_{\mF_p}$, it follows directly that
$$(\varprojlim F)(A)=F(\varprojlim A)\quad\mbox{and}\quad (\varinjlim F)(A)=F(\varinjlim A).$$}

An $\mF_p$-scheme (or an algebra or functor) is called {\bf perfect} if the Frobenius map is an isomorphism, see \cite[Definition~3.1]{BS}.

\begin{lemma} \label{lem:perf}
The endofunctor of $\Fun_{\mF_p}$
$$F\mapsto F_{\perf}:=\varprojlim F$$
restricts to endofunctors of $\Fais$ and $\Sch$. Moreover, for $A\in \Alg_{\mF_p}$, we have $(\Spec A)_{\perf}=\Spec (A_{\perf})$.
\end{lemma}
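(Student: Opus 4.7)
The plan is to address the three assertions in turn: first the affine formula $(\Spec A)_{\perf} = \Spec A_{\perf}$, then stability of $\Fais$ under perfection, and finally stability of $\Sch$. The affine case is the computational heart, the sheaf case is formal from the adjunction already in place, and the scheme case requires a small gluing argument.

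For the affine identity I would argue directly with the functor of points. For any test algebra $B$, by definition of the inverse limit in $\Fun$, the set $(\Spec A)_{\perf}(B)$ is $\varprojlim_{n}\Hom_{\Alg}(A,B)$, with transition maps given by precomposition with the Frobenius of $A$. Since $\Hom_{\Alg}(-,B)$ turns the filtered colimit $A_{\perf} = \varinjlim A$ into the corresponding limit of sets, the right-hand side is exactly $\Hom_{\Alg}(A_{\perf},B)$, so $(\Spec A)_{\perf}$ is represented by $A_{\perf}$.

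For the sheaf case I would invoke the adjunction $S \dashv I$ recorded above: the embedding $I : \Fais \hookrightarrow \Fun$ is a right adjoint and therefore preserves all limits. Hence if $F$ is an fpqc sheaf, the perfection $F_{\perf}$ — which is a specific (countable, filtered-shaped) limit of copies of $F$ — computed in $\Fun$ automatically lies in $\Fais$; no further sheafification is needed.

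For $\Sch$, the essential observation is that $\Fr : \sX \to \sX$ is the identity on the underlying topological space $|\sX|$. I would choose an affine open cover $\{U_i = \Spec A_i\}$ of $\sX$, note that each $U_i$ and each pairwise intersection $U_{ij} = U_i \cap U_j$ is Frobenius-stable, and apply the affine formula to obtain $(U_i)_{\perf} \cong \Spec (A_i)_{\perf}$. Since perfection of algebras is a filtered colimit and so commutes with localisation, these local perfections glue along the $U_{ij}$ in exactly the same pattern as the $U_i$, yielding a scheme with underlying topological space $|\sX|$ and structure sheaf $\varinjlim \cO_{\sX}$. The step I expect to be the main obstacle is checking that the functor of points of this glued scheme coincides with the inverse limit $\varprojlim \sX$ taken in $\Fun$: for an affine test $\Spec B$, a compatible system of maps $\Spec B \to \sX$ must be seen to factor through a single affine open on each bounded piece, and then the affine case applies. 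That this works at all rests on the fact that Frobenius is a universal homeomorphism, so the perfection is strictly local on $|\sX|$.
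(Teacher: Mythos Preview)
Your proposal is correct and follows essentially the same route as the paper: the $\Fais$ argument is identical (the paper phrases it as ``limits in a Grothendieck topos are computed in the presheaf category''), and for $\Sch$ and the affine formula the paper simply points to the explicit ringed-space realisation $(X,\cO')$ with $\cO'$ the sheafification of $U\mapsto \cO(U)_{\perf}$, which packages your affine computation and gluing in one stroke. Your worry about the final step is not a real obstacle: since Frobenius is the identity on $|\sX|$, all maps in a compatible system $\Spec B \to \sX$ share the same underlying continuous map $f$, so one works Zariski-locally on $\Spec B$ via the cover $\{f^{-1}(U_i)\}$ and applies the affine case there---no global factoring through a single affine open is needed.
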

\begin{proof}
It is a standard property that limits exist in a Grothendieck topos and can be computed in the presheaf category, which shows that perfection restricts to $\Fais$.
The remaining properties follow from the explicit realisation in Example~\ref{ExDef} below.
\end{proof}

\begin{example}\label{ExDef}
For an $\mF_p$-algebra $A$, set $(X,\cO)=\Spec A$. Using the basis of distinguished open subsets it follows easily that $\Spec (A_{\perf})=(X,\cO')$, with $\cO'$ the sheafification of the presheaf $U\mapsto \cO(U)_{\perf}$.

It then follows that for an arbitrary $\mF_p$-scheme $\sX=(X,\cO)$, the scheme $\sX_{\perf}$ can be realised as $(X,\cO')$ with $\cO'$ the sheafification of the presheaf $U\mapsto \cO(U)_{\perf}$.
\end{example}

\begin{remark}
In \ref{ExDef} is essential to take $\cO'$ as the direct limit of $\cO$ in the category of sheaves (as opposed to presheaves). For example:
\begin{enumerate}
\item For an infinite family of $\mF_p$-algebras $A_i$, consider the (non-affine) scheme $(X,\cO)=\sqcup_i\Spec A_i$. Then, for general $A_i$, we have, by the sheaf axioms and Lemma~\ref{lem:perf},
$$\Gamma(X,\cO')\;=\;\prod_i (A_i)_{\perf}\;\not=\; \Gamma(X,\cO)_{\perf}=\left(\prod_i A_i\right)_{\perf}.$$
 \item Also for non-noetherian affine schemes this phenomenon occurs. Consider $A=\mF_p[x_i\,|\,i\in\mN]/(x_ix_j, i\not=j)$. Let $U_i$ be the distinguished open corresponding to $x_i$. The (disjoint) union $U=\cup_i U_i$ is the complement of the origin and, as in (1), we find $\cO(U)_{\perf}\not=\cO'(U)$.
\end{enumerate}

\end{remark}

\begin{remark}\label{remaffsch}
For $\mF_p$-algebras $A,B$ we have
$$\Alg(A_{\perf},B)\cong \varprojlim \Alg(A, B)\cong\Alg(A,\varprojlim B).$$
In particular, if $B$ is perfect, we have $\Alg(A_{\perf},B)\cong \Alg(A,B)$.
\end{remark}

\begin{lemma}\label{LemBasic} Let $\sX$ be an $\mF_p$-scheme.
\begin{enumerate}
\item We have $\dim\sX=\dim\sX_{\perf}$ and $\sX_{\perf}$ is quasi-compact (resp. connected) if and only if $\sX$ is quasi-compact (resp. connected). 
\item Any radical ideal $I$ in a perfect $\mF_p$-algebra $A$ satisfies $I^2=I$.
\item If $\sX$ is perfect, for $x\in\sX$ we have $T_{\sX,x}=0$.
\item Perfect schemes are reduced. Moreover, $-_{\perf}$ sends $\sX_{\red}\to\sX$ to an isomorphism.
\end{enumerate}
\end{lemma}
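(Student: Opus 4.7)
The four parts are largely independent, but (3) relies on (2), and (4)'s second assertion is most cleanly handled using the universal property of perfection. My plan is to treat them in the order given.

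For (1), the key point is already present in Example~\ref{ExDef}: the perfection $\sX_{\perf}=(X,\cO')$ has the \emph{same} underlying topological space as $\sX=(X,\cO)$ (only the structure sheaf changes, via sheafified filtered colimit along Frobenius). So dimension of the underlying space, quasi-compactness, and connectedness are tautologically preserved. One should briefly remark here that this is the right notion of dimension for perfect schemes, since local rings typically cease to be Noetherian after perfection.

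For (2), start with $a\in I$. Since $A$ is perfect, there exists a unique $b\in A$ with $b^p=a$. As $I$ is radical and $b^p=a\in I$, we have $b\in I$. If $p=2$, then $a=b\cdot b\in I^2$; otherwise $a=b\cdot b^{p-1}\in I\cdot I^{p-1}\subset I^2$. The reverse inclusion is trivial. For (3), I note that stalks of a perfect scheme are perfect: localisation commutes with filtered colimits, so the Frobenius of $\cO_{\sX,x}$ is the localisation of an isomorphism, hence an isomorphism. The maximal ideal $\mathfrak{m}_x\subset \cO_{\sX,x}$ is prime, hence radical, so (2) gives $\mathfrak{m}_x=\mathfrak{m}_x^2$, whence $T_{\sX,x}=(\mathfrak{m}_x/\mathfrak{m}_x^2)^\ast=0$.

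For (4), first I show perfect schemes are reduced: working affine-locally, if $A$ is perfect and $a^{p^n}=0=0^{p^n}$ in $A$, then injectivity of the Frobenius gives $a=0$. For the second assertion, working affine-locally it suffices to show that the surjection $A\tto A_{\red}$ induces an isomorphism $A_{\perf}\xrightarrow{\sim}(A_{\red})_{\perf}$. The argument I would give uses universal properties: the perfection is the initial perfect $\mF_p$-algebra under $A$ (cf.\ Remark~\ref{remaffsch}). Since $A_{\perf}$ is reduced by the first part of (4), the map $A\to A_{\perf}$ factors uniquely through $A_{\red}$, and any perfect algebra receiving a map from $A$ automatically kills the nilradical. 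Hence the categories of perfect algebras under $A$ and under $A_{\red}$ coincide, so their initial objects agree. (Alternatively, one verifies directly that the colimit $\varinjlim A$ along Frobenius is unchanged by quotienting out nilpotents at each stage, since a nilpotent $a\in A$ with $a^{p^n}=0$ is identified in the colimit with its image after $n$ applications of Frobenius, which is $0$.)

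I do not expect serious obstacles anywhere; the only subtle point is the need to check that stalks remain perfect in (3), which is where the non-Noetherian nature of perfect rings could cause confusion if one tried to invoke dimension-theoretic tools; the elementary argument via (2) sidesteps this.
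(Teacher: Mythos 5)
Your proof is correct and matches the paper's approach, which simply declares parts (2) and (4) ``obvious''/``immediate'' and derives (3) from (2) via the vanishing of the Zariski cotangent space; you have filled in those details accurately (the paper also records a second, functor-of-points argument for (3), which you do not need). One small stylistic note: in (3) the observation that stalks of a perfect scheme are perfect is just ``stalks of a sheaf isomorphism are isomorphisms,'' and need not invoke commutation of localisation with filtered colimits.
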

\begin{proof}
Part (1) follows immediately from the fact that the underlying topological spaces of $\sX$ and $\sX_{\perf}$ are the same, see Example~\ref{ExDef}. Part (2) is obvious. By (2), it is clear that the Zariski cotangent space is zero, which proves (3).
Alternatively, for (3), let $A$ be a perfect $\mF_p$-algebra and $\kappa$ a field. Every algebra morphism $A\to\kappa[\epsilon]/(\epsilon^2)$ factors through $\kappa\hookrightarrow\kappa[\epsilon]/(\epsilon^2)$. Applying this to $\Spec (\kappa(x)[\epsilon]/(\epsilon^2))\to\sX$ shows the claim. 
Part (4) is immediate.
\end{proof}


\subsection{Relative version}
Fix a perfect field $k$ of characteristic $p$, for the remainder of the section.

\subsubsection{}\label{relative1}
For a fixed $\mF_p$-scheme $\sT$, perfection naturally yields a functor from the category of $\sT$-schemes to the category of $\sT_{\perf}$-schemes. Using the canonical morphism $\sT_{\perf}\to\sT$, we can also interpret perfection as an endofunctor of the category of $\sT$-schemes. We will take the latter point of view for $\sT=\Spec k$ (in which case $\sT_{\perf}\to\sT$ is an isomorphism) and we will henceforth interpret the perfection functor as an endofunctor of $\Sch_k$.

\subsubsection{}\label{relative2} Sometimes it will be beneficial to consider an alternative realisation of the perfection of $k$-schemes, in which the morphisms in the chain of which we take the limit are morphisms of $k$-schemes. For a $k$-scheme $\sX$, let $\sX^{(1)}$ denote the extension of scalars of $\sX$ along the Frobenius automorphism $k\to k$. The morphism $\Fr:\sX\to \sX$ over $\mF_p$ from \ref{DefFrob} then lifts to a morphism $\Fr:\sX\to \sX^{(1)}$ of $k$-schemes.
For instance, for a $k$-algebra $A$ this corresponds to the morphism
\begin{equation}\label{eqRelFr}A^{(1)}\to A,\quad \lambda\otimes a\mapsto \lambda\,a^p,\end{equation}
with $A^{(1)}=k\otimes A$ the extension of scalars along the Frobenius automorphism of $k$.

By taking iterates of the Frobenius automorphism and its inverse ($k$ is perfect), we define $\sX^{(i)}$ for $i\in\mZ$. Then we have (over $k$)
$$\sX_{\perf}\,\cong\,\varprojlim_{i\to\infty}\sX^{(-i)}.$$

The advantage of the approach in this subsection is that it extends to $\Fun_k$, by setting
$$F_{\perf}(A):=\varprojlim_{i\to\infty} F(A^{(i)}).$$
By construction, perfection commutes with limits, for instance products, in $\Fun_k$.

\begin{prop}\label{PropEpi}
Consider a morphism $f$ in $\Fais_k$.
\begin{enumerate}
\item  If $f$ is an epimorphism in $\Fais_k$, then so is $f_{\perf}$.
\item If $f$ is an monomorphism in $\Fais_k$, then so is $f_{\perf}$.
\end{enumerate}
\end{prop}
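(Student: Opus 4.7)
The plan is to handle the two parts separately: part (2) is immediate from pointwise considerations, while part (1) requires a direct fpqc lifting argument.

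\textbf{Part (2).} The inclusion $\Fais_k \hookrightarrow \Fun_k$ is right adjoint to sheafification $S$, and so preserves monomorphisms. Thus $f$ is a monomorphism in $\Fais_k$ iff it is pointwise injective (the characterisation of monomorphisms in the presheaf category $\Fun_k$). Since $F_{\perf}(A) = \varprojlim_i F(A^{(i)})$ is a pointwise sequential inverse limit and inverse limits of injections of sets are injective, $f_{\perf}(A)$ is injective for every $A \in \Alg_k$. Combined with Lemma~\ref{lem:perf} (which places $f_{\perf}$ in $\Fais_k$), this shows $f_{\perf}$ is a monomorphism in $\Fais_k$.

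\textbf{Part (1).} Recall that $f\colon F \to G$ is an epimorphism in $\Fais_k$ iff for every $A \in \Alg_k$ and $y \in G(A)$ there is an fpqc cover $A \to B$ and $x \in F(B)$ with $f(x) = y|_B$. I would aim to verify the analogous property for $f_{\perf}$. Given $y = (y_i) \in G_{\perf}(A)$, the plan is to inductively construct fpqc covers $A \to B_0 \to B_1 \to \cdots$ together with Frobenius-compatible lifts $(x_i^{(N)})_{i \le N}$ of $(y_i)_{i \le N}$ over $B_N$. Two structural facts make this possible: (i) since $k$ is perfect, Frobenius twists preserve the fpqc property, so $A \to B$ fpqc implies $A^{(i)} \to B^{(i)}$ fpqc, and (ii) affine faithfully flat maps are stable under filtered colimits, so $B := \varinjlim_N B_N$ is fpqc over $A$. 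At the inductive step, $f$ being an epimorphism produces some lift of $y_{N+1}|_{B_{N+1}^{(N+1)}}$ after further refining $B_N$; one then replaces each lower-level lift by the iterated Frobenius image of this new top-level lift, which automatically restores compatibility while remaining a lift of the corresponding $y_i$.

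The main obstacle is that this rewriting of lower-level lifts at each inductive stage prevents the system $(x_i^{(N)})$ from stabilising at any finite $N$. The resolution is to pass to the colimit cover $B$: over $B$, at each level $i$ the eventual lift is realised as a Frobenius image of a choice made at every sufficiently large level $N$, and a diagonal argument then extracts a coherent compatible system $(x_i) \in F_{\perf}(B)$ with $f_{\perf}((x_i)) = y|_B$. An alternative, more conceptual route would be to exploit the characterisation of epimorphisms in a topos as coequalizers of their kernel pairs: perfection commutes with the pullback $F\times_G F$ by Lemma~\ref{lem:perf}, so the issue reduces to showing that perfection commutes with quotients by sheaf equivalence relations in $\Fais_k$, which still requires a descent-type argument but avoids the explicit level-by-level construction.
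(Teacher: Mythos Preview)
Your argument for part~(2) is correct and coincides with the paper's one-line justification (``a generality for limits of monomorphisms'').

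For part~(1), the approach has a genuine gap. The inductive construction correctly produces, at each stage $N$, an fpqc cover $A \to B_N$ together with a compatible family $(x_i^{(N)})_{i\le N}$ of lifts, and you correctly identify the obstacle: since the lower-level lifts are redefined at every stage as Frobenius images of the new top lift, the families are not compatible as $N$ varies. The proposed resolution, however, does not work. Over the colimit cover $B = \varinjlim_N B_N$, for each fixed level $i$ you obtain a \emph{sequence} of elements $x_i^{(N)}|_B \in F(B^{(i)})$ indexed by $N \ge i$; these are all lifts of $y_i|_{B^{(i)}}$, but they bear no relation to one another, and there is no limiting or selection process available in the bare set $F(B^{(i)})$. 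Concretely, the natural diagonal choice $x_i := x_i^{(i)}|_B$ fails the compatibility $\tau(x_{i+1}) = x_i$, since $\tau(x_{i+1}^{(i+1)}|_B) = x_i^{(i+1)}|_B$, which has no reason to agree with $x_i^{(i)}|_B$. Attempting instead to choose the new top lift $x_{N+1}^{(N+1)}$ compatibly with the existing $x_N^{(N)}$ amounts to asking that $F^{(-N-1)} \to G^{(-N-1)} \times_{G^{(-N)}} F^{(-N)}$ be an epimorphism, which does not follow from $f$ being one. Your alternative route via kernel pairs correctly reduces the question to showing that perfection commutes with sheaf quotients by equivalence relations, but this is precisely the crux and is not supplied.

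The paper avoids any explicit construction: it cites the fpqc-local lifting characterisation of sheaf epimorphisms from \cite[Corollaire~2.8]{DG} and then appeals to \cite[Lemma~3.4(xii)]{BS} to transfer this criterion to the perfection. The Bhatt--Scholze input packages exactly the coherence across Frobenius twists that your level-by-level argument cannot produce by hand.
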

\begin{proof}
For part (1), we can use the criterion from \cite[Corollaire~2.8]{DG} to describe that $f$ is an epimorphism, which carries over to $f_{\perf}$ by \cite[Lemma~3.4(xii)]{BS}. Part (2) 
is a generality for limits of monomorphisms.
\end{proof}

It is obvious that $\sX\mapsto \sX_{\perf}$ loses a lot of information. For instance 
$$(\sX_\perf)_{\perf}\cong \sX_{\perf}\cong (\sX_{\red})_{\perf}.$$ A more subtle example is given below.

\begin{example} Assume $p>2$.
Consider the algebra $A=k[x,y]/(y^p-x^2)$ with injective algebra morphism $A\hookrightarrow k[z]$, given by $x\mapsto z^p$, $y\mapsto z^2$. Then $\sX:=\Spec A$ is reduced, but perfection sends $\mA^1_{k}\to \sX$ to an isomorphism.
\end{example}

\subsection{Perfect finite type}

Recall that $\sX\in \Sch_k$ is of finite type (over $k$) if the underlying topological space is quasi-compact and for every $x\in \sX$ there exists an affine open neighbourhood isomorphic to the spectrum of a finitely generated $k$-algebra.

%

\begin{lemma}\label{LemFinGen}
For a perfect commutative $k$-algebra $A$, the following conditions are equivalent:
\begin{enumerate}
\item[(a)] There is a finite subset $S\subset A$ such that the set 
$$S'=\{x\in A\,|\, x^{p^n}\in S\mbox{ for some $n\in\mN$}\}$$
generates $A$ as a $k$-algebra;
\item[(b)] There exists a finitely generated $k$-algebra $A_0$ with $A\cong (A_0)_{\perf}$.
\end{enumerate}
If the conditions are satisfied we say that $A$ is {\bf perfectly finitely generated}.
\end{lemma}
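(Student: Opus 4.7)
The plan is to prove the two implications separately, using the explicit description of perfection as a filtered colimit along Frobenius.

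For (b)$\Rightarrow$(a): given an isomorphism $A\cong (A_0)_{\perf}$ with $A_0=k[s_1,\dots,s_m]$, I would take $S=\{s_1,\dots,s_m\}$ viewed inside $A$ via the canonical map $A_0\to (A_0)_{\perf}$. Since $A$ is perfect, each $s_i$ admits a unique $p^n$-th root in $A$ for every $n$, and all these roots lie in $S'$. It then suffices to check that the $k$-subalgebra of $(A_0)_{\perf}$ generated by these roots exhausts $(A_0)_{\perf}$, which follows from the identity $(x+y)^p=x^p+y^p$ in characteristic $p$: sums and products of $p^n$-th roots of elements of $A_0$ are again $p^N$-th roots of elements of $A_0$ for $N$ large enough.

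For (a)$\Rightarrow$(b): I would let $A_0\subset A$ be the $k$-subalgebra generated by $S$, which is finitely generated by construction. By Remark~\ref{remaffsch} and perfectness of $A$, the inclusion $A_0\hookrightarrow A$ extends uniquely to a $k$-algebra map $\varphi\colon(A_0)_{\perf}\to A$. Surjectivity of $\varphi$ is the easy half: any $x\in S'$ satisfies $x^{p^n}\in A_0$, so $x$ is the unique $p^n$-th root in $A$ of an element of $A_0$ and hence lies in $\im\varphi$; since $S'$ generates $A$ by assumption, $\varphi$ must be surjective. For injectivity, an element of $(A_0)_{\perf}$ is represented by some $a\in A_0$ at the $n$-th stage of the colimit, and its image in $A$ is the unique $p^n$-th root of (the image of) $a$; if this vanishes then $a=0$ in $A$, and since $A_0\hookrightarrow A$ this forces $a=0$ in $A_0$, so the original element is zero in $(A_0)_{\perf}$.

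The only subtle point is the injectivity step, which really amounts to identifying $(A_0)_{\perf}$ with the $k$-subalgebra of $A$ consisting of $p^n$-th roots of elements of $A_0$. This identification hinges on $A_0\hookrightarrow A$ being injective, which holds automatically here by our choice of $A_0$. Once this is in place, the rest is a routine matching of generators: the $k$-algebra generators $S$ of $A_0$ produce, after passage to perfection, exactly the set $S'$ described in (a).
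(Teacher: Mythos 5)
Your proof is correct. The paper simply declares this lemma an ``Exercise,'' so there is no written proof to compare against; your argument is a reasonable way to fill it in.

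Both directions hold up. For (a)$\Rightarrow$(b): since $A$ is perfect it is reduced (Lemma~\ref{LemBasic}), so $A_0\subset A$ is reduced, the canonical map $A_0\to(A_0)_{\perf}$ is injective, and the extension $\varphi$ obtained from Remark~\ref{remaffsch} is automatically a $k$-algebra map because it restricts to the inclusion $k\hookrightarrow A$. Your surjectivity and injectivity arguments are then both sound (and injectivity does not even need to invoke injectivity of $\iota_n$, since $a=0$ in $A_0$ already forces $\iota_n(a)=0$).

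For (b)$\Rightarrow$(a), the one place worth tightening is the claim that the $k$-subalgebra generated by the roots of the $s_i$ exhausts $(A_0)_{\perf}$. The cleanest statement is: for each $n$, the structure map $\iota_n\colon A_0\to(A_0)_{\perf}$ of the colimit is a ring homomorphism, so $\iota_n(A_0)=\iota_n(k)\bigl[\iota_n(s_1),\dots,\iota_n(s_m)\bigr]$; here $\iota_n(s_i)=s_i^{1/p^n}\in S'$, and $\iota_n(k)$ is again $k$ (sitting inside $(A_0)_{\perf}$ via $\iota_0$) since $\iota_n(c)=c^{1/p^n}$ and $k$ is perfect. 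Hence $\iota_n(A_0)$ lies in the subalgebra generated by $S'$, and taking the union over $n$ gives all of $(A_0)_{\perf}$. Your appeal to $(x+y)^p=x^p+y^p$ is pointing at this, but phrasing it via $\iota_n$ being a ring map (and accounting for the coefficient twist $c\mapsto c^{1/p^n}$, which uses perfectness of $k$) makes the step watertight.
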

\begin{proof}
Exercise.
\end{proof}

\begin{prop}
For a perfect $k$-scheme $\sX$, the following are equivalent:
\begin{enumerate}
\item[(a)] $\sX$ is quasi-compact and for every $x\in \sX$ there exists an affine open neighbourhood corresponding to a perfectly finitely generated $k$-algebra;
\item[(b)] There exists a scheme $\sY$  of finite type over $k$ with $\sX\cong \sY_{\perf}$.
\end{enumerate}
If the conditions are satisfied we say that $\sX$ is of {\bf perfect finite type} (over $k$).
\end{prop}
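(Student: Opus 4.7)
The implication $(b) \Rightarrow (a)$ follows readily from earlier results. Given $\sY$ of finite type over $k$ with $\sX \cong \sY_{\perf}$, one chooses a finite affine cover $\sY = \bigcup_i \Spec B_i$ with each $B_i$ finitely generated. By Example~\ref{ExDef}, $\sX$ shares the underlying topological space of $\sY$ (hence is quasi-compact), and is covered by the affine opens $\Spec (B_i)_{\perf}$, each of which is perfectly finitely generated by Lemma~\ref{LemFinGen}.

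For $(a) \Rightarrow (b)$, the plan is to cover $\sX$ by finitely many $U_i = \Spec A_i$ with $A_i \cong (B_i)_{\perf}$ for some finitely generated $B_i$ (using Lemma~\ref{LemFinGen}), and to glue suitable Frobenius twists of the $\Spec B_i$ into a finite-type model $\sY$. The essential technical tool is the following approximation principle, which follows from $C_{\perf} = \varinjlim_i C^{(-i)}$ in $\Alg_k$ (see \S\ref{relative2}) together with the compactness of finitely generated $k$-algebras under filtered colimits:
\[
\varinjlim_n \Hom_{\Alg_k}(B, C^{(-n)}) \;\xrightarrow{\sim}\; \Hom_{\Alg_k}(B, C_{\perf}), \qquad \text{$B$ finitely generated.}
\]
Equivalently, a morphism from a finitely generated $B$ into $C_{\perf}$ is determined by the images of finitely many generators, all of which already lie in some finite stage $C^{(-n)}$ of the colimit.

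The transition data is assembled overlap by overlap. For each pair $i < j$, the overlap $U_i \cap U_j$ is quasi-compact, hence covered by finitely many distinguished opens $D(a)$ of $U_i$ with $a \in A_i$. Using $D(a) = D(a^{p^n})$ together with the fact that every element of $A_i = (B_i)_{\perf}$ has a Frobenius power lying in the image of $B_i$, one may arrange $a \in B_i$, producing a quasi-compact open $V_{ij} \subset \Spec B_i$ with $(V_{ij})_{\perf} = U_i \cap U_j$; symmetrically, one obtains $V_{ji} \subset \Spec B_j$. The tautological identification $(V_{ij})_{\perf} = (V_{ji})_{\perf}$, combined with the approximation principle applied to the relevant localisations of $B_i$ and $B_j$, yields a finite-type isomorphism $\phi_{ij} : V_{ij} \xrightarrow{\sim} V_{ji}$ after passing to a sufficiently high common Frobenius iterate of the $B_i$.

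The main obstacle is the cocycle condition $\phi_{jk} \circ \phi_{ij} = \phi_{ik}$ on triple overlaps. This holds tautologically after perfection (both sides restrict to the identity on $U_i \cap U_j \cap U_k$), but to arrange it at the finite-type level one must invoke the approximation principle once more and pass to a further common Frobenius iterate of the $B_i$. Since only finitely many cocycle identities on finitely many overlap components need to be enforced, a single uniform Frobenius iterate suffices. Gluing the resulting data yields a quasi-compact scheme $\sY$ with affine cover by spectra of finitely generated $k$-algebras, hence of finite type; and since perfection is computed locally by perfecting the structure sheaf (Example~\ref{ExDef}), we conclude $\sY_{\perf} \cong \sX$.
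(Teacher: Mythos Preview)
Your proof of $(b)\Rightarrow(a)$ matches the paper's (which just declares this direction clear). For $(a)\Rightarrow(b)$ the paper gives no argument and simply cites \cite[Proposition~3.13]{BS}; your sketch is essentially a reconstruction of that proof. The outline---choose finite-type models $B_i$ on an affine cover, descend the gluing data to a common Frobenius level via the approximation principle, then enforce the cocycle condition by a further uniform twist---is correct. A few steps are compressed and deserve a line of justification: quasi-compactness of $U_i\cap U_j$ is not immediate from hypothesis (a) but holds because each $U_i$, hence $\sX$, has noetherian underlying topological space (being homeomorphic to $\Spec B_i$); the overlaps $V_{ij}$ are not affine in general, so ``the approximation principle applied to the relevant localisations'' really means applying it on a finite distinguished-open cover of $V_{ij}$ and checking compatibility; and to obtain an \emph{isomorphism} $\phi_{ij}$ at finite level rather than merely a morphism, one must approximate both the gluing map and its inverse and then use approximation once more to force the two composites to agree with the canonical Frobenius maps---exactly the manoeuvre carried out later in the proof of Proposition~\ref{PropIso}. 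With these details filled in, your argument is a valid self-contained alternative to the citation, trading brevity for independence from \cite{BS}.
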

\begin{proof}
Clearly (b) implies (a). That (a) implies (b) is proved in \cite[Proposition~3.13]{BS}.
\end{proof}
 

\subsection{Perfection of line bundle cohomology and quotients}

All schemes and functors are assumed to be over $k$.

\begin{lemma}\label{LemLine}
Let $\sX$ be a quasi-compact separated scheme over $k$ and $\cL$ a line bundle on~$\sX$. For the pull-back $p^\ast \cL$ along $p:\sX_{\perf}\to \sX$ and $i\in \mN$, we have
$$H^i(\sX_{\perf},p^\ast \cL)\;\cong\; \varinjlim_j H^i(\sX,\cL^{\otimes p^j}),$$
where the transition maps are induced from $\cL^{\otimes p^j}\to \cL^{\otimes p^{j+1}}$, $f\mapsto f^{\otimes p}$.
\end{lemma}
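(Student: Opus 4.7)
The plan is to realise $\sX_{\perf}$ as an inverse limit of copies of $\sX$ with affine transition maps, apply the well-known commutation of quasi-coherent cohomology with such limits, and then identify the pulled-back line bundles with the tensor powers $\cL^{\otimes p^j}$.

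By Example~\ref{ExDef}, $\sX_{\perf}$ is the limit in schemes of the $\mN^{\op}$-diagram
$$\sX\xleftarrow{\Fr}\sX\xleftarrow{\Fr}\sX\xleftarrow{\Fr}\cdots$$
where $\Fr$ denotes the absolute Frobenius; each transition map is the identity on the underlying topological space and is therefore affine, and the canonical projection to the zeroth term recovers $p:\sX_{\perf}\to\sX$. Since $\sX$ is quasi-compact and separated (hence qcqs), the standard commutation of cohomology of quasi-coherent sheaves with cofiltered limits of qcqs schemes along affine transition maps (compare \cite{BS}), applied to the compatible system $\cF_j:=(\Fr^j)^\ast\cL$ with pullback $p^\ast\cL$ to $\sX_{\perf}$, yields
$$H^i(\sX_{\perf},p^\ast\cL)\;\cong\;\varinjlim_j H^i(\sX,\cF_j),$$
with transition maps induced by pullback along $\Fr$.

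It remains to identify $\cF_j$ with $\cL^{\otimes p^j}$ compatibly with the transition maps in the statement. Locally, if $\cL|_{U_\alpha}$ is trivialised by $e_\alpha$ with transition functions $g_{\alpha\beta}\in\cO(U_\alpha\cap U_\beta)$, then, because the absolute Frobenius acts by $p$-th powers on $\cO_\sX$, the sheaf $\Fr^\ast\cL$ has transition functions $g_{\alpha\beta}^p$. This provides a canonical isomorphism $\Fr^\ast\cL\cong \cL^{\otimes p}$ sending $\Fr^\ast e_\alpha$ to $e_\alpha^{\otimes p}$; iterating gives $\cF_j\cong \cL^{\otimes p^j}$, and on local sections the induced map becomes precisely $s\mapsto s^{\otimes p}$, as required. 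The main obstacle is to pin down a precise reference for the limit-cohomology commutation valid under our qcqs hypotheses; once that is in hand, the identification of the pullback system with tensor powers of $\cL$ is routine.
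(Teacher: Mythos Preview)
Your proof is correct, but it follows a somewhat different route from the paper's. The paper exploits directly that $\sX_{\perf}$ and $\sX$ share the same underlying topological space (Example~\ref{ExDef}): it writes $p^\ast\cL \cong (\varinjlim \cO_\sX)\otimes_{\cO_\sX}\cL \cong \varinjlim_j \cL^{\otimes p^j}$ as a colimit of sheaves on that fixed space, and then uses that for a quasi-compact separated scheme the \v{C}ech complex associated to a finite affine cover computes $H^i$, so that $H^i(\sX,-)$ commutes with filtered colimits. This is entirely self-contained. By contrast, you pass through the inverse-limit description of $\sX_{\perf}$ and invoke the general commutation of quasi-coherent cohomology with cofiltered limits of qcqs schemes along affine transition maps, then identify $\Fr^\ast\cL\cong\cL^{\otimes p}$ via transition functions. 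Your approach is perfectly valid and arguably more conceptual, but the black-box limit result you appeal to is itself usually proved by exactly the \v{C}ech argument the paper carries out; the paper's version is therefore more elementary and avoids the referencing issue you yourself flag. Your identification of the transition maps with $s\mapsto s^{\otimes p}$ is correct and carefully done.
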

\begin{proof}
Since $\sX$ is quasi-compact we can take a finite cover $\mathbb{U}$ by affine opens and, since $\sX$ is separated, intersections of these opens are again affine. Moreover, the cohomology groups $H^i(\sX,-)$ are canonically isomorphic to the \v{C}ech cohomology groups $\check{H}^i(\mathbb{U},-)$. It follows that $H^i(\sX,-)$ commutes with direct limits.

Now, for a line bundle, we have (as sheaves on the underlying topological space of $\sX_{\perf}$ or $\sX$) isomorphisms
$$p^\ast \cL\;\cong\; (\varinjlim \cO_{\sX})\otimes_{\cO_{\sX}}\cL\;\cong\; \varinjlim \cL^{\otimes p^j},$$
which follow easily from Remark~\ref{RemLine} below.

The conclusion follows from the combination of the two paragraphs.
\end{proof}

 {
\begin{remark}\label{RemLine}
In case $\sX=\Spec A$ for a commutative $k$-algebra $A$, Lemma~\ref{LemLine} simply states that
$$A_{\perf}\otimes_A\cL\;\cong\; \varinjlim \cL^{\otimes p^j},$$
for every invertible $A$-module $\cL$. To prove this isomorphism, we only need to observe that the morphism
$$A\otimes_A \cL\;\to\;\cL^{\otimes p},\quad a\otimes v\mapsto a v^{\otimes p},$$
where the first module is the extension of scalars of $\cL$ along the $p$-th power map, is an isomorphism. Indeed, the morphism commutes with localisation and is obviously an isomorphism when $A$ is local.
\end{remark}}

\subsubsection{}For $X\in \Fais$ and $G$ a group object in $\Fais$ acting on $X$ (on the right), we consider the corresponding co-equalisers in $\Fun$ and $\Fais$ of $X\times G\rightrightarrows X$, and denote them by $X/_0G$ and $X/_1 G$. In particular $X/_1G=S(X/_0G)$, for the sheafification functor $S:\Fun\to\Fais$.

It follows from the definitions that $G_{\perf}$ is again a group object, and acts on $X_{\perf}$. We create the following commutative diagram in $\Fun$
\begin{equation}\label{eqQuo}\xymatrix{X_{\perf}\ar[rr]&&X_{\perf}/_0G_{\perf}\ar[rr]\ar[d]&& (X/_0G)_{\perf}\ar[d]\\
&&X_{\perf}/_1 G_{\perf}\ar[rr]&& (X/_1G)_{\perf}.} \end{equation}
The vertical arrows are induced from the adjunction $S\dashv I$ (either directly or via the action of the perfection functor). The left horizontal arrow is the defining one for the co-equaliser. The remaining two arrows are uniquely defined from the coequaliser properties applied to the perfection of the morphisms $X/_1G\leftarrow X\rightarrow X/_0G$.


\begin{theorem}\label{ThmQuot1}
Assume that the action of $G$ on $X$ is free, then the morphism from \eqref{eqQuo} 
$$X_{\perf} /_1 G_{\perf}\;\to\; (X/_1 G)_{\perf}$$
in $\Fais_k$ is an isomorphism.
\end{theorem}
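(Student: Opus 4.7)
The plan is to show that $(X/_1 G)_{\perf}$ satisfies the defining properties of the sheaf quotient $X_{\perf}/_1 G_{\perf}$ within the Grothendieck topos $\Fais_k$. More precisely, I would verify that the map $X_{\perf}\to (X/_1 G)_{\perf}$ is an epimorphism in $\Fais_k$ whose kernel pair coincides with the action groupoid $X_{\perf}\times G_{\perf}\rightrightarrows X_{\perf}$. Since every epimorphism in a Grothendieck topos is the coequaliser of its kernel pair (an effective epimorphism), this would identify $(X/_1 G)_{\perf}$ with the sheaf coequaliser $X_{\perf}/_1 G_{\perf}$.

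For the kernel pair, the freeness of the $G$-action on $X$ means the map $X\times G\to X\times X$, $(x,g)\mapsto (x,xg)$, is a monomorphism in $\Fais_k$. Combined with the fact that $X\to X/_1 G$ is an effective epimorphism (a standard property of sheaf quotients by free actions in a topos), this identifies the kernel pair $X\times_{X/_1 G}X$ canonically with $X\times G$. Using the alternative realisation of perfection from \ref{relative2}, namely $F_{\perf}(A)=\varprojlim_i F(A^{(i)})$, perfection is itself a limit in $\Fun_k$ and hence commutes with all (small) limits, in particular fibre products and products. Applying this yields
\[
X_{\perf}\times_{(X/_1 G)_{\perf}}X_{\perf}\;\cong\;(X\times_{X/_1 G}X)_{\perf}\;\cong\;(X\times G)_{\perf}\;\cong\;X_{\perf}\times G_{\perf},
\]
as required.

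For the epimorphism condition, Proposition~\ref{PropEpi}(1) gives immediately that $X_{\perf}\to (X/_1 G)_{\perf}$ is an epimorphism in $\Fais_k$. Combining this with the kernel pair identification above and the fact that epimorphisms in the Grothendieck topos $\Fais_k$ are effective, $(X/_1 G)_{\perf}$ is identified with the coequaliser in $\Fais$ of the pair $X_{\perf}\times G_{\perf}\rightrightarrows X_{\perf}$, which is by definition $X_{\perf}/_1 G_{\perf}$. Chasing through the universal properties shows that the resulting isomorphism agrees with the canonical comparison map from \eqref{eqQuo}.

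The main obstacle, and the point requiring the most care, is reconciling the two descriptions of perfection (as $\varprojlim$ along Frobenius in \ref{ExDef} versus as $\varprojlim_i F(A^{(i)})$ in \ref{relative2}) and checking that limits formed in $\Fun_k$ really do descend to limits in $\Fais_k$ so that the fibre product identification above takes place in $\Fais_k$. This is standard, using that the inclusion $\Fais\hookrightarrow \Fun$ preserves limits, but needs to be tracked to guarantee that the isomorphism constructed lives in the correct category. Once this bookkeeping is in place, the proof is purely formal and rests on the general structure of Grothendieck topoi.
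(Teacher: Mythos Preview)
Your proof is correct but follows a different strategy from the paper's. Both arguments begin identically, invoking Proposition~\ref{PropEpi}(1) to see that $X_{\perf}\to (X/_1 G)_{\perf}$ is an epimorphism in $\Fais_k$. From there they diverge.

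The paper shows the comparison map is a monomorphism (and hence an isomorphism, since mono $+$ epi $=$ iso in a topos). It does this at the presheaf level: it proves that $X_{\perf}/_0 G_{\perf}\to (X/_1 G)_{\perf}$ is a monomorphism in $\Fun$, split into two pieces. First, $X_{\perf}/_0 G_{\perf}\to (X/_0 G)_{\perf}$ is injective by an elementary set-theoretic argument about inverse limits of free actions. Second, $(X/_0 G)_{\perf}\to (X/_1 G)_{\perf}$ is a monomorphism because $X/_0 G\to X/_1 G$ is one (the presheaf quotient by a free action is separated), and perfection preserves monomorphisms (Proposition~\ref{PropEpi}(2)). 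Sheafification then gives the desired mono in $\Fais$.

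You instead compute the kernel pair directly: since perfection is a limit it commutes with fibre products, and freeness gives $X\times_{X/_1 G}X\cong X\times G$, whence $X_{\perf}\times_{(X/_1 G)_{\perf}}X_{\perf}\cong X_{\perf}\times G_{\perf}$. Then you invoke the deeper topos-theoretic fact that every epimorphism in a Grothendieck topos is effective, so $(X/_1 G)_{\perf}$ is forced to be the sheaf coequaliser. Your route is more conceptual and avoids the presheaf-level bookkeeping with $X/_0 G$; the paper's route is more elementary in that it only uses ``mono $+$ epi $=$ iso'' rather than effectiveness of epimorphisms, and makes the role of freeness more visibly concrete at each step.
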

\begin{proof}
By Proposition~\ref{PropEpi}(1), the composite morphism (from top left to bottom right) in~\eqref{eqQuo} is an epimorphism in $\Fais$. In particular the lower horizontal arrow is an epimorphism. Since isomorphisms in Grothendiek topoi are precisely morphisms which are both monomorphisms and epimorphisms, it now suffices to show this arrow is also a monomorphism.

Since sheafification sends monomorphisms to monomorphisms, it actually suffices to show that $X_{\perf}/_0G_{\perf}\to  (X/_1G)_{\perf}$ is a monomorphism in $\Fun$. Using the assumption that the action is free, we can prove that $X_{\perf}/_0G_{\perf}\to (X/_0G)_{\perf}$ and $(X/_0G)_{\perf}\to(X/_1G)_{\perf}$ are monomorphisms.

Indeed, the first case follows from the fact that for an inverse system of sets $X_i$ with {\em free} actions of groups $G_i$, 
$$\varprojlim X_i/\varprojlim G_i\;\to\;\varprojlim (X_i/G_i)$$
is injective. By Proposition~\ref{PropEpi}(2), for the second morphism, it suffices to demonstrate that $X/_0 G\to X/_1G$ is a monomorphism. This is equivalent to the claim that the presheaf $X/_0 G$ is separated for the fpqc topology, meaning that 
$$X(A)/G(A)\;\to \; X(B)/G(B)$$
is an injection for every faithfully flat $A$-algebra $B$ (and the same for finite products of algebras). This is easily verified for free actions, see for instance \cite[\S I.5.5]{Jantzen}.
\end{proof}



\section{Perfection of group schemes}\label{PerfGrp} Let $k$ be a perfect field of characteristic $p>0$. All schemes are assumed to be over $k$.

\subsection{Perfection}

For an affine group scheme $G$ over $k$, clearly $G_{\perf}$ is again an affine group scheme over $k$. Note also that, since $k$ is perfect, $G_{\red}$ is a subgroup of $G$.

\subsubsection{The group $p^{\mZ}$}\label{pZ} We denote by $p^{\mZ}<\mZ[1/p]^\times$ the group of powers of $p$, an infinite cyclic group. Let $G$ be an affine group scheme over $k$, which can be defined over $\mF_p$. Then we can choose an isomorphism $\phi:G^{(1)}\xrightarrow{\sim} G$, which yields an automorphism
$$\Phi=\phi_{\perf}\circ \Fr:\;G_{\perf}\;\xrightarrow{\sim}\; G_{\perf}^{(1)}\;\xrightarrow{\sim}\; G_{\perf},$$
and a corresponding group homomorphism $p^{\mZ}\to \Aut(G_{\perf})$, $p\mapsto \Phi$. Examples are given in~\ref{ExZ1p}.

\begin{lemma}
For a perfect group scheme $G$, we have $\Lie G=0$ and $\mathrm{Dist}G=k$.
\end{lemma}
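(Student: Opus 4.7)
The plan is to treat the two assertions more or less independently, both as easy consequences of the general properties of perfect schemes collected in Lemma~\ref{LemBasic}.

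For the assertion $\Lie G=0$: since $\Lie G$ is by definition the Zariski tangent space $T_{G,e}$ at the identity, this is an instance of Lemma~\ref{LemBasic}(3) applied to $\sX = G$ at the point $x = e$. So there is essentially nothing to do beyond pointing at that statement.

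For the assertion $\mathrm{Dist}\, G = k$: I would unpack the definition of the distribution algebra as the subspace of $k[G]^\ast$ consisting of functionals which vanish on $\mathfrak{m}_e^n$ for some $n \geq 1$, where $\mathfrak{m}_e$ is the maximal ideal of the identity. The key point is that when $G$ is a perfect $\mF_p$-scheme, $k[G]$ is reduced by Lemma~\ref{LemBasic}(4), and $\mathfrak{m}_e$ is automatically radical since the residue field at $e$ is $k$. Hence by Lemma~\ref{LemBasic}(2) we have $\mathfrak{m}_e^2 = \mathfrak{m}_e$, and inductively $\mathfrak{m}_e^n = \mathfrak{m}_e$ for all $n \geq 1$. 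Therefore any $\mu \in \mathrm{Dist}\, G$ already vanishes on $\mathfrak{m}_e$, so factors through $k[G]/\mathfrak{m}_e \cong k$. This identifies $\mathrm{Dist}\, G$ with the one-dimensional space spanned by evaluation at $e$, which is the constant line $k \subset \mathrm{Dist}\, G$.

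Neither step presents a real obstacle; the work has been done in Lemma~\ref{LemBasic}. The only small subtlety worth spelling out in the write-up is why $\mathfrak{m}_e$ is radical (to apply part~(2)), and why a distribution killing $\mathfrak{m}_e$ is forced to be a scalar multiple of $\mathrm{ev}_e$ (to nail down $\mathrm{Dist}\, G = k$ and not just $\mathrm{Dist}\, G \subseteq k$). Both are immediate from $k[G]/\mathfrak{m}_e = k$.
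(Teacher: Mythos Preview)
Your proposal is correct and matches the paper's own proof, which simply cites Lemma~\ref{LemBasic}(2) and~(3) as an immediate consequence. The only superfluous step is the appeal to Lemma~\ref{LemBasic}(4): the ideal $\mathfrak{m}_e$ is maximal (being the kernel of the counit $k[G]\to k$), hence radical without any reducedness assumption.
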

\begin{proof}
This is an immediate consequence of Lemma~\ref{LemBasic}(2) and (3).
\end{proof}

\begin{theorem}\label{PerfAlgGr}
Let $\bG$ be a perfect affine group scheme over $k$. The following are equivalent:
\begin{enumerate}
\item[(a)] The scheme $\bG$ is of perfect finite type, {\it i.e.} $k[\bG]$ is perfectly finitely generated;
\item[(b)] The group scheme $\bG$ is a subgroup of $GL(V)_{\perf}$ for a finite-dimensional vector space~$V$;
\item[(c)] There exists an affine group scheme $G$ of finite type with $\bG\cong G_{\perf}$;
\item[(d)] There exists a reduced affine group scheme $G$ of finite type with $\bG\cong G_{\perf}$.
\end{enumerate}
\end{theorem}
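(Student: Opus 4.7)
The plan is to close the cycle (d) $\Rightarrow$ (c) $\Rightarrow$ (b) $\Rightarrow$ (a) $\Rightarrow$ (d). Of these, (d) $\Rightarrow$ (c) is immediate. For (c) $\Rightarrow$ (b), I would invoke the classical fact that every affine group scheme $G$ of finite type over a field embeds as a closed subgroup of some $GL(V)$ (Waterhouse); since perfection is a filtered colimit on coordinate rings, the surjection $k[GL(V)] \twoheadrightarrow k[G]$ remains surjective after perfecting, so $\bG \cong G_{\perf} \hookrightarrow GL(V)_{\perf}$ is a closed embedding.

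For (b) $\Rightarrow$ (a), write $k[\bG] = k[GL(V)]_{\perf}/J$ with $J$ a Hopf ideal, radical because $k[\bG]$ is perfect. Since $k[GL(V)]$ is finitely generated, say by $T$, a direct check using Lemma~\ref{LemFinGen} shows that the image $\bar T$ perfectly generates $k[\bG]$: any $\bar y \in k[\bG]$ lifts to some $y \in k[GL(V)]_{\perf}$ whose $p^n$-th power lies in $k[T]$ for some $n$, so $\bar y^{p^n} \in k[\bar T]$, as required.

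The main content lies in (a) $\Rightarrow$ (d). Here I would combine the perfect finite generation with the classical Hopf-algebraic fact that every commutative Hopf algebra over a field is the directed union of its finitely generated Hopf subalgebras. Writing $k[\bG] = \bigcup_i H_i$ and choosing a finite perfect generating set $S \subset k[\bG]$ from Lemma~\ref{LemFinGen}, we may enlarge $i$ so that $S \subset H_i$. Since $k$ is perfect, the nilradical of $H_i$ is a Hopf ideal, so $H_i^{\red}$ is again a finitely generated Hopf algebra, reduced by construction. The inclusion $H_i^{\red} \hookrightarrow k[\bG]$ factors by the universal property of perfection through $(H_i^{\red})_{\perf} \to k[\bG]$; this map is injective because $H_i^{\red}$ is reduced (so its Frobenius colimit injects into the perfect algebra $k[\bG]$) and surjective because its image contains all $p^n$-th roots of $S$, which perfectly generate $k[\bG]$. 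Setting $G := \Spec H_i^{\red}$ then gives a reduced finite-type affine group scheme with $G_{\perf} \cong \bG$, using Lemma~\ref{LemBasic}(4) if needed to identify $(H_i^{\red})_{\perf}$ with $(H_i)_{\perf}$.

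The principal obstacle is the last implication: one must produce a Hopf-compatible finite-type model from purely algebra-theoretic perfect finite generation. The two decisive ingredients are (i) the directed-union decomposition of commutative Hopf algebras, which refines an arbitrary $k$-algebra generating set to one contained in a finitely generated Hopf subalgebra; and (ii) the compatibility of reduction with the Hopf structure, which crucially uses that $k$ is perfect. The remaining implications are essentially formal given Lemmas~\ref{LemFinGen} and \ref{LemBasic} and the monomorphism/epimorphism properties of perfection established in Proposition~\ref{PropEpi}.
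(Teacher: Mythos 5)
Your proof is correct and follows essentially the same route as the paper: (d)\,$\Rightarrow$\,(c) is trivial, (c)\,$\Rightarrow$\,(b) via embedding into $GL(V)$, (b)\,$\Rightarrow$\,(a) by noting a quotient of a perfectly finitely generated algebra is perfectly finitely generated, and (a)\,$\Rightarrow$\,(d) by enlarging the perfect generating set to one spanning a finitely generated Hopf subalgebra. The only difference is cosmetic: you invoke the directed-union decomposition of commutative Hopf algebras, whereas the paper cites the equivalent statement (Abe, Lemma 3.4.5) that any finite subset can be enlarged to a finite subset generating a Hopf subalgebra. One small redundancy in your argument: there is no need to pass to $H_i^{\red}$ or to invoke that $k$ is perfect at that point, since $H_i$ is a subalgebra of $k[\bG]$, which is reduced (Lemma~\ref{LemBasic}(4)), hence $H_i$ is automatically reduced.
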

\begin{proof}
Clearly (d) implies (c). Any affine group scheme $G$ of finite type is a subgroup of some $GL(V)$, see e.g.~\cite[Corollary~2.5]{DM}. It follows immediately that $G_{\perf}<GL(V)_{\perf}$, so (c) implies (b). That (b) implies (a) follows by the observation that $k[\bG]$ is a quotient of the perfectly finitely generated algebra $\varinjlim k[GL(V)]$.

Finally, we prove that (a) implies (d). Let $S$ be a finite set which perfectly generates $k[\bG]$ as in Lemma~\ref{LemFinGen}(a). { It is possible to replace $S$ by a finite set $S_1\supset S$ such that the subalgebra that is generated (in the ordinary, non-perfect sense) by $S_1$ is actually a Hopf subalgebra of $k[\bG]$, see \cite[Lemma~3.4.5]{Ab}.} Let $G$ be the corresponding affine group scheme. By construction $G_{\perf}\cong \bG$ and, since $k[G]$ is a subalgebra of $k[\bG]$, it follows that~$G$ is reduced.
\end{proof}

\begin{lemma}\label{LemGHperf}
Let $\bG$ be a perfect affine group scheme and $H$ an affine group scheme. Then $H_{\perf}\to H$ induces an isomorphism
$$\Hom(\bG, H_{\perf})\;\xrightarrow{\sim}\;\Hom(\bG,H),$$
with inverse given by perfection.
Moreover, if $H$ is of finite type and $G$ an affine group scheme, then
$$\Hom(G_{\perf}, H_{\perf})\;\xrightarrow{\sim}\;\varinjlim\Hom(G^{(-i)},H).$$
\end{lemma}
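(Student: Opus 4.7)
\medskip

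The plan is to reduce both isomorphisms to the algebra-level statement of Remark~\ref{remaffsch} together with the observation that $k[G_{\perf}] = \varinjlim k[G^{(-i)}]$ (see \ref{relative2}).

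For the first isomorphism, write $\bG = \Spec \bA$ with $\bA$ a perfect Hopf algebra, and $H = \Spec B$ so $H_{\perf} = \Spec B_{\perf}$. Passing to commutative Hopf algebras, the map in question becomes the precomposition
$$
\Hom_{\text{Hopf}}(B_{\perf}, \bA) \;\longrightarrow\; \Hom_{\text{Hopf}}(B, \bA)
$$
along the canonical Hopf algebra map $B \to B_{\perf}$. By Remark~\ref{remaffsch} (applied to $\bA$, which is perfect), the corresponding map of $\Alg_k$-morphism sets is a bijection; its inverse sends an algebra map $f:B\to\bA$ to $\varinjlim f$. Since perfection is functorial and the Hopf structure on $B_{\perf}$ is obtained from the Hopf structure on $B$ by applying this same colimit construction, the bijection descends to Hopf algebra maps. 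Equivalently, the candidate inverse $f \mapsto f_{\perf}$ (well-defined because $(\bG)_{\perf} \cong \bG$ by perfectness and because $-_{\perf}$ commutes with products, so preserves group objects) is natural both ways, and the two compositions are the identity by a short functoriality check using $(H_{\perf})_{\perf} \cong H_{\perf}$ and naturality of $H_{\perf}\to H$.

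For the second isomorphism, first apply the first part with $\bG = G_{\perf}$ to identify
$$
\Hom(G_{\perf}, H_{\perf}) \;\cong\; \Hom(G_{\perf}, H).
$$
Now use the description from \ref{relative2}: at the level of Hopf algebras, $k[G_{\perf}] = \varinjlim_i k[G^{(-i)}]$, with transition maps coming from the relative Frobenius \eqref{eqRelFr}. A group scheme morphism $G_{\perf}\to H$ is exactly a Hopf algebra map $k[H] \to \varinjlim_i k[G^{(-i)}]$. Because $H$ is of finite type, $k[H]$ is finitely generated as a $k$-algebra, so any algebra map into the filtered colimit factors through some $k[G^{(-i)}]$, and the induced compatibility conditions (comultiplication, counit, antipode) likewise descend to a sufficiently large stage since they are finite diagrams involving finitely many generators. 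This gives
$$
\Hom_{\text{Hopf}}(k[H], \varinjlim_i k[G^{(-i)}]) \;\cong\; \varinjlim_i \Hom_{\text{Hopf}}(k[H], k[G^{(-i)}]) \;=\; \varinjlim_i \Hom(G^{(-i)}, H),
$$
which is the desired identification.

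The mildly subtle step is the compatibility of the colimit with the Hopf structure in the second part: one needs that commutativity of the Hopf diagrams, which holds in the colimit, can be realised at some finite stage. This follows from finite generation of $k[H]$ (so only finitely many generator-level equations must hold) together with the fact that a filtered colimit of sets detects equalities stably at a finite stage; no genuine obstacle arises. The first part is essentially formal once one notices that perfection is right adjoint to the inclusion of perfect affine group schemes into all affine group schemes, which is precisely the content of Remark~\ref{remaffsch}.
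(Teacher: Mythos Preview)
Your proof is correct and follows essentially the same approach as the paper. The paper's own proof is a single sentence citing Remark~\ref{remaffsch} and the right-adjoint property of perfection; you have simply unpacked this, making explicit why the algebra-level bijection respects the Hopf structure and why the finite-type hypothesis on $H$ is needed in the second part (to factor maps into the filtered colimit through a finite stage).
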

\begin{proof}
This is an immediate application of Remark~\ref{remaffsch}, or the fact that the perfection functor on $\mF_p$-schemes is right adjoint to the inclusion functor for perfect schemes.
\end{proof}

\subsubsection{}
For a subgroup $H$ of an affine group scheme $G$, we denote by $G/H$, when it exists, the equaliser of $G\times H\rightrightarrows G$ in $\Sch_k$.

Recall from \cite[III, \S3 Th\'eor\`eme 5.4]{DG} that for $G$ of finite type, the quotient $G/_1H$ in $\Fais_k$ is a scheme, of finite type over $k$, so in particular is equal to $G/H$.

\begin{theorem}\label{ThmSub}
\begin{enumerate}
\item For every perfect subgroup $\bH$ of an affine group scheme $\bG$ of perfect finite type, the quotient $\bG/\bH$ exists, is of perfect finite type and isomorphic to $\bG/_1\bH$.

\item For an affine group scheme $G$, every perfect subgroup of $G_{\perf}$ is the perfection of a subgroup of $G$. More precisely, every perfect (normal) subgroup of $G_{\perf}$ is the perfection of a reduced (normal) subgroup of $G_{\red}<G$.

\item For an affine group scheme $G$ of finite type with subgroup $H$, the quotient $G_{\perf}/H_{\perf}$ exists and is isomorphic to $(G/H)_{\perf}$ and $G_{\perf}/_1H_{\perf}$.
\end{enumerate}
\end{theorem}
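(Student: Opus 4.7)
The plan is to establish the three parts in the order (3), (2), (1), since (1) is a formal consequence of the other two.

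For (3), the right translation action of $H$ on $G$ is free, so Theorem~\ref{ThmQuot1} yields an isomorphism $G_{\perf}/_1 H_{\perf} \xrightarrow{\sim} (G/_1 H)_{\perf}$ in $\Fais_k$. Since $G$ is of finite type, $G/H = G/_1 H$ is a scheme of finite type by the cited result of \cite{DG}, hence $(G/H)_{\perf}$ is of perfect finite type. As $G_{\perf}/_1 H_{\perf}$ is isomorphic in $\Fais_k$ to a scheme, it itself lies in $\Sch_k$ and represents the scheme-theoretic equaliser $G_{\perf}/H_{\perf}$.

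For (2), I would first reduce to the case where $G$ is reduced: by Lemma~\ref{LemBasic}(4) one has $G_{\perf} \cong (G_{\red})_{\perf}$, and $G_{\red}$ is a subgroup of $G$ because $k$ is perfect. Assuming $G$ reduced, Frobenius is injective on $k[G]$, so $k[G] \hookrightarrow k[G_{\perf}] = k[G]_{\perf}$ is an inclusion of Hopf algebras. Given a perfect subgroup $\bH$ of $G_{\perf}$ corresponding to a Hopf ideal $I \subset k[G]_{\perf}$, set $J := I \cap k[G]$, a Hopf ideal in $k[G]$, and define $H := \Spec(k[G]/J)$. The natural inclusion $k[G]/J \hookrightarrow k[G]_{\perf}/I = k[\bH]$ lands in a perfect ring; the key check is that it identifies $k[\bH]$ with $(k[G]/J)_{\perf}$. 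Surjectivity rests on the fact that any $b \in k[\bH]$ lifts to some $\tilde b \in k[G]_{\perf}$ with $\tilde b^{p^n} \in k[G]$, so $\tilde b^{p^n} \bmod J \in k[G]/J$ has $p^n$-th root $b$ inside the perfection; injectivity follows from $J = I \cap k[G]$. Reducedness of $H$ is immediate from the embedding into the perfect algebra $k[\bH]$. For the normal case, if $c_\infty : k[G]_{\perf} \to k[G]_{\perf}^{\otimes 2}$ denotes the conjugation comorphism and $c_\infty(I) \subset k[G]_{\perf} \otimes I$, then for $f \in J \subset I$ one has $c(f) \in k[G]^{\otimes 2}$ mapping to $0$ in $k[G]_{\perf} \otimes k[\bH]$; exploiting the injection $k[G] \otimes k[H] \hookrightarrow k[G]_{\perf} \otimes k[\bH]$ (provided by $k[H] \hookrightarrow k[\bH]$) this forces $c(J) \subset k[G] \otimes J$, so $H$ is normal in $G$.

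For (1), by Theorem~\ref{PerfAlgGr}(d) I write $\bG = G_{\perf}$ for some reduced affine group scheme $G$ of finite type; by part~(2), $\bH = H_{\perf}$ for a subgroup $H < G$; by part~(3), the quotient $\bG/\bH$ exists, is of perfect finite type, is isomorphic to $(G/H)_{\perf}$, and coincides with $G_{\perf}/_1 H_{\perf}$. The main obstacle lies in (2): confirming that the passage $I \mapsto I \cap k[G]$ and then perfection really invert one another, together with the Hopf-algebraic transfer of normality. Once (2) and (3) are secured, (1) is a routine combination, and (3) is itself a direct consequence of Theorem~\ref{ThmQuot1} upon noting the freeness of the translation action.
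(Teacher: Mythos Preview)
Your proposal is correct and follows the same overall architecture as the paper: (3) via Theorem~\ref{ThmQuot1} and freeness of the translation action, (2) independently, and (1) as the formal combination. For part~(2) you work explicitly with Hopf ideals, taking $J = I \cap k[G]$ and verifying $(k[G]/J)_{\perf} \cong k[\bH]$ by hand; the paper instead phrases the same construction group-theoretically, defining $L$ as the image of $\bH \hookrightarrow G_{\perf} \to G_{\red}$ and invoking Lemma~\ref{LemGHperf} to see that perfecting $\bH \twoheadrightarrow L \hookrightarrow G_{\red}$ recovers $\bH \hookrightarrow G_{\perf}$, whence $\bH \cong L_{\perf}$. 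Your $H$ is exactly their $L$ (since $k[L]$ is the image of $k[G] \to k[\bH]$, i.e.\ $k[G]/J$), so the two arguments are the same in content; the paper's version is slightly slicker in that the perfection check and the normality transfer are absorbed into the general adjunction of Lemma~\ref{LemGHperf} and the functoriality of taking images, whereas you carry out both verifications directly at the level of comodule algebras.
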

\begin{proof}
We will freely use the results from \cite{DG} recalled above.
Part (1) is then an immediate consequence of parts (2) and (3). 

Now we prove part (2). Take a perfect subgroup $\bH<G_{\perf}$. We have a commutative square, where $\hookrightarrow$ denotes the inclusion of a subgroup and $\tto$ denotes a faithfully flat homomorphism
$$\xymatrix{
\bH\ar@{^{(}->}[rr]\ar@{->>}[d]&& G_{\perf}\ar@{->>}[d]\\
L\ar@{^{(}->}[rr]&& G_{\red},
}$$
where $L$ is just defined to be the image of the composite diagonal homomorphism. By Lemma~\ref{LemGHperf}, perfecting the lower path in the square yields homomorphisms $\bH\tto L_{\perf}\hookrightarrow G_{\perf}$ which compose to the original inclusion $\bH\to G_{\perf}$. Clearly, $\bH\tto L_{\perf}$ must be an isomorphism. If $\bH$ is a normal subgroup, it follows
easily that so is $L<G_{\red}$.

Part (3) is an application of Theorem~\ref{ThmQuot1}.
\end{proof}

\begin{remark}
A perfect group scheme can have non-perfect subgroups, for instance 
$$\mu_{p^\infty}:=\varprojlim \mu_{p^i},\quad i.e. \;\;k[\mu_{p^\infty}]=k[x^{1/p^\infty}]/(x-1)$$ is a subgroup of $(\mG_m)_{\perf}$. {  Here we use the convention
$$k[x^{1/p^\infty}]\;=\; k[x,x^{1/p},x^{1/p^2},\cdots]=\cup_i k[x^{1/p^i}].$$} Moreover, $(\mG_m)_{\perf}/\mu_{p^\infty}\cong \mG_m$.
\end{remark}

For our purposes it is most convenient to define short exact sequences of affine group schemes as those sequences $N\to G\to Q$ in which $G\to Q$ is faithfully flat and $N$ is the kernel of the latter morphism. 
\begin{lemma}\label{CorSES}
The perfection functor acting on a short exact sequence of affine group schemes
$$1\to N\to G\to Q\to 1,$$
yields a short exact sequence
$$1\to N_{\perf}\to G_{\perf}\to Q_{\perf}\to 1.$$
\end{lemma}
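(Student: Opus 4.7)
The plan is to verify directly the two defining conditions of a short exact sequence of affine group schemes for the perfected sequence: that $N_{\perf}=\ker(G_{\perf}\to Q_{\perf})$, and that $G_{\perf}\to Q_{\perf}$ is faithfully flat.

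For the kernel condition, I would invoke that by the construction in \ref{relative2} the perfection functor is itself an inverse limit along the Frobenius morphism, hence commutes with arbitrary limits in $\Fais_k$, and in particular with fibre products. Since $N=G\times_Q\Spec k$ as sheaves, applying perfection gives $N_{\perf}\cong G_{\perf}\times_{Q_{\perf}}\Spec k=\ker(G_{\perf}\to Q_{\perf})$, as desired. In particular $N_{\perf}$ is automatically a closed (and normal) subgroup of $G_{\perf}$.

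For faithful flatness, I would appeal to the fact that perfection of $\mF_p$-algebras preserves faithful flatness of morphisms, so that the Hopf algebra map $k[Q]\to k[G]$ being faithfully flat implies the same for $k[Q]_{\perf}\to k[G]_{\perf}$ (compare \cite[Lemma~3.4]{BS}); this yields the faithful flatness of $G_{\perf}\to Q_{\perf}$. A self-contained alternative is to use that the assumption of faithful flatness means $Q$ represents the sheaf quotient $G/_1N$ for the free right translation action of $N$ on $G$; Theorem~\ref{ThmQuot1} then yields $Q_{\perf}\cong G_{\perf}/_1 N_{\perf}$, and the faithful flatness of this quotient map can be extracted by the same descent arguments appearing in the proofs of Theorem~\ref{ThmQuot1} and Theorem~\ref{ThmSub}(3).

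The main obstacle is exactly this preservation of faithful flatness under perfection; everything else is formal. Once that input is granted, combining it with the limit-preservation gives the full short exact sequence.
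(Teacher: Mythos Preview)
Your proposal is correct and matches the paper's own proof essentially verbatim: the paper also cites \cite[Lemma~3.4]{BS} for preservation of faithful flatness, and handles the kernel by noting that inverse limits (hence perfection) respect kernels. Your additional remarks (the explicit fibre-product formulation and the alternative via Theorem~\ref{ThmQuot1}) are sound but go beyond what the paper records.
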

\begin{proof}
Faithful flatness is preserved by perfection, see \cite[Lemma~3.4]{BS}. Taking inverse limits of affine group schemes always respects kernels.
\end{proof}

\begin{corollary}\label{solv}
A reduced affine group scheme $G$ is solvable if and only if $G_{\perf}$ is solvable.
\end{corollary}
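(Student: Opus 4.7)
The plan is to prove the stronger statement that perfection commutes with forming the derived subgroup, namely $[G,G]_{\perf}=[G_{\perf},G_{\perf}]$ as subgroups of $G_{\perf}$. Once this is established, the Corollary follows by iteration together with the observation that, for a reduced affine group scheme $H$, triviality of $H_{\perf}$ is equivalent to triviality of $H$: the natural map $k[H]\to k[H]_{\perf}=\varinjlim k[H]$ is injective whenever $k[H]$ is reduced, since an element mapping to $0$ in the direct limit satisfies $a^{p^n}=0$ for some $n$; so $k[H]_{\perf}=k$ forces $k[H]=k$.

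The forward inclusion $[G_{\perf},G_{\perf}]\subseteq[G,G]_{\perf}$ is the easy half: the quotient $G/[G,G]$ is abelian, so its perfection is again abelian (perfection commutes with products and preserves group laws), and by Theorem~\ref{ThmSub}(3) this perfection is identified with $G_{\perf}/[G,G]_{\perf}$, which gives the desired inclusion.

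For the reverse inclusion I would invoke Theorem~\ref{ThmSub}(2) to write the perfect normal subgroup $[G_{\perf},G_{\perf}]\subseteq G_{\perf}$ as $M_{\perf}$ for a reduced normal subgroup $M$ of $G$. Then $G_{\perf}/M_{\perf}\cong(G/M)_{\perf}$ is abelian by construction, and the technical heart of the argument is to bootstrap this to the abelianness of $G/M$ itself. Since $k$ is perfect and $G$ reduced, $G$ is smooth and hence so is $G/M$; in particular $G/M$ is reduced of finite type, so the commutator morphism $c\colon(G/M)\times(G/M)\to G/M$ is a morphism between reduced schemes. The hypothesis says $c_{\perf}$ is the constant map at the identity, and the injectivity of $k[G/M]\to k[G/M]_{\perf}$ then forces $c^{\ast}$ itself to factor through $k\hookrightarrow k[G/M]$, i.e.\ $c$ is constant at $e$ and $G/M$ is abelian. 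This gives $[G,G]\subseteq M$, whence $[G,G]_{\perf}\subseteq M_{\perf}=[G_{\perf},G_{\perf}]$.

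Iterating the commutation identity yields $(G_{\perf})^{(n)}=(G^{(n)})_{\perf}$ for every term of the derived series, and the Corollary follows: $G^{(n)}=1$ implies $(G_{\perf})^{(n)}=1$, while $(G_{\perf})^{(n)}=1$ means $(G^{(n)})_{\perf}=1$, and hence (by the initial observation, applied to the reduced subgroup $G^{(n)}$) $G^{(n)}=1$. The \emph{main obstacle} will be the bootstrap step from $(G/M)_{\perf}$ abelian to $G/M$ abelian, which crucially exploits the reducedness of $G$ over the perfect field $k$; everything else is formal manipulation of the results from Section~\ref{PerfGrp}.
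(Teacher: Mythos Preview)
Your argument is correct and runs parallel to the paper's proof: both directions ultimately rest on Theorem~\ref{ThmSub}(2) together with the bootstrap that a reduced group scheme with abelian perfection is itself abelian. The paper is slightly more economical in that it does not isolate the identity $[G,G]_{\perf}=[G_{\perf},G_{\perf}]$; for the forward direction it simply invokes Lemma~\ref{CorSES} (perfection preserves short exact sequences, hence any subnormal series with abelian quotients survives), and for the converse it lifts the derived series of $G_{\perf}$ term by term via Theorem~\ref{ThmSub}(2) and applies the bootstrap to the successive quotients. Your sharper statement that perfection commutes with the derived subgroup is a pleasant byproduct, though not needed for the corollary itself.

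Two small gaps are worth flagging. First, before invoking Theorem~\ref{ThmSub}(2) you must know that $[G_{\perf},G_{\perf}]$ is a \emph{perfect} scheme; this holds because the derived subgroup is characteristic and Frobenius is an automorphism of $G_{\perf}$, but it deserves a sentence (recall that arbitrary closed subgroups of perfect groups need not be perfect, e.g.\ $\mu_{p^\infty}\subset\bG_m$). Second, your appeal to smoothness and to Theorem~\ref{ThmSub}(3) both presuppose that $G$ is of finite type, which is not part of the hypothesis. The bootstrap survives without this: since $G\times G$ is reduced (here one uses that $k$ is perfect), the commutator morphism $G\times G\to G/M$ becomes constant after perfection, and injectivity of $k[G\times G]\hookrightarrow k[G\times G]_{\perf}$ already forces the original morphism to be constant, giving $[G,G]\subset M$; likewise Lemma~\ref{CorSES} can replace your use of Theorem~\ref{ThmSub}(3).
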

\begin{proof}
Lemma~\ref{CorSES} shows that for any solvable affine group scheme, its perfection is again solvable. On the other hand, assume that $G_{\perf}$ is solvable and $G$ reduced. Applying iteratively Theorem~\ref{ThmSub}(2) allows us to construct a finite chain of reduced normal subgroups such that the perfection of the quotients are abelian. A reduced affine group scheme with abelian perfection is clearly abelian itself. 
\end{proof}

\subsection{Isomorphic perfections}

We gather some examples and results about affine group schemes with isomorphic perfections.
\begin{example} \label{Ex1Perf}
\begin{enumerate}
\item Let $G$ be a finite group scheme  ({\it i.e.} $k[G]$ is finite dimensional). We have $G_{\perf}\cong G_{\red}$, so in particular, $G$ is infinitesimal if and only if $G_{\perf}$ is trivial.
\item Reduced affine group schemes can also become isomorphic after perfection. For instance, if $q$ is a power of $p$, then $(SL_q)_{\perf}\cong (PGL_q)_{\perf}$. This is an example of Lemma~\ref{LemIsog} below, or follows from \ref{Idea} below and Example~\ref{ExampRoot}. Moreover, the latter results also show that, conversely, $(SL_n)_{\perf}\cong (PGL_n)_{\perf}$ implies that $n$ must be a power of $p$.
\item As follows from Remark~\ref{remaffsch}, a necessary condition for affine group schemes $G,H$ to have isomorphic perfections is that there exists an isomorphism $G(k)\cong H(k)$ as abstract groups.
\end{enumerate}
\end{example}

Recall that an {\bf isogeny} is a faithfully flat homomorphism of affine group schemes with finite kernel $N$. An isogeny is {\bf infinitesimal} if $N$ is infinitesimal. An isogeny $G\to Q$ between reduced and connected affine group schemes is {\bf purely inseparable} if the induced morphism $k(Q)\to k(G)$ between the fields of fractions is a purely inseparable field extension.

\begin{lemma}\label{LemIsog}
For an isogeny $q:G\to Q$, the following conditions are equivalent:
\begin{enumerate}
\item[(a)] $q$ is infinitesimal;
\item[(b)] The perfection of $q$ is an isomorphism.
\end{enumerate}
Moreover, if $G,Q$ are reduced and connected, the above properties {  are equivalent to}
\begin{enumerate}
\item[(c)] $q$ is purely inseparable.
\end{enumerate}
\end{lemma}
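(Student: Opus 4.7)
The plan is to treat the equivalence (a) $\Leftrightarrow$ (b) as a direct application of the machinery already set up, and to handle the extra implication (a) $\Rightarrow$ (c) via a torsor/function field argument.

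For (a) $\Rightarrow$ (b), I would start from the short exact sequence $1 \to N \to G \to Q \to 1$ defining the isogeny and apply Lemma~\ref{CorSES} to get
$$1 \to N_{\perf} \to G_{\perf} \to Q_{\perf} \to 1.$$
By Example~\ref{Ex1Perf}(1), we have $N_{\perf} \cong N_{\red}$, so $N$ infinitesimal forces $N_{\perf}$ to be trivial, and a faithfully flat homomorphism with trivial kernel is an isomorphism. For (b) $\Rightarrow$ (a), the same perfected sequence shows $q_{\perf}$ is an isomorphism iff $N_{\perf}$ is trivial, and again by Example~\ref{Ex1Perf}(1) this happens iff $N_{\red}$ is trivial, i.e.\ $N$ is infinitesimal.

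The substantive step is (a) $\Rightarrow$ (c). Assume $G,Q$ reduced and connected (hence integral), and $N$ infinitesimal. Since $q$ is faithfully flat with finite kernel $N$, it is an fppf $N$-torsor, so
$$G \times_Q G \;\cong\; G \times_k N.$$
I would then localise at the generic point of $Q$ (and pass to the function field of $G$, which is a localisation of $k(Q) \otimes_{k[Q]} k[G]$) to obtain
$$k(G) \otimes_{k(Q)} k(G) \;\cong\; k(G) \otimes_k k[N].$$
Because $N$ is infinitesimal, $k[N]$ is a finite-dimensional local $k$-algebra whose maximal ideal is nilpotent; consequently $k(G) \otimes_k k[N]$ is local (its nilradical is the ideal generated by the maximal ideal of $k[N]$, with quotient the field $k(G)$). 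Hence $k(G) \otimes_{k(Q)} k(G)$ is local, and I would invoke the standard characterisation that a finite field extension $L/K$ is purely inseparable if and only if $L \otimes_K L$ is a local ring, to conclude that $k(Q) \hookrightarrow k(G)$ is purely inseparable.

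The main obstacle is checking the torsor isomorphism descends correctly to the function field level (one must verify that the two generic points identify and that no extra components appear); after that, the algebra is routine. One could alternatively argue (a) $\Rightarrow$ (c) by observing that, after base change to the algebraic closure, an infinitesimal group scheme is a successive extension of $\alpha_p$'s and $\mu_p$'s, and a quotient by either of these produces a height-one purely inseparable extension on function fields, then composing. I prefer the torsor argument since it avoids reduction to the algebraic closure and extra case analysis.
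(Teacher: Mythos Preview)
Your treatment of (a) $\Leftrightarrow$ (b) matches the paper's proof exactly: both invoke Lemma~\ref{CorSES} and Example~\ref{Ex1Perf}(1).

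For the implication to (c), your torsor argument is correct, but the paper takes a much shorter route that stays within the perfection framework. The paper argues directly from (b): since $q_{\perf}$ is an isomorphism, the induced map $k[Q]_{\perf}\to k[G]_{\perf}$ is an isomorphism, which says precisely that for every $a\in k[G]$ there exists $i\in\mN$ with $a^{p^i}$ in the image of $k[Q]\to k[G]$; this is the definition of $k(Q)\hookrightarrow k(G)$ being purely inseparable. Your approach, by contrast, works entirely on the non-perfected side via the fppf torsor isomorphism $G\times_Q G\cong G\times_k N$, localisation to function fields, and the criterion that $L/K$ is purely inseparable iff $L\otimes_K L$ is local. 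This is a perfectly valid and self-contained classical argument, and has the minor conceptual advantage of not routing through (b); but it is considerably longer and requires checking the passage to generic points (which you flag). Given that the whole paper is about perfection, the one-line argument from (b) is more natural here.
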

\begin{proof}
The equivalence between (a) and (b) is an immediate application of Example~\ref{Ex1Perf}(1) and Lemma~\ref{CorSES}.

Condition (b) implies that for every $a\in k[G]$, there is $i\in\mN$ such that $a^{p^i}$ is in the image of $k[Q]\to k[G]$, from which (c) follows immediately. { Conversely, that (c) implies (b) follows similarly, by exploiting the equality 
$$k[Q]\;=\;k(Q)\cap k[G],\qquad\mbox{inside
$k(G)$.}$$}

{ To prove the displayed equality (of which the inclusion is obvious), we consider one $f\in k(G)$ which belongs to $k(Q)\subset k[G]$. By viewing $G$ as the inverse limit of quotient group schemes of finite type $\varprojlim G_\alpha$ (which induces $Q\cong \varprojlim Q_\alpha$), we can easily reduce to the case where $G$ and $Q$ are of finite type, by taking $\alpha$ for which $f\in k[G_\alpha]\subset k[G]$ as well as $f\in k(Q_\alpha)\subset k(G_\alpha)$.}

{  Assume thus that $G$ is of finite type and consider the span $S$ of $\{g(f)\,|\, g\in G(k)\}$. This is the $G(k)$-subrepresentation of the rational left regular representation $k[G]$, in particular $S$ is finite dimensional. Clearly the action of $G(k)$ on $f$
factors through the canonical action of $Q(k)$ on $k(Q)$ and by our finite type assumption we have $G(k)\tto Q(k)$. Hence the elements $h\in k[Q]$ for which $hS\subset k[Q]$ form a non-zero (by finite dimensionality of $S$) $Q(k)$-invariant ideal $I<k[Q]$ and hence $I=k[Q]$. So $f\in S\subset k[Q]$ as desired. 

Note that an alternative argument considers the purely inseparable isogeny $G\to G/(\ker q)^0$, which is also \'etale and therefore an isomorphism.}
\end{proof}

\begin{prop}\label{PropIso}
The following conditions are equivalent on two reduced affine group schemes $G,H$ of finite type.
\begin{enumerate}
\item[(a)] $G_{\perf}\cong H_{\perf}$;
\item[(b)] There exists $j\in\mN$ and an infinitesimal isogeny $G\to H^{(j)}$;
\item[(c)] There are $i,j\in\mN$ and homomorphims $\phi: G\to H^{(j)}$, $\psi: H\to G^{(i)}$ for which the following triangles are commutative 
$$\xymatrix{
G\ar[r]^-{\phi}\ar[rd]_-{\Fr^{i+j}}&H^{(j)}\ar[d]^-{\psi^{(j)}}&&H\ar[r]^-{\psi}\ar[rd]_-{\Fr^{i+j}}&G^{(i)}\ar[d]^-{\phi^{(i)}}\\
&G^{(i+j)}&& &H^{(i+j)}.
}$$

\end{enumerate}
\end{prop}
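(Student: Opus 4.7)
The plan is to establish the cycle (b)$\Rightarrow$(a)$\Rightarrow$(c)$\Rightarrow$(b), using Lemma~\ref{LemGHperf} as the principal lever in the nontrivial steps. The implication (b)$\Rightarrow$(a) is immediate: the Frobenius $\Fr^j:H\to H^{(j)}$ perfects to an isomorphism $H_{\perf}\xrightarrow{\sim}H^{(j)}_{\perf}$, and by Lemma~\ref{LemIsog} the infinitesimal isogeny $G\to H^{(j)}$ also perfects to an isomorphism, so composing gives $G_{\perf}\cong H_{\perf}$.

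For (a)$\Rightarrow$(c), I would start from $f:G_{\perf}\xrightarrow{\sim}H_{\perf}$ with inverse $f^{-1}$ and apply Lemma~\ref{LemGHperf} to find honest morphisms $\phi_0:G^{(-j)}\to H$ and $\psi_0:H^{(-i)}\to G$ representing $f$ and $f^{-1}$ respectively. The Frobenius-twist functor is an equivalence (as $k$ is perfect), so these transport to $\phi:G\to H^{(j)}$ and $\psi:H\to G^{(i)}$ whose perfections, after the canonical identifications $X^{(n)}_{\perf}\cong X_{\perf}$, still equal $f$ and $f^{-1}$. Consequently both $\psi^{(j)}\circ\phi$ and $\Fr^{i+j}$ are morphisms $G\to G^{(i+j)}$ whose perfections coincide, and similarly for $\phi^{(i)}\circ\psi$ compared with $\Fr^{i+j}$. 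Invoking Lemma~\ref{LemGHperf} a second time to unpack $\Hom(G_{\perf},G^{(i+j)}_{\perf})$ as a direct limit yields $n\in\mN$ with $\Fr^n\circ\psi^{(j)}\circ\phi=\Fr^{n+i+j}$, and after possibly enlarging $n$ the analogous identity for $\phi^{(i)}\circ\psi$ also holds. Replacing $\phi,\psi$ by $\Fr^n\circ\phi,\Fr^n\circ\psi$ and $i,j$ by $i+n,j+n$ then makes both commutative triangles in (c) hold on the nose.

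For (c)$\Rightarrow$(b), I will show that $\phi:G\to H^{(j)}$ is an infinitesimal isogeny. Its kernel sits inside $\ker(\psi^{(j)}\circ\phi)=\ker\Fr^{i+j}$, which is the $(i+j)$-th Frobenius kernel of $G$ and therefore infinitesimal. For faithful flatness, factor $\phi$ as $G\twoheadrightarrow\phi(G)\hookrightarrow H^{(j)}$ through the (automatically reduced) scheme-theoretic image. The relations in (c) force $\phi_{\perf}$ to be an isomorphism; since perfection preserves both epimorphisms and monomorphisms (Proposition~\ref{PropEpi}), the epi-mono factorisation $G_{\perf}\twoheadrightarrow\phi(G)_{\perf}\hookrightarrow H^{(j)}_{\perf}$ of this isomorphism forces $\phi(G)_{\perf}\hookrightarrow H^{(j)}_{\perf}$ to be an isomorphism too. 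Because perfection preserves underlying topological spaces (Lemma~\ref{LemBasic}(1)), the reduced closed subschemes $\phi(G)$ and $H^{(j)}$ of $H^{(j)}$ then share the same underlying set and must coincide. The main subtlety lies in (a)$\Rightarrow$(c): arranging the Frobenius twists so that both triangles commute simultaneously, not merely each in isolation; everything else reduces to systematic application of the lemmas already in place.
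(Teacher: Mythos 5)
Your proof matches the paper's for the implications $(b)\Rightarrow(a)$ (via Lemma~\ref{LemIsog}) and $(a)\Rightarrow(c)$ (descend the isomorphism of perfections via Lemma~\ref{LemGHperf} to honest homomorphisms $\phi,\psi$, then post-compose with a common $\Fr^n$ so that both triangles commute simultaneously; the paper writes out only one of the two relations before invoking cancellation by the faithfully flat $\Fr^l$, but the content is the same and your more explicit handling of the bookkeeping is fine). You diverge genuinely in $(c)\Rightarrow(b)$, in how you establish faithful flatness of $\phi$. The paper argues purely at the level of comorphisms: since $H$ is reduced, $\Fr^{i+j}\colon H\to H^{(i+j)}$ is faithfully flat, and as $\Fr^{i+j}=\phi^{(i)}\circ\psi$ the comorphism of $\phi^{(i)}$ must already be injective, so $\phi^{(i)}$ (hence $\phi$) is faithfully flat — a one-line cancellation. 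You instead factor $\phi$ through its scheme-theoretic image, perfect this epi--mono factorisation using Proposition~\ref{PropEpi}, and use that perfection preserves underlying topological spaces (Lemma~\ref{LemBasic}) together with reducedness to force the image to be all of $H^{(j)}$. Both arguments are correct; the paper's is shorter and stays inside elementary commutative algebra, while yours deploys the perfection machinery developed earlier and gives a more geometric explanation of why the two commuting triangles force surjectivity. Your treatment of the kernel, showing $\ker\phi\subset\ker\Fr^{i+j}$ is infinitesimal, agrees with the paper.
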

\begin{proof}
That (b) implies (a) is a special case of Lemma~\ref{LemIsog}. 

Now we prove that (c) implies (b).
The kernel of the Frobenius homomorphism is infinitesimal, hence the left diagram shows that the kernel of $\phi$ is infinitesimal. Since we assume that $G$ and $H$ are reduced, the Frobenius homomorphism is faithfully flat. The right diagram thus proves that $\phi$ is faithfully flat.

Applying Lemma~\ref{LemGHperf} to the isomorphism in (a) yields morphisms $\phi: G^{(-j)}\to H$ and $\psi: H^{(-i)}\to G$. Expressing that these induce mutually inverse homomorphisms on the perfected groups then states that there exists $l\in\mN$ such that the composition
$$H^{(-i-j-l)} \xrightarrow{\Fr^{l}} H^{(-i-j)}\xrightarrow{\psi^{(-j)} }G^{(-j)}\xrightarrow{\phi} H$$
is $\Fr^{i+j+l}$. Since $\Fr^l$ is faithfully flat, we arrive precisely at the conditions in (c), so (a) implies (c).
\end{proof}

\subsection{Tannakian point of view}

For an affine group scheme $G$ we denote by $\Rep G$ its category of (rational) representations which are finite dimensional over $k$. Its category of all representations will be denoted by $\Rep^\infty G\cong \Ind\Rep G$.

\subsubsection{} {Let us} interpret $G_{\perf}$ as
$\varprojlim G^{(-i)}$ as in \ref{relative2}. { We refer to \cite[\S 6]{CEO} for an overview of the notion of the direct limit of (tensor) categories. In particular, we have}
\begin{equation}\label{TannLim2}\Rep (G_{\perf})\;\cong\;\varinjlim \Rep G^{(-i)},\end{equation}
where the $k$-linear (exact) tensor functors in the chain are given by the pullback along $G^{(-i-1)}\to G^{(-i)}$.
These functors fit into commutative diagrams:
\begin{equation}\label{2twists}\xymatrix{
\Rep G\ar[r]\ar[dr]_{-^{(1)}}&\Rep G^{(-1)}\ar[d]^{\sim} && (V\to V\otimes k[G])\ar@{|->}[r]\ar@{|->}[rd]&(V\to V\otimes k[G]^{(-1)})\ar@{|->}[d]\\
&\Rep G&&&(V^{(1)}\to V^{(1)}\otimes k[G]) .
}\end{equation}
The non-horizontal arrows are only $k$-linear up to twist. By definition, $V\to V\otimes k[G]^{(-1)}$ comes from $k[G]\to k[G]^{(-1)}$ in \eqref{eqRelFr}. The downwards arrow is given by applying $-^{(1)}$ to both vector space and co-action. Note that we can equivalently
realise the $G$-representation $V^{(1)}$ from the bottom right in \eqref{2twists} as the subquotient of $\otimes^p V$ given by the
image of $\Gamma^pV\to S^p V$. This gives a more palatable
  definition of the Frobenius twist from the Tannakian point of view.

Diagram \eqref{2twists} allows us to realise $\Rep G_{\perf}$ alternatively as 
\begin{equation}\label{TannLim1}\Rep (G_{\perf})\;\cong\;\varinjlim \Rep G,
\end{equation} in the sprit of interpretation~\ref{relative1}. Compared to \eqref{TannLim2} we no longer have to work with twists of $G$, but we do have the drawback that the the defining functors are not $k$-linear.

\subsubsection{Notation}\label{notation}
For $M\in\Rep G$, we denote by $M[i]$ the object of $\varinjlim \Rep G$ where $M$ is placed in the $i$-th copy of $\Rep G$ in the chain, and use the same notation for the corresponding object in $\Rep(G_{\perf})$, via \eqref{TannLim1}. Note that every object in $\Rep (G_{\perf})$ is of this form and furthermore $M[i]\cong M^{(1)}[i+1]$.

\begin{lemma}\cite[Remark~6.5]{CEO}\label{LemTann}
Let $G$ be an affine group scheme over $k$.
\begin{enumerate}
\item $G$ is reduced if and only if $-^{(1)}: \Rep G\to \Rep G$ is full.
\item $G$ is perfect if and only if $-^{(1)}: \Rep G\to \Rep G$ is an equivalence.
\end{enumerate}
\end{lemma}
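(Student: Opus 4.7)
The plan is to identify the Frobenius twist $-^{(1)}$ with a pullback functor along the relative Frobenius, and then invoke standard Hopf-algebraic criteria for such pullbacks. Diagram~\eqref{2twists} exhibits $-^{(1)}$ as the composition
\[
\Rep G \xrightarrow{\;\Fr^{\ast}\;} \Rep G^{(-1)} \xrightarrow{\;\sim\;} \Rep G,
\]
where $\Fr^{\ast}$ is pullback along the relative Frobenius $\Fr:G^{(-1)}\to G$ of~\ref{relative2}, and the second arrow is the $k$-semilinear equivalence $V\mapsto V^{(1)}$ (an equivalence because $k$ is perfect). Hence fullness and essential surjectivity of $-^{(1)}$ coincide with those of~$\Fr^{\ast}$.

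Next I would invoke the standard principle that, for a homomorphism $f:H\to H'$ of affine group schemes over a field, $f^{\ast}:\Rep H'\to \Rep H$ is always faithful; it is fully faithful iff $f$ is faithfully flat, and in that case has essential image the representations on which $\ker f$ acts trivially. Faithful flatness of~$f$ is in turn equivalent to injectivity of the coordinate-ring map $f^{\ast}:k[H']\to k[H]$, by the theorem that a commutative Hopf algebra is faithfully flat over any Hopf subalgebra (Waterhouse). Applied to $\Fr:G^{(-1)}\to G$, the coordinate-ring map sends $\lambda\otimes a \mapsto \lambda a^p$ and its kernel is the nilradical, so it is injective iff $G$ is reduced.

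Part~(1) then follows immediately: since $\Fr^{\ast}$ is automatically faithful, the conditions \emph{full} and \emph{fully faithful} coincide for it, and we have just characterised the latter as reducedness of~$G$. For part~(2), an equivalence further requires essential surjectivity, i.e.\ that every representation of $G$ be trivial on~$\ker\Fr$; by testing against the regular representation $k[G]$, realised as a directed union of its finite-dimensional subrepresentations, this forces $\ker\Fr=1$. Hence $-^{(1)}$ is an equivalence iff $\Fr$ is simultaneously faithfully flat and a monomorphism, iff $\Fr$ is an isomorphism, iff $G$ is perfect. The only non-elementary input is the Waterhouse faithful-flatness theorem, which I would cite rather than reprove; everything else is a bookkeeping exercise around the factorisation above.
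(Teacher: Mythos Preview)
Your argument is correct, modulo one small imprecision: after identifying $k[G]^{(1)}\cong k[G]$ via perfectness of $k$, the coordinate-ring map becomes the $p$-th power map $a\mapsto a^p$, whose kernel $\{a:a^p=0\}$ is contained in but not generally equal to the nilradical. This does not affect the conclusion, since a commutative ring is reduced precisely when it has no nonzero element of $p$-th power zero.

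Your route for part~(1) differs from the paper's. You package the argument in the general Tannakian statement that $f^\ast$ is fully faithful iff $f$ is faithfully flat, and then invoke Waterhouse's theorem to identify faithful flatness with injectivity on coordinate rings. The paper instead argues by hand: from a failure of fullness one reduces (via $\Hom_G(V,W)\cong\Hom_G(k,V^\ast\otimes W)$) to a representation $V$ containing a vector $v$ which is not $G$-invariant but whose image $1\otimes v\in V^{(1)}$ is, and then reads off from the coaction a nonzero $f\in k[G]$ with $f^p=0$. The paper's argument is elementary and self-contained, while yours is more structural but imports a nontrivial theorem. For part~(2) the two proofs agree: both use diagram~\eqref{2twists} to reduce the question to $\Fr:G^{(-1)}\to G$ being an isomorphism.
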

\begin{proof}
If $G$ is reduced, the $p$-th power map is injective on $k[G]$ and the fullness in (1) follows. On the other hand, if $-^{(1)}$ is not full then (by applying adjunction) there is a $G$-representation $V$ with a vector $v\in V$ which is not $G$-invariant, but for which $1\otimes v\in V^{(1)}$ is $G$-invariant. Looking at the $k[G]$ coaction then provides a non-zero $f\in k[G]$ with $f^p=0$.

Via diagram \eqref{2twists}, the functor is an equivalence if and only if $G^{(-1)}\to G$ is an isomorphism, which is equivalent to $G$ being perfect.
\end{proof}

\begin{remark}\label{ExtLim}
As for any direct limit of abelian categories, for objects $M[i],N[i]\in \Rep(G_{\perf})$, using notation from \ref{notation}, we have
$$\varinjlim_{j}\Ext^l_{G}(M^{(j)}, N^{(j)})\cong \Ext^l_{G_{\perf}}(M[i],N[i]).$$
\end{remark}

\subsection{Additive, multiplicative and unipotent groups}
For convenience we let $k$ be algebraically closed in this section.

\subsubsection{}To lighten expressions, we introduce the following notation
$$\bG_a:=(\mG_a)_{\perf}\quad\mbox{and}\quad \bG_m:=(\mG_m)_{\perf}$$
for the perfection of the additive and multiplicative group of $k$.

\begin{prop}\label{PropDim1}
Assume that $k$ is algebraically closed.
\begin{enumerate}
\item Let $\bG$ be a connected affine group scheme of perfect finite type and of dimension $1$, then either $\bG\cong \bG_a$ or $\bG\cong \bG_m$.
\item Let $G$ be a reduced affine group scheme of finite type with $G_{\perf}\cong \bG_a$ (resp. $G_{\perf}\cong \bG_m$), then $G\cong \mG_a$ (resp. $G\cong\mG_m$). 
\end{enumerate}
\end{prop}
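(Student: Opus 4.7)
For part (1), the strategy is to descend the problem to the non-perfect setting and invoke the classical classification in dimension one. By Theorem~\ref{PerfAlgGr}, the hypothesis of perfect finite type gives a reduced affine group scheme $G$ of finite type over $k$ with $\bG \cong G_{\perf}$. Lemma~\ref{LemBasic}(1) then ensures that $G$ is connected of dimension $1$. Since $k$ is algebraically closed, the reduced finite type group scheme $G$ is smooth, so the classical fact that the only connected smooth affine algebraic groups of dimension $1$ over an algebraically closed field are $\mG_a$ and $\mG_m$ forces $G\cong\mG_a$ or $G\cong\mG_m$. Perfecting gives the conclusion.

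For part (2), the same two inputs (Lemma~\ref{LemBasic}(1) giving connectedness and $\dim G = 1$, plus the classical classification) already reduce us to $G\cong \mG_a$ or $G\cong \mG_m$; only the identification of which case occurs is at stake. Here I would use Example~\ref{Ex1Perf}(3): isomorphic perfections require isomorphic abstract groups of $k$-points. Since $k$ is algebraically closed of characteristic $p$, the additive group $\mG_a(k)=(k,+)$ is $p$-torsion, whereas $\mG_m(k)=k^\times$ contains an element of order $\ell$ for each prime $\ell\neq p$. Hence $\bG_a\not\cong \bG_m$, and the assumption $G_{\perf}\cong \bG_a$ (resp.\ $\bG_m$) pins down $G\cong\mG_a$ (resp.\ $G\cong\mG_m$).

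There is no real obstacle here: the proposition is essentially a packaging of the classical rank-one classification together with the formal properties of perfection already established (Theorem~\ref{PerfAlgGr}, Lemma~\ref{LemBasic}, Example~\ref{Ex1Perf}). The only point requiring a moment's care is ruling out a stray isomorphism $\bG_a\cong\bG_m$ in part (2), which the points argument above handles cleanly.
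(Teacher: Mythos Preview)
Your proposal is correct and follows essentially the same route as the paper: reduce to a connected reduced finite type group of dimension~$1$ via Theorem~\ref{PerfAlgGr}(d) and Lemma~\ref{LemBasic}(1), then invoke the classical classification (the paper cites \cite[Theorem~3.4.9]{Springer}). The only difference is that you explicitly justify $\bG_a\not\cong\bG_m$ via the $k$-points argument of Example~\ref{Ex1Perf}(3), whereas the paper leaves this implicit; your extra care here is harmless and arguably an improvement.
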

\begin{proof}
By \cite[Theorem~3.4.9]{Springer}, for any connected reduced affine group scheme $G$ of finite type and of dimension $1$, we must have $G\cong \mG_a$ or $G\cong \mG_m$.
This implies part (2), by Lemma~\ref{LemBasic}(1). Part (1) follows similarly, using characterisation \ref{PerfAlgGr}(d). \end{proof}

By a torus we mean an affine group scheme isomorphic to $\mG_m^{\times n}$ for some $n\in\mN$. Similarly, {\bf perfect tori} are the affine group schemes isomorphic to $\bG_m^{\times n}$ for some $n\in\mN$.

\begin{lemma}\label{LemTor}
For a connected reduced affine group scheme $G$ of finite type, the following are equivalent:
\begin{enumerate}
\item[(a)] $G$ is a torus;
\item[(b)] $G_{\perf}$ is a perfect torus.
\end{enumerate}
\end{lemma}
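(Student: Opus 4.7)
The direction (a)~$\Rightarrow$~(b) is immediate: since the perfection functor is the right adjoint of the inclusion of perfect $\mF_p$-schemes, it preserves finite products, so $(\mG_m^{\times n})_{\perf}\cong \bG_m^{\times n}$.

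For (b)~$\Rightarrow$~(a), suppose $G_{\perf}\cong \bG_m^{\times n}$. I would first apply Proposition~\ref{PropIso} with $H=\mG_m^{\times n}$; since $H$ is defined over $\mF_p$, every Frobenius twist $H^{(j)}$ is canonically isomorphic to $H$, so the proposition produces an infinitesimal isogeny $\phi\colon G\to \mG_m^{\times n}$. The next step is to reduce to the commutative case. Writing $N:=\ker\phi$, infinitesimality gives $N(k)=\{1\}$, so the induced map $G(k)\hookrightarrow \mG_m^{\times n}(k)$ is injective; since the target is abelian, so is $G(k)$. Because $G$ is reduced and of finite type over the algebraically closed field $k$, the $k$-points are Zariski dense in $G$, and the commutator morphism $G\times G\to G$ agrees with the constant morphism $1$ on the dense subset $G(k)\times G(k)$. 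The equaliser of these two morphisms into the separated scheme $G$ is a closed subscheme of the reduced scheme $G\times G$ whose underlying topological space is all of $G\times G$; reducedness of $G\times G$ forces this equaliser to equal $G\times G$ as a scheme, so $G$ is commutative.

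Finally I would invoke the structure theorem for connected commutative affine algebraic groups over an algebraically closed field of characteristic $p$, yielding a decomposition $G\cong T'\times U$ with $T'$ a torus and $U$ a connected unipotent group. The restriction $\phi|_U\colon U\to \mG_m^{\times n}$ vanishes since there are no non-trivial homomorphisms from a unipotent group to a torus, so $U\subset N$; but $U$ is reduced and $N$ is infinitesimal, forcing $U=1$ and $G\cong T'$. The step that needs most care is the commutativity reduction, combining infinitesimality of $N$ with density of $k$-points and the equaliser argument on the reduced scheme $G\times G$; the remaining steps are essentially formal consequences of Proposition~\ref{PropIso} and standard structure theory.
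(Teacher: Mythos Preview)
Your proof is correct but follows a different route from the paper's. The paper argues Tannakianly: since $G$ is reduced, Lemma~\ref{LemTann} makes the functor $\Rep G\to\Rep G_{\perf}$ fully faithful and exact, so $\Rep G$ inherits from $\Rep(\bG_m^{\times n})$ the property of being semisimple with every simple one-dimensional; a connected group with this property is diagonalizable, hence a torus. Your argument instead goes through Proposition~\ref{PropIso} to produce an infinitesimal isogeny $G\to\mG_m^{\times n}$, deduces commutativity via density of $k$-points, and then invokes the splitting $G\cong T'\times U$ for commutative linear algebraic groups. Both are valid; the paper's argument is shorter and avoids the structure theorem for commutative groups, while yours is more hands-on and shows how Proposition~\ref{PropIso} can be leveraged to descend structural properties from the perfection. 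Your care with the equaliser argument (using that $G\times G$ is reduced over the perfect field $k$) is appropriate and the place where a sloppier write-up could go wrong.
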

\begin{proof}
Clearly (a) implies (b). On the other hand, (b) implies that $G$ is connected and, since 
$$\Rep G\to \Rep G_{\perf}$$
is exact and fully faithful, see Lemma~\ref{LemTann}(1), it follows that $\Rep G$ is semisimple and pointed (every simple representation has dimension one). That $G$ is a torus then follows from \cite[3.2.3 and 3.2.7(ii)]{Springer}.
\end{proof}

Recall that an affine group scheme $ G$ is {\bf unipotent} if and only if every representation has an invariant vector (equivalently every simple object in $\Rep G$ is trivial).


\begin{lemma}\label{CorUni}
 For an affine group scheme $G$ over $k$, $G_{\perf}$ is unipotent if and only if $G_{\red}$ is unipotent.
\end{lemma}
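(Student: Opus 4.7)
The plan is to reduce to the case when $G$ is reduced and then exploit the Tannakian description $\Rep(G_{\perf}) \cong \varinjlim \Rep G$ from~\eqref{TannLim1} to translate the condition ``every representation has an invariant vector'' into a condition on $G$-invariants of Frobenius twists. Since $G_{\perf} \cong (G_{\red})_{\perf}$ by Lemma~\ref{LemBasic}(4), we may replace $G$ by $G_{\red}$ and it suffices to show that for a reduced affine group scheme $G$, the group $G_{\perf}$ is unipotent if and only if $G$ is.

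Using~\eqref{TannLim1} together with the notation of~\ref{notation}, every nonzero object of $\Rep(G_{\perf})$ has the form $V[i]$ for some nonzero $V \in \Rep G$, and, by Remark~\ref{ExtLim} applied to $l=0$ and $M = k$,
$$V[i]^{G_{\perf}} \;=\; \Hom_{G_{\perf}}(k, V[i]) \;\cong\; \varinjlim_{n} \Hom_G(k, V^{(n)}) \;=\; \varinjlim_{n} (V^{(n)})^{G}.$$
Consequently, $G_{\perf}$ is unipotent if and only if for every nonzero $V \in \Rep G$ there exists some $n$ with $(V^{(n)})^G \neq 0$.

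The core step is then to observe that, for $G$ reduced, $(V^{(n)})^G \neq 0$ if and only if $V^G \neq 0$. Granting this, the stated equivalence is immediate: $G$ unipotent means every nonzero $V$ has $V^G \neq 0$, which then forces $(V^{(n)})^G \neq 0$ for all $n$ and hence $V[i]^{G_{\perf}} \neq 0$; conversely, if $G_{\perf}$ is unipotent, then for every nonzero $V$ some $(V^{(n)})^G$ is nonzero, which forces $V^G \neq 0$.

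The main obstacle is justifying the claim $\dim_k (V^{(n)})^G = \dim_k V^G$ for $G$ reduced. I would do this by noting that $(V^{(n)})^G$ can be canonically identified with $(V^G)^{(n)}$, via the following observation: for $G$ reduced, Fr$:G \to G^{(n)}$ is faithfully flat (this is closely tied to Lemma~\ref{LemTann}(1) and the fact that $k[G] \to k[G]$, $a \mapsto a^{p^n}$, is injective), so pull-back along Fr identifies $G^{(n)}$-invariants with $G$-invariants. Since $k$ is perfect, extension of scalars along the Frobenius of $k$ preserves dimension, so $\dim(V^G)^{(n)} = \dim V^G$, which yields the claim and completes the proof.
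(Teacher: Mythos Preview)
Your proof is correct and follows essentially the same route as the paper. The paper's argument is slightly more streamlined: rather than computing $\varinjlim (V^{(n)})^G$ explicitly, it simply observes that $\Rep G_{\red}\to\Rep G_{\perf}$ is a fully faithful exact functor (which is precisely your claim $(V^{(n)})^G\cong(V^G)^{(n)}$, packaged via Lemma~\ref{LemTann}(1)), and such a functor immediately reflects the property that every simple object is trivial.
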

\begin{proof}
By equivalence~\eqref{TannLim2}, if $G$ (or $G_{\red}$) is unipotent, then so is $G_{\perf}$. On the other hand, if $G_{\perf}$ is unipotent, then the fully faithful exact functor from $\Rep G_{\red}$ to $\Rep G_{\perf}$ shows that also $G_{\red}$ is unipotent.
\end{proof}

\subsubsection{}\label{ExZ1p}
We have a ring isomorphism
\begin{equation}\label{EndGm}
\mZ[1/p]\;\xrightarrow{\sim}\;\End(\bG_m),\qquad a\mapsto \{\blam\mapsto \blam^a\},\end{equation}
where $\blam$ stands for an element of $\bG_m(A)=\varprojlim A^\times$ for a commutative $k$-algebra $A$. The restriction to $p^{\mZ}\hookrightarrow \Aut(\bG_m)$ is the homomorphism from~\ref{pZ}.

For $\mG_a$, the latter homomorphism extends to an isomorphism
\begin{equation}\label{AutGa}
k^\times \rtimes p^{\mZ} \;\xrightarrow{\sim}\; \Aut(\bG_a),\qquad (\kappa, n)\mapsto \{ \theta^n_\kappa: \blam\mapsto \kappa\blam^n\}.
\end{equation}


\subsubsection{}\label{PerfChar}
For a perfect affine group scheme $\bG$, Lemma~\ref{LemGHperf} shows that characters of $\bG$ correspond to homomorphisms $\bG\to\bG_m$. As the latter formulation carries more structure, we define
$$\bX(\bG)\,:=\, \Hom(\bG,\bG_m)\,\cong\, \Hom(\bG,\mG_m)=X(\bG).$$
This is a $\mZ[1/p]$-module via~\eqref{EndGm}. Consequently, we will define cocharacters of $\bG$ to be the $\mZ[1/p]$-module
$$\bY(\bG):=\Hom(\bG_m,\bG).$$
We have the obvious bilinear pairing
\begin{equation}\label{pairing}\bX(\bG)\times\bY(\bG)\to\mZ[1/p].\end{equation}
It is non-degenerate if $\bG$ is a perfect torus.

\subsection{Induction}

\subsubsection{} For a homomorphism $f:H\to G$ of affine group schemes, we will sometimes abbreviate $f_\ast:=\Ind^{G}_H$ and $f^\ast:=\Res^G_H$. This gives an adjoint pair $f^\ast\dashv f_\ast$ of functors between $\Rep^\infty G$ and $\Rep^\infty H$.

\subsubsection{}\label{adjunctions} For a commutative square of group homomorphisms 
$$\xymatrix{
H\ar[rr]^f\ar[d]^a&&G\ar[d]^b\\
A\ar[rr]^g&&B,}$$
the adjunction morphisms yield a natural transformation
$$\xi:b^\ast g_\ast\;\Rightarrow\; f_\ast a^\ast.$$
In particular, for $M\in \Rep^\infty A$, the morphism $\xi_M$ is zero if and only if the composite
$$g^\ast g_\ast M\;\to\; M\;\to\; a_\ast a^\ast M$$
is zero.

\subsubsection{} Now we consider two affine group schemes of the form $A=\varprojlim A_i$ and $B=\varprojlim B_i$ for inverse systems of affine group schemes $(A_i\,|\, i\in\mN)$ and $(B_i\,|\, i\in\mN)$. We label the homomorphisms $p_i: A\to A_i$, $q_i: B\to B_i$ and $a_{[i,j]}:A_{i}\to A_j$ for $i>j$. Assume also given homomorphisms $A_i\to B_i$, leading to $A\to B$.

\begin{prop}\label{IndProp}
\begin{enumerate}
\item For $M_i\in \Rep^\infty A_i$ and morphisms $a^\ast_{[i+1,i]}M_i\to M_{i+1}$, the evaluations at $M_i$ of the natural transformations in \ref{adjunctions} lead to an isomorphism
$$\varinjlim q_i^\ast \Ind^{B_i}_{A_i}M_i\;\xrightarrow{\sim}\; \Ind^B_A\varinjlim p_i^\ast M_i. $$
\item For $M\in \Rep^\infty A_j$ and $n\in\mN$, we have a canonical isomorphism
$$\varinjlim q_i^\ast ( R^n\Ind^{B_i}_{A_i}(a_{[i,j]}^\ast M))\;\xrightarrow{\sim}\; R^n\Ind^B_A (p_j^\ast M).$$
\end{enumerate}
\end{prop}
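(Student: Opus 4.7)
The plan is to reduce both parts to the fact that filtered colimits commute with finite limits and with cohomology of complexes in the category of $k$-vector spaces, once the (derived) induction functors are described via explicit Hopf-algebraic formulas.

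For part (1), I would use that for any homomorphism $\alpha\colon H\to G$ of affine group schemes and $N\in\Rep^\infty H$ one has
\[
\Ind^G_H N \;=\; (k[G]\otimes N)^H,
\]
realised as the equaliser of the two natural maps $k[G]\otimes N\rightrightarrows k[G]\otimes N\otimes k[H]$ coming from the coaction and the trivial coaction. Since $A=\varprojlim A_i$ and $B=\varprojlim B_i$ give $k[A]=\varinjlim k[A_i]$ and $k[B]=\varinjlim k[B_i]$, and $M:=\varinjlim p_i^\ast M_i$ is itself a compatible filtered colimit, both $k[B]\otimes M$ and the pair of maps into $k[B]\otimes M\otimes k[A]$ are filtered colimits of the analogous data at stage $i$. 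Exactness of filtered colimits of vector spaces means they commute with equalisers, so
\[
\Ind^B_A M \;\cong\;\varinjlim_i (k[B_i]\otimes M_i)^{A_i}\;=\;\varinjlim_i q_i^\ast\Ind^{B_i}_{A_i}M_i.
\]
Naturality of the equaliser presentation in $\alpha$ identifies this isomorphism with the colimit of the comparison morphisms $\xi_{M_i}$ of \ref{adjunctions}.

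For part (2), I would compute $R^n\Ind^B_A$ via the Hochschild-type resolution. Modules of the form $N\otimes k[A]$ (with diagonal action) are acyclic for $\Ind^B_A$ because, by the intertwining isomorphism, $\Ind^B_A(N\otimes k[A])\cong N\otimes k[B]$. Applying $\Ind^B_A$ to the standard functorial resolution
\[
M\to M\otimes k[A]\to M\otimes k[A]^{\otimes 2}\to\cdots
\]
therefore yields a complex with $n$-th term $M\otimes k[B]\otimes k[A]^{\otimes n}$ whose cohomology computes $R^n\Ind^B_A M$. For $M=p_j^\ast M'$, each term of this complex is the filtered colimit over $i\ge j$ of the corresponding term of the complex computing $q_i^\ast R^n\Ind^{B_i}_{A_i}(a_{[i,j]}^\ast M')$. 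Since cohomology of a complex of vector spaces commutes with filtered colimits, part (2) follows, with naturality again identifying the resulting isomorphism with the adjunction comparison.

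The main technical care lies in verifying that the isomorphisms coming from the colimit description really do agree with those coming from the adjunction natural transformations $\xi$ of \ref{adjunctions}; this is forced by the universal property of the unit/counit but requires careful unpacking. A secondary point is to ensure that the Hochschild resolution remains valid for an arbitrary homomorphism $A\to B$ rather than just an embedding, which follows from the intertwining identity cited above.
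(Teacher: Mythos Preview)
Your argument for part (1) is essentially the same as the paper's: both identify $\Ind^B_A M$ as an equaliser (invariants) in $k[B]\otimes M$ and use that filtered colimits of vector spaces are exact, hence commute with equalisers.

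For part (2) your route genuinely differs from the paper's. The paper proceeds by choosing injective hulls $M_i\hookrightarrow I_i$ at each finite stage, forming the colimit $I=\varinjlim p_i^\ast I_i$, and then proving the non-obvious fact that $I$ is injective in $\Rep^\infty A$; the result then follows by dimension-shifting and induction on $n$. You instead use the functorial standard (Hochschild) resolution $M\to M\otimes k[A]\to M\otimes k[A]^{\otimes 2}\to\cdots$, which is automatically compatible with the inverse system because $k[A]=\varinjlim k[A_i]$ and tensor products commute with colimits. Part~(1) then identifies each term of the complex after applying $\Ind^B_A$ as the colimit of the corresponding finite-level terms, and commuting cohomology with the filtered colimit finishes. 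Your approach is cleaner in that it sidesteps the injectivity-of-the-colimit lemma entirely; the paper's approach has the virtue of not depending on the explicit bar resolution and would adapt to settings where such a functorial acyclic resolution is unavailable.

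One minor point: your displayed formula $\Ind^B_A(N\otimes k[A])\cong N\otimes k[B]$ requires a word of justification (use the $A$-module isomorphism $N\otimes k[A]\cong N_{\mathrm{triv}}\otimes k[A]$ and then identify $\Ind^B_A(k[A])$), but in fact the argument does not need this explicit shape at all---it only needs that $M\otimes k[A]^{\otimes(n+1)}$ is $\Ind^B_A$-acyclic (clear, since it is injective in $\Rep^\infty A$) and that it is the colimit of the analogous modules at finite level, so that part~(1) applies term by term.
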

\begin{proof}
The right-hand side in part (1) is given by the $A$-invariants in the vector space
$$\varinjlim (M_i\otimes k[B_i]).$$
Since direct limits commute with co-equalisers, this is isomorphic to the direct limit of $A_i$-invariants in the above spaces. This proves part (1).

Note that the case $n=0$ in part (2) is the special case of part (1) where we set $M_i:=a^\ast_{[i,j]}M$. We can prove the analogue of part (1) for derived functors, which similarly specialises to part (2), as follows. For a chain $\{M_i\}$ as in part (1), we consider injective hulls in $\Rep^\infty A_i$, yielding short exact sequences
$$0\to M_i\to I_i\to Q_i\to 0.$$
The defining property of injective modules gives a chain map from the action of $a_{[i+1,i]}^\ast$ on the above sequence and the corresponding sequence for $i+1$.
In particular, this makes $\{I_i\}$ and $\{Q_i\}$ into chains of representations as in part (1), and we have a short exact sequence
$$0\to \varinjlim p_i^\ast M_i\to\varinjlim p_i^\ast  I_i\to\varinjlim p_i^\ast  Q_i\to 0$$
in $\Rep^\infty A$. The claim now follows by induction on $n$, using long exact sequences in homology if we observe that $I:=\varinjlim p_i^\ast  I_i$ is injective in $\Rep^\infty A$. Exactness of
$$\Hom_A(-,I):\;\Rep A\to\Vecc^\infty$$ 
follows from the observation $\Rep A\cong \varinjlim \Rep A_i$. The latter exactness is sufficient to conclude that $I$ is injective. Indeed, we can consider an injective hull $I\subset I'$ and an intermediate module $I\subset I''\subset I'$ for which $I''/I$ is finite dimensional. Now we must have $I''\cong I\oplus I''/I$ (apply $\Hom(-,I)$ to a finite submodule of $I''$ which still surjects onto $I''/I$) which violates $\soc I =\soc I'$ unless $I''=I$.
\end{proof}

\begin{corollary}\label{PerfectInduction}
Let $G$ be an affine group scheme with subgroup $H$, set $\bG=H_{\perf}$, $\bH=H_{\perf}$ and take $n\in\mN$. For $M\in \Rep H$ such that $R^n\Ind^{G}_{H}M^{(i)}$ is finite dimensional for each $i\in\mN$. For each $j\in\mN$ we have an isomorphism
$$\varinjlim_{i\ge j} \left(R^n  \Ind^{G}_{H}(M^{(i-j)})[i]\right)\;\xrightarrow{\sim}\; R^n  \Ind^{\bG}_{\bH}(M[j]),$$
with notation as in \ref{notation}.
\end{corollary}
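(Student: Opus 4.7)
The plan is to apply Proposition~\ref{IndProp}(2) essentially verbatim. I use the realisation from \ref{relative1} in which $\bG = \varprojlim G$ and $\bH = \varprojlim H$, all transition morphisms being the absolute Frobenius endomorphism. In the notation of Proposition~\ref{IndProp}, I set $A_i := H$ and $B_i := G$ for all $i \in \mN$, so that $A = \bH$ and $B = \bG$, and each map $A_i \to B_i$ is the original inclusion $H \hookrightarrow G$.

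The crucial identification is that, for $M \in \Rep H$ and $i \ge j$, the pullback $a_{[i,j]}^\ast M$ along the $(i-j)$-fold iterate of the Frobenius endomorphism of $H$ is canonically isomorphic to the Frobenius twist $M^{(i-j)}$. This follows from the commutative diagram \eqref{2twists}, which (together with the perfectness of $k$) is precisely the mechanism reconciling the two realisations \eqref{TannLim1} and \eqref{TannLim2} of $\Rep(\bG)$. Substituting into Proposition~\ref{IndProp}(2) then yields
$$\varinjlim_{i \ge j} q_i^\ast \bigl(R^n\Ind^{G}_{H}(M^{(i-j)})\bigr) \;\xrightarrow{\sim}\; R^n\Ind^{\bG}_{\bH}(p_j^\ast M),$$
where $p_j : \bH \to H$ and $q_i : \bG \to G$ are the canonical projections onto the $j$-th and $i$-th entries of the respective inverse systems.

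The remaining task is to translate both sides into the bracket notation of \ref{notation}. Under the equivalence \eqref{TannLim1}, the functor $q_i^\ast : \Rep G \to \Rep \bG$ is precisely $N \mapsto N[i]$, and likewise $p_j^\ast M \cong M[j]$. The finite-dimensionality hypothesis on each $R^n\Ind^{G}_{H}(M^{(i)})$ is exactly what enables us to apply the bracket notation termwise: it guarantees that each induction module lies in $\Rep G$, rather than merely in $\Rep^\infty G$ where the symbol $[\cdot]$ is not defined.

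The only point I expect to require genuine care is the identification $a_{[i,j]}^\ast M \cong M^{(i-j)}$, i.e.\ verifying that pulling back a representation along the absolute Frobenius endomorphism of $H$ corresponds, through the identifications in \eqref{2twists} and the perfectness of $k$, to the Frobenius twist functor $V \mapsto V^{(1)}$. Granted this book-keeping, the entire corollary reduces to a mechanical rewriting of Proposition~\ref{IndProp}(2).
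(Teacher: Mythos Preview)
Your proposal is correct and follows essentially the same route as the paper: apply Proposition~\ref{IndProp}(2) to the inverse system defining the perfection, then invoke the finite-dimensionality hypothesis to pass to the bracket notation of~\ref{notation}. The only presentational difference is that the paper works with the realisation~\ref{relative2} (setting $A_i=H^{(-i)}$, $B_i=G^{(-i)}$, and starting from $M^{(-j)}\in\Rep H^{(-j)}$), where all transition maps are $k$-scheme morphisms and the pullback functors are $k$-linear, whereas you use the realisation~\ref{relative1} with the absolute Frobenius. Your approach is valid but requires exactly the semilinear book-keeping you flag at the end: the pullback along the absolute Frobenius is not literally the functor $V\mapsto V^{(1)}$ on the nose (the underlying vector spaces differ), and the transition functors in your chain are not $k$-linear, as the paper itself warns just after~\eqref{TannLim1}. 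The paper's choice sidesteps this by keeping everything $k$-linear until the very last step, where the passage from~\eqref{TannLim2} to~\eqref{TannLim1} absorbs the twist. Either way the content is the same, and you have correctly identified both the mechanism and where the care is needed.
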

\begin{proof}
We start by applying Proposition~\ref{IndProp}(2), using the interpretation $G_{\perf}=\varprojlim G^{(-i)}$, applied to $M^{(-j)}\in \Rep H^{(-j)}$. By assumption, all the representations appearing in the direct limit in \ref{IndProp}(2) are finite dimensional. After passing from \eqref{TannLim2} to \eqref{TannLim1} this allows us to use the notation from \ref{notation} to rewrite the isomorphism in the desired form.
\end{proof}

\begin{remark}\label{RemNot}
\begin{enumerate}
\item For group schemes of finite type, we can prove Corollary~\ref{PerfectInduction} alternatively using Theorem~\ref{ThmSub}(3) and (a generalisation from line bundles to general quasi-coherent sheaves with identical proof  of) Lemma~\ref{LemLine}.
\item Assume that $M$ is one-dimensional, and $ \Ind^G_{H}(M^{(j)})\not=0$ for all $j$. It follows from \ref{adjunctions} that the morphisms in the directed system for the left-hand side in Corollary~\ref{PerfectInduction} for $n=0$ are all non-zero.
\end{enumerate}
\end{remark}


\section{Perfectly reductive groups}\label{SecRed}

In this section we assume that $k$ is algebraically closed of characteristic $p>0$.

\subsection{Definition}

Recall that a reductive group over $k$ is a connected reduced (smooth) affine group scheme of finite type which has no non-trivial normal reduced unipotent subgroup.

\begin{theorem}\label{DefRG}
\begin{enumerate}
\item For an affine group scheme $\bG$, the following are equivalent:
\begin{enumerate}
\item $\bG$ is a connected affine group scheme of perfect finite type and has no non-trivial perfect normal unipotent subgroups;
\item $\bG\cong G_{\perf}$ for a reductive group $G$;
\item $\bG$ is an affine group scheme of perfect finite type and, { whenever $\bG\cong H_{\perf}$ for a reduced affine group scheme of finite type $H$, then $H$ must be reductive.}
\end{enumerate}
If these conditions are satisfied, we call $\bG$ {\bf perfectly reductive}.
\item For every connected affine group scheme of perfect finite type $\bH$, there exists a short exact sequence of perfect affine group schemes
$$1\to \bU\to \bH\to \bQ\to 1,$$
where $\bU$ is unipotent and $\bQ$ is perfectly reductive.
\end{enumerate}
\end{theorem}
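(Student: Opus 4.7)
The plan is to prove the three equivalences in part~(1) cyclically, via (b)$\Rightarrow$(a)$\Rightarrow$(c)$\Rightarrow$(b), using as key ingredients Theorem~\ref{PerfAlgGr} (which provides a reduced finite-type model for any perfectly finite-type group), Theorem~\ref{ThmSub}(2) (the correspondence between perfect normal subgroups of $G_{\perf}$ and reduced normal subgroups of $G_{\red}$), and Lemma~\ref{CorUni} (which says unipotence is preserved and reflected by perfection). Part~(2) will then follow by perfecting the unipotent radical short exact sequence of a reduced model of $\bH$.

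For (b)$\Rightarrow$(a), given $\bG\cong G_{\perf}$ with $G$ reductive, I would obtain connectedness from Lemma~\ref{LemBasic}(1) and perfect finite type from Theorem~\ref{PerfAlgGr}. Any perfect normal unipotent subgroup $\bU$ of $\bG$ is, by Theorem~\ref{ThmSub}(2), of the form $L_{\perf}$ for a reduced normal subgroup $L<G_{\red}=G$; Lemma~\ref{CorUni} ensures that $L$ itself is unipotent; and the definition of reductive then forces $L=1$, hence $\bU=1$.

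For (a)$\Rightarrow$(c), take any reduced finite-type $H$ with $H_{\perf}\cong\bG$; this $H$ is connected by Lemma~\ref{LemBasic}(1). A reduced normal unipotent subgroup $U$ of $H$ perfects to a perfect normal unipotent subgroup of $\bG$ (unipotent by Lemma~\ref{CorUni}), which is trivial by~(a). Since Frobenius is injective on reduced $k$-algebras, the resulting inclusion $k[U] \hookrightarrow k[U]_{\perf}=k$ forces $U=1$, so $H$ is reductive. The implication (c)$\Rightarrow$(b) is immediate from the existence statement in Theorem~\ref{PerfAlgGr}(d).

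For part~(2), I would apply Theorem~\ref{PerfAlgGr}(d) to write $\bH=H_{\perf}$ for a reduced connected finite-type $H$ (connectedness by Lemma~\ref{LemBasic}(1)), then perfect the unipotent radical sequence
$$1 \to R_u(H) \to H \to H/R_u(H) \to 1,$$
in which $H/R_u(H)$ is reductive by construction. Lemma~\ref{CorSES} yields a short exact sequence of perfect group schemes, Lemma~\ref{CorUni} makes $(R_u(H))_{\perf}$ unipotent, and (b) certifies that $(H/R_u(H))_{\perf}$ is perfectly reductive. The only subtle point in the whole argument is the reduced-to-trivial step, which rests on Frobenius being injective on reduced $k$-algebras; everything else is careful bookkeeping through the perfect/reduced dictionary.
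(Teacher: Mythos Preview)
Your proof is correct and follows essentially the same approach as the paper. The only cosmetic difference is organizational: the paper proves (b)$\Rightarrow$(a) and (c)$\Rightarrow$(b), then establishes part~(2), and finally extracts (a)$\Rightarrow$(c) from the construction in~(2) applied to the unipotent radical $R_uH$ specifically, whereas you prove (a)$\Rightarrow$(c) directly by considering an arbitrary reduced normal unipotent subgroup and then do part~(2) afterwards; the ingredients (Theorem~\ref{PerfAlgGr}, Theorem~\ref{ThmSub}(2), Lemma~\ref{CorUni}, Lemma~\ref{CorSES}, and injectivity of Frobenius on reduced algebras) are identical.
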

\begin{proof}
First we show that 1(b) implies 1(a). Set $\bG=G_{\perf}$ for a reductive group $G$. Let $\bU\lhd \bG$ be a perfect normal unipotent subgroup. Then by Theorem~\ref{ThmSub}(2), there exists a reduced normal subgroup $U\lhd G$ with $U_{\perf}=\bU$. By Lemma~\ref{CorUni}, $U$ is unipotent, so $U$ is trivial. Consequently $\bU$ is trivial.

That 1(c) implies 1(b) follows from Theorem~\ref{PerfAlgGr}. 

For $\bH$ as in (2), we know that $\bH=H_{\perf}$ for a connected reduced affine group scheme of finite type $H$ by Theorem~\ref{PerfAlgGr}. By taking the perfection of the short exact sequence corresponding to the unipotent radical $R_uH\lhd H$, see \cite[\S 6.4.6]{Milne}, we get a short exact sequence as desired in (2) (provided we define, for now, perfectly reductive groups as the perfections of reductive groups), with $\bU:=(R_uH)_{\perf}$, by Lemma~\ref{CorSES}. 

Since $R_uH$ is reduced, $\bU$ is trivial if and only if $R_uH$ is trivial, which shows that 1(a) implies 1(c).
\end{proof}

\begin{remark}
In addition to Theorem~\ref{DefRG}(2), we can also observe that every affine group scheme of perfect finite type $\bG$ admits a short exact sequence $\bH\to \bG\to Q$ where $Q$ is a finite abstract group and $\bH$ is a connected affine group scheme of perfect finite type. This follows from perfecting the classical theory, see \cite[\S 2.g]{Milne}.
\end{remark}

\subsubsection{}\label{pinnings}
A {\bf perfect Borel} subgroup $\bB$ of a perfectly reductive group is a maximal solvable perfect subgroup. For a reductive group $G$, every perfect Borel subgroup of $G_{\perf}$ is the perfection of a Borel subgroup of $G$ by Theorem~\ref{ThmSub}(2) and Corollary~\ref{solv}. In particular, every two perfect Borel subgroups are conjugate. 

Similarly, by Theorem~\ref{ThmSub}(2) and Lemma~\ref{LemTor}, every maximal perfect torus $\bT$ in $G_{\perf}$ is the perfection of a maximal torus $T<G$, and so maximal perfect tori are unique up to conjugation.

We henceforth use freely that every such choice $\bT<\bB < G_{\perf}$, for a reductive group $G$, can be obtained as the perfection of a corresponding choice $T<B<G$ of Borel subgroup and maximal torus in $G$.

\subsection{Classification}
We freely use the equivalence between $D$-root data and real-type $D$-root data for $D=\mZ$ and $D=\mZ[1/p]$ from Theorem~\ref{ThmData}.

\begin{theorem}\label{ThmClass}
There is a canonical bijection between the set of isomorphism classes of perfectly reductive groups over $k$ and the set of isomorphism classes of $\mZ[1/p]$-root data.
\end{theorem}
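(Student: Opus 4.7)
The plan is to reduce to the classical Chevalley--Demazure classification of reductive groups by $\mZ$-root data and combine it with the characterisation of isomorphisms of perfections from Proposition~\ref{PropIso}, Steinberg's isogeny theorem, and the passage from $\mZ$- to $\mZ[1/p]$-root data via Lemma~\ref{LemQ} and Example~\ref{RemIsog}.

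To construct the map, given a perfectly reductive $\bG$, Theorem~\ref{DefRG} furnishes a reductive group $G$ over $k$ with $G_{\perf}\cong\bG$. Fix a maximal torus $T<G$ and send $\bG$ to the extension of scalars along $\mZ\hookrightarrow \mZ[1/p]$ of the classical $\mZ$-root datum of $(G,T)$. One can check that this $\mZ[1/p]$-root datum is computed intrinsically from $(\bG,\bT)$, where $\bT$ is the perfection of $T$: indeed $\bX(\bT)\cong\mZ[1/p]\otimes X(T)$ by \eqref{EndGm}, and the perfect root subgroups $\bU_\alpha=(U_\alpha)_{\perf}\cong \bG_a$ (Proposition~\ref{PropDim1}) carry a $\bT$-action through the image of $\alpha\in X(T)$ in $\bX(\bT)$, accounting also for the $p^{\mZ}$-multiples that show up after extending scalars.

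For well-definedness, independence from the choice of $T$ is the classical conjugacy of maximal tori. If $G_{\perf}\cong H_{\perf}$ for another reductive $H$, Proposition~\ref{PropIso} produces an infinitesimal isogeny $G\to H^{(j)}$ for some $j\ge 0$. Steinberg's isogeny theorem translates this into an isogeny of $\mZ$-root data in the sense of \cite{St} between those of $G$ and $H^{(j)}$; the root datum of the Frobenius twist $H^{(j)}$ differs from that of $H$ by multiplication by $p^j$ on roots and coroots, which becomes invertible after $\mZ[1/p]$. Because the isogeny $G\to H^{(j)}$ has infinitesimal kernel, the Steinberg-factors on roots are all $p$-powers, so the induced morphism of $\mZ[1/p]$-root data is an isomorphism by Example~\ref{RemIsog}. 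For injectivity, suppose the two $\mZ[1/p]$-root data are isomorphic; Example~\ref{RemIsog} produces, after multiplying by a suitable $p^l$, an actual Steinberg-isogeny of $\mZ$-root data, which by Steinberg's theorem is realised by an isogeny of reductive groups (modulo a Frobenius twist on the target), necessarily infinitesimal since its $\mZ[1/p]$-root-datum image is an isomorphism; Proposition~\ref{PropIso} then gives $G_{\perf}\cong H_{\perf}$.

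For surjectivity, Lemma~\ref{LemQ} (applicable since $\mZ[1/p]\subset\mQ$) produces a $\mZ$-root datum whose extension of scalars is the given $\mZ[1/p]$-root datum; Chevalley--Demazure realises this as the root datum of a reductive group $G$ over $k$, and $G_{\perf}$ is the desired perfectly reductive group. The main obstacle is the careful bookkeeping that matches infinitesimal isogenies of reductive groups (composed with Frobenius twists) with $\mZ[1/p]$-isomorphisms of root data: Steinberg's isogeny theorem provides the correspondence between isogenies of reductive groups and isogenies of $\mZ$-root data with $p$-power factors, but one must track these factors to confirm that ``infinitesimal on the group side'' matches ``becomes an isomorphism on the $\mZ[1/p]$-root-datum side''; this is precisely what Example~\ref{RemIsog} absorbs.
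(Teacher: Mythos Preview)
Your overall strategy matches the paper's: reduce to the classical bijection between reductive groups and $\mZ$-root data, and show that the fibres of $-_{\perf}$ and of $\mZ[1/p]\otimes-$ coincide. Your surjectivity (via Lemma~\ref{LemQ}) and your injectivity direction are essentially the paper's Proposition~5.2.2.

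The substantive difference is in the other direction, ``$G_{\perf}\cong H_{\perf}\Rightarrow$ root data agree over $\mZ[1/p]$''. The paper proves this by an \emph{intrinsic} construction (Proposition~\ref{PropRed2}): it defines rt-pairs $(x,\alpha)$ with $x:\bG_a\hookrightarrow\bG$, shows via Lemma~\ref{Lemrt} that they recover exactly the set $p^{\mZ}R\subset\bX(\bT)$, and reconstructs the coroots from the rank-one subgroups generated by opposite root groups. This is what justifies the word ``canonical'' in the statement. You instead deduce well-definedness from Proposition~\ref{PropIso}: an isomorphism of perfections yields an infinitesimal isogeny $G\to H^{(j)}$, and since the kernel restricted to the maximal torus is a product of $\mu_{p^a}$'s, the induced Steinberg isogeny of $\mZ$-root data has $p$-group cokernel on character lattices and hence becomes an isomorphism over $\mZ[1/p]$. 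This is a legitimate and more economical alternative that reuses the same tools in both directions; what it gives up is the explicit intrinsic description of $\bR\subset\bX(\bT)$, which the paper's rt-pair machinery provides.

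Two small points. First, the root datum of $H^{(j)}$ is isomorphic to that of $H$ (not scaled by $p^j$): since $k$ is perfect, $H^{(j)}\cong H$ as $k$-group schemes. This does not affect your argument. Second, in your injectivity paragraph the claim ``necessarily infinitesimal since its $\mZ[1/p]$-root-datum image is an isomorphism'' is true but not immediate for non-central isogenies, where the kernel is not contained in the torus. The paper sidesteps this by constructing Steinberg isogenies in \emph{both} directions from $\varphi$ and $\varphi^{-1}$, observing that the composites correspond to $p^l\cdot\id$ on root data and hence (by uniqueness in Steinberg's theorem) to $\Fr^l$ up to inner automorphism, and then invoking Proposition~\ref{PropIso}(c)$\Rightarrow$(a). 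You should either do the same or supply the missing argument for why a Steinberg isogeny whose lattice map has $p$-power index must have infinitesimal kernel.
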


  \begin{remark} 
\begin{enumerate}
\item As in the classical case, this theorem can be
    extended to cover isogenies. We do this in \ref{subsec:isogenies}
    below.
    \item {  Theorem~\ref{ThmClass} implies in particular that for two reductive groups to have isomorphic perfections, they must have Weyl groups that are isomorphic as Coxeter groups. Slightly more restrictive, they must have the same Dynkin diagram, except that we can have perfected isomorphisms between types B and C when $p=2$.
    
    }
\end{enumerate}

  \end{remark}

\subsubsection{Idea of the proof}\label{Idea} 
We will prove that, when characterising a reductive group in terms of its root datum, two reductive groups become isomorphic after perfection if and only if their root data become isomorphic after extension of scalars to $\mZ[1/p]$. More explicitly:

Denote by $D\mbox{-}\mathcal{R}\mathcal{D}$ the set of isomorphism classes of $D$-root data. Furthermore, we let $\mathcal{R}e\mathcal{G}r$, resp. $\mathcal{P}e\mathcal{R}e\mathcal{G}r$, denote the set of isomorphism classes of reductive groups, resp. perfectly reductive groups, over $k$.
We can exploit the classical bijection between $\RD$ and $\RG$, see for instance \cite{De}, and include it in the following (commutative) diagram:
\begin{equation}\label{reddiag}\xymatrix{
\RG\ar@{->>}[rrr]^-{-_{\perf}}\ar[d]^{1:1}&&& \pRG\ar@{-->}@/^/[d]\\
\RD\ar@{->>}[rrr]^-{\mZ[1/p]\otimes-}\ar[u]&&& \pRD.\ar@{-->}@/^/[u].
}\end{equation}

The upper surjection is given by the definition in \ref{DefRG}(1)(b) of perfectly reductive groups. The lower surjection comes from Lemma~\ref{LemQ}.
To prove Theorem~\ref{ThmClass}, it suffices to show the dashed arrows in \eqref{reddiag} exist. This is established in the following two propositions.

\begin{prop}
 If the root data of two reductive groups $G_1,G_2$ extend to isomorphic root data over $\mZ[1/p]$, then $(G_1)_{\perf}$ and $(G_2)_{\perf}$ are isomorphic. In particular, the upwards dashed arrow in \eqref{reddiag} exists.
\end{prop}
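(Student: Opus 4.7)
The plan is to realise the given $\mZ[1/p]$-isomorphism as a symmetric pair of algebraic group homomorphisms between $G_1$ and (Frobenius twists of) $G_2$, and then conclude via Proposition~\ref{PropIso}(c)$\Rightarrow$(a).

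First I would invoke Example~\ref{RemIsog}. For the given isomorphism $\varphi$ of $\mZ[1/p]$-root data, there exist $i,j\in\mN$ (taken sufficiently large) such that $p^j\varphi$ restricts to a $\mZ$-isogeny $RD(G_1)\to RD(G_2)$ in the sense of \cite[\S 1]{St}, and symmetrically $p^i\varphi^{-1}$ restricts to a $\mZ$-isogeny $RD(G_2)\to RD(G_1)$. By construction, the two compositions of these $\mZ$-isogenies are simply multiplication by $p^{i+j}$ on the character (resp.\ cocharacter) lattices, which coincide with the effect of the iterated Frobenius on root data.

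Next, after fixing pinnings on $G_1$ and $G_2$, the classical Chevalley--Steinberg isogeny theorem produces unique algebraic group homomorphisms $\phi\colon G_1\to G_2^{(j)}$ and $\psi\colon G_2\to G_1^{(i)}$ realising these $\mZ$-isogenies. The Frobenius twists on the targets arise precisely to absorb the uniform $p$-power scaling: on the group side, a $p^n$-dilation of the characters is realised by composing with $\Fr^n$. I would then verify the compatibility conditions of Proposition~\ref{PropIso}(c): the two compositions $\psi^{(j)}\circ\phi\colon G_1\to G_1^{(i+j)}$ and $\phi^{(i)}\circ\psi\colon G_2\to G_2^{(i+j)}$ are isogenies of pinned reductive groups whose induced maps on root data agree with those of $\Fr^{i+j}$. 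By the uniqueness clause of the isogeny theorem this forces $\psi^{(j)}\circ\phi=\Fr^{i+j}$ and $\phi^{(i)}\circ\psi=\Fr^{i+j}$. Proposition~\ref{PropIso} then yields $(G_1)_{\perf}\cong (G_2)_{\perf}$, which establishes the upwards dashed arrow in \eqref{reddiag}.

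The main obstacle is the correct invocation of the classical isogeny theorem in the middle step, because our $\mZ$-isogenies carry non-trivial root-scaling factors $p^j$, $p^i$. This requires keeping the relative Frobenius twists of $G_1$ and $G_2$ in order and choosing pinnings so that the uniqueness statement of the isogeny theorem can be applied to force the two composition identities. An alternative, slightly shorter, route would be to realise only one of $\phi,\psi$ and then argue that its kernel is a finite central diagonalisable group scheme whose character group is the cokernel of $p^j\varphi$ (a finite $p$-group), hence infinitesimal, so that Lemma~\ref{LemIsog} applies directly; this sidesteps the uniqueness check but still relies on the same invocation of Chevalley--Steinberg.
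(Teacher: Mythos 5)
Your main argument follows the paper's proof almost line for line: scale the $\mZ[1/p]$-isomorphism by powers of $p$ to get $\mZ$-isogenies in Steinberg's sense (via Example~\ref{RemIsog}), realise them by the classical isogeny theorem, use its uniqueness clause to identify the two compositions with iterated Frobenius, and conclude by Proposition~\ref{PropIso}(c)$\Rightarrow$(a). Your bookkeeping with explicit Frobenius twists $G_2^{(j)}$, $G_1^{(i)}$ and pinnings is a touch cleaner than the paper, which realises both isogenies between the untwisted groups and uses the $\mF_p$-structure isomorphisms $\phi\colon G_i^{(1)}\to G_i$ from~\ref{pZ} to interpret the composites, with uniqueness holding only up to inner automorphisms rather than on the nose. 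One caveat on your ``alternative route'': the kernel of such an isogeny is generally \emph{not} central diagonalisable (the Frobenius kernel of $SL_2$ already fails this); it is nevertheless infinitesimal because the torus kernel has character group the $p$-power cokernel of $p^j\varphi$ and the root-group scalings are $p$-powers, so Lemma~\ref{LemIsog} still applies, but the justification needs to be phrased that way rather than via centrality.
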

\begin{proof} Let $(X_i,R_i,Y_i,R^\vee_i)$, with $i\in\{1,2\}$, denote the root datum of $G_i$.
Consider an isomorphism of root data given by $\psi:\mZ[1/p]\otimes X_1 \xrightarrow{\sim}\mZ[1/p]\otimes X_2$. Replacing $\psi$ by $p^l\psi$ if necessary (as is allowed by Lemma~\ref{LemBeta1}), we can assume that $\psi$ restricts to an embedding $X_1\hookrightarrow X_2$. Furthermore, we get a bijection $R_1\to R_2$, by associating to $\alpha\in R_1$ the unique element $\alpha'\in R_2$ for which $\psi(\alpha)=p^j\alpha'$ for some $j\in\mZ$. Again, after replacing $\psi$ by $p^l\psi$ if necessary, we can assume $j\in\mN$. Now it follows quickly that this $X_1\hookrightarrow X_2$ satisfies the requirements to apply \cite[Theorem~1.5]{St}, which yields an isogeny $G_1\to G_2$.

We can apply the same procedure to $\psi^{-1}$ to obtain an isogeny $G_2\to G_1$. As we might need to replace $\psi^{-1}$ again by a composition with multiplication by a power of $p$, our two isogenies will not necessarily be induced by mutually inverse maps $X_1\leftrightarrow X_2$, but by maps which compose to $p^l$ times the identity for some $l\in\mN$. Uniqueness of isogenies in \cite[Theorem~1.5]{St} then states that composition of the isogenies between $G_1$ and $G_2$ yield morphisms $\phi^l\circ \Fr^l$, for isomorphisms $\phi: G_i^{(1)}\to G_i$ as in \ref{pZ} (up to possible composition with inner automorphisms). That $(G_1)_{\perf}$ and $(G_2)_{\perf}$ are isomorphic now follows from Proposition~\ref{PropIso}.
\end{proof}

Establishing the existence of the downwards dashed arrow will
  take more work. { Note that an alternative proof of this fact will be given in Section~\ref{SecBG}. As that proof moves via topology, it seems preferable to have this direct algebraic proof too.}
The following lemma can be proved by looking at the Hopf algebra morphisms.
\begin{lemma}\label{LemFrEasy}
Consider a reduced affine group scheme $H$ with a homomorphism $\phi: H^{(-i)}\to\mG_m$, such that the diagram
$$\xymatrix{
\mG_m\times\mG_a\ar[rr]^{(\lambda,\mu)\mapsto \lambda\mu}&&\mG_a\\
H^{(-i)}\times \mG_a\ar[rr]^{\Fr^i\times\id}\ar[u]^{\phi\times\id}&&H\times \mG_a\ar@{-->}[u]
}$$
can be completed with dashed arrow to a commutative square. Then $\phi$ factors through $\Fr^i:H^{(-i)}\to H$.
\end{lemma}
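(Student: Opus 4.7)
First, translate everything into Hopf-algebra language. Writing $A:=k[H]$ and expanding $\mu^*(y)=\sum_{n\ge 0} f_n\otimes y^n$ in $A\otimes k[y]$ (only finitely many $f_n\in A$ nonzero), the commutative square in the hypothesis — whose top row sends $y\mapsto t\otimes y$ in $k[t^{\pm 1}]\otimes k[y]$, and whose left vertical is induced by $\phi^*$ with $f:=\phi^*(t)\in k[H^{(-i)}]^\times$ the group-like element encoding the character $\phi$ — translates into the identity
\[
\sum_n \Fr^{i,*}(f_n)\otimes y^n \;=\; f\otimes y \qquad\text{in } k[H^{(-i)}]\otimes k[y].
\]
Comparing coefficients of $y^n$, this forces $\Fr^{i,*}(f_1)=f$ and $\Fr^{i,*}(f_n)=0$ for all $n\ne 1$.

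Next, exploit the reducedness of $H$. Using the explicit description of the relative Frobenius in \eqref{eqRelFr} and perfectness of $k$, the map $\Fr^{i,*}\colon k[H]\to k[H^{(-i)}]$ is, after an identification that twists the $k$-structure, the iterated $p^i$-th power map on $A$. As $A$ is reduced this map is injective. Hence $f_n=0$ for $n\ne 1$, and we may set $g:=f_1\in A$ with $\Fr^{i,*}(g)=f$. Since $H\times H$ is likewise reduced, the same argument gives injectivity of $\Fr^{i,*}\otimes\Fr^{i,*}$ on $A\otimes A$; this will be used in the next step.

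Finally, promote $g$ to a character. Because $f$ is group-like in $k[H^{(-i)}]$ and $\Fr^{i,*}$ is a morphism of Hopf algebras,
\[
(\Fr^{i,*}\otimes\Fr^{i,*})(\Delta_H g-g\otimes g) \;=\; \Delta_{H^{(-i)}}(f)-f\otimes f \;=\; 0,
\]
so $\Delta_H g=g\otimes g$ by the injectivity just observed. Moreover $g\ne 0$, since $\Fr^{i,*}(g)=f$ is a unit. A nonzero group-like element of a commutative Hopf algebra is automatically a unit: applying the counit gives $\epsilon(g)=1$, after which the antipode axiom produces $g\cdot S(g)=1$. Thus $g\in A^\times$, and the homomorphism $\bar\phi\colon H\to\mG_m$ defined by $\bar\phi^*(t)=g$ satisfies $\Fr^{i,*}\circ\bar\phi^*=\phi^*$, i.e.\ the desired factorisation $\phi=\bar\phi\circ\Fr^i$.

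The only real technical point is verifying injectivity of $\Fr^{i,*}$ (and of its tensor square) for the possibly non-finite-type reduced group scheme $H$; this should cause no trouble given \eqref{eqRelFr} and perfectness of $k$, but is the place where care with the Frobenius-twist conventions is needed.
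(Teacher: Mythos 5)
Your proof is correct, and it is exactly the argument the paper has in mind: the authors omit the proof entirely, offering only the hint that the lemma ``can be proved by looking at the Hopf algebra morphisms.'' Your write-up is a faithful expansion of that hint: translate the square into an identity $\sum_n \Fr^{i,*}(f_n)\otimes y^n = f\otimes y$ in $k[H^{(-i)}]\otimes k[y]$, use injectivity of $\Fr^{i,*}$ on the reduced coordinate ring to kill $f_n$ for $n\neq 1$, and then use injectivity of $\Fr^{i,*}\otimes\Fr^{i,*}$ to transport group-likeness from $f$ back to $g=f_1$.

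One small point worth making explicit at the spot you flag as the technical crux: the injectivity of $\Fr^{i,*}\otimes\Fr^{i,*}$ on $k[H]\otimes_k k[H]$ requires $k[H]\otimes_k k[H]$ itself to be reduced, not merely $k[H]$. This is fine because $k$ is perfect (so reduced implies geometrically reduced, and tensor products of geometrically reduced $k$-algebras are reduced), and indeed $k[H]\otimes_k k[H]=k[H\times H]$ with $\Fr^{i,*}\otimes\Fr^{i,*}$ the corresponding Frobenius for $H\times H$, so the same ``$p^i$-th power, hence injective on a reduced ring over a perfect field'' argument applies verbatim. With that sentence added, the proof is complete.
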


\begin{definition}
Let $\bG$ be a perfectly reductive group with maximal perfect torus $\bT$.
An {\bf rt-pair} is a pair $(x,\alpha)$ of a subgroup inclusion $x:\bG_a\to \bG$ and $\alpha\in \bX$ ({\it i.e.} $\alpha:\bT\to\bG_m$), for which the following square is commutative
\begin{equation}\label{eqrt}\xymatrix{\bG_m\times\bG_a\ar[rrr]^{(\blam,\bmu)\mapsto \blam\bmu}&&&\bG_a\ar@{^{(}->}[d]^x\\
\bT\times\bG_a\ar[rrr]^{(t,\bmu)\mapsto tx(\bmu)t^{-1}}\ar[u]^{\alpha\times \id}&&&\bG.}\end{equation}
\end{definition}

If we apply this definition to ordinary reductive groups (we replace every perfect group in \eqref{eqrt} by its finite type analogue), we get precisely the pairs of inclusions of root subgroups and their corresponding root. Since root subgroups are unique, the inclusion of the root subgroup is unique up to scalar in $k^\times\cong \Aut(\mG_a)$.

\begin{lemma}\label{Lemrt}

Let $\bG$ be a perfectly reductive group with maximal perfect torus $\bT$.
\begin{enumerate}
\item The group $k^\times\rtimes p^{\mZ}$ acts on the set of rt-pairs as
$$(x,\alpha)\;\stackrel{(\kappa,n)}{\longmapsto}\; (x\circ(
\theta^n_{\kappa})^{-1},n\alpha),\qquad\mbox{for all $(\kappa,n)\in
  k^\times\rtimes p^{\mZ} $}.$$
(See \ref{ExZ1p} for the definition of $\theta^n_{\kappa}$
  and \ref{PerfChar} for the action of $n\in\mZ[1/p]$ on $\bX$.)
\item If for one $\alpha:\bT\to\bG_m$ we have two rt-pairs $(x,\alpha)$ and $(z,\alpha)$, then $z=x\circ \theta^1_\kappa$ for some $\kappa\in k^\times$.
\item Consider a reductive group $G$ with $G_{\perf}\cong \bG$, with maximal torus $T$ which perfects to $\bT$. Consider an rt-pair $(x,\alpha)$ for which $x$ is the perfection of some $x_0:\mG_a\hookrightarrow G$, then $\alpha$ is the perfection of a root homomorphism $T\to\mG_m$ and $x_0$ is an inclusion of the corresponding root subgroup.

\item For every rt-pair $(x,\alpha)$, there exists $n\in p^{\mZ}$ such that $x\circ \theta_1^n$ is the perfection of the inclusion of a root subgroup $\mG_a\hookrightarrow G$ and $n^{-1}\alpha$  is the perfection of the corresponding root homomorphism $T\to\mG_m$.
\end{enumerate}
\end{lemma}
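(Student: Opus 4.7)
The plan is to establish the parts sequentially, with (3) the substantial step and (4) reducing to (3) via Lemma~\ref{LemGHperf}. For part (1), set $y = x \circ (\theta^n_\kappa)^{-1}$. Since $\theta^n_\kappa(\blam) = \kappa \blam^n$ satisfies $\theta^n_\kappa(a\bmu) = a^n \theta^n_\kappa(\bmu)$ for $a \in \bG_m$, the inverse satisfies $(\theta^n_\kappa)^{-1}(a^n \bmu) = a\,(\theta^n_\kappa)^{-1}(\bmu)$. Combining this with the rt-pair square for $(x,\alpha)$ and substituting $\bnu = (\theta^n_\kappa)^{-1}(\bmu)$ directly yields the rt-pair square for $(y, n\alpha)$.

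For (2), fix a reductive $G$ with $G_{\perf} \cong \bG$ and a maximal torus $T < G$ perfecting to $\bT$ (as in \ref{pinnings}). Both $x(\bG_a)$ and $z(\bG_a)$ are perfect subgroups of $\bG$ isomorphic to $\bG_a$ and normalised by $\bT$ with weight $\alpha$. By Theorem~\ref{ThmSub}(2), Proposition~\ref{PropDim1}(2), and the equality $\bT(k) = T(k)$ on $k$-points, each is the perfection of a $T$-normalised $\mG_a$-subgroup of $G$; the classical structure theory identifies such subgroups with root subgroups, parametrised by their $T$-character, and that character determines $\alpha$ via the inclusion $X(T) \hookrightarrow \bX(\bT) = X(T) \otimes \mZ[1/p]$. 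Hence $x(\bG_a) = z(\bG_a)$, so $z = x \circ \psi$ for some $\psi = \theta^n_\kappa \in \Aut(\bG_a) = k^\times \rtimes p^\mZ$. Comparing the rt-pair condition for $(z,\alpha)$ with that for $(x,\alpha)$ gives $\alpha(t) = \alpha(t)^n$ for all $t \in \bT$; since $\alpha \neq 0$ (as $\bT$ cannot centralise a non-central $\mG_a$-subgroup of the reductive $G$) and $\bX(\bT)$ is torsion-free, this forces $n = 1$, so $\psi = \theta^1_\kappa$.

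For (3), the rt-pair condition says $\bT$ normalises $x(\bG_a) = x_0(\mG_a)_{\perf}$ in $\bG$; passing to $k$-points (where $\bT(k) = T(k)$) shows $T$ normalises $x_0(\mG_a)$ in $G$. Any algebraic action of a torus $T$ on $\mG_a$ by group automorphisms is via scaling by a character, yielding a unique $\beta: T \to \mG_m$ with $t\,x_0(\mu)\,t^{-1} = x_0(\beta(t)\mu)$. Perfecting this identity and comparing with the rt-pair identity forces $\alpha = \beta_{\perf}$ by injectivity of $x$. The classical identification of $T$-normalised $\mG_a$-subgroups of $G$ as root subgroups then gives $x_0(\mG_a) = U_\beta$, so $x_0$ is an inclusion of the corresponding root subgroup.

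For (4), by Lemma~\ref{LemGHperf}, $\Hom(\bG_a, \bG) \cong \varinjlim_j \Hom(\mG_a, G)$ (transitions by precomposition with Frobenius), so $x$ is represented by some $x_0: \mG_a \to G$ at a stage $j \geq 0$. Unwinding the inverse-limit description $\bG_a = \varprojlim \mG_a^{(-j)}$ yields the identity $x \circ \theta^n_1 = (x_0)_{\perf}$ with $n = p^j$. By part (1), $((x_0)_{\perf}, n^{-1}\alpha)$ is an rt-pair, and applying part (3) gives $n^{-1}\alpha = \beta_{\perf}$ for some $\beta \in X(T)$ and identifies $x_0$ as an inclusion of the root subgroup $U_\beta$. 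The main obstacle is part (3): passing cleanly between the intrinsic rt-pair condition on $\bG$ and the classical reductive-group structure of $G$, facilitated by Theorem~\ref{ThmSub}(2) and the identification $\bT(k) = T(k)$; once (3) is secured, (4) is a straightforward rescaling, and Lemma~\ref{LemFrEasy} is available as a backup Frobenius-descent argument if needed.
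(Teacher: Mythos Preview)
Your argument is largely sound but follows a different route from the paper, and there is one genuine gap in part~(4).

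\textbf{Comparison.} The paper proves the parts in the order (1), (3), (4), (2). For (3) it does \emph{not} pass to $k$-points: instead it writes $\alpha$ as coming from some $T^{(-i)}\to\mG_m$ via Lemma~\ref{LemGHperf}, assembles a commutative square over $G$ that perfects to the rt-pair square, and then invokes Lemma~\ref{LemFrEasy} to force $i=0$. Your $k$-points argument for (3) is more elementary and avoids Lemma~\ref{LemFrEasy} entirely; this is a legitimate simplification. For (4) the paper uses Theorem~\ref{ThmSub}(2) together with Proposition~\ref{PropDim1}(2) to produce directly a \emph{monomorphism} $\mG_a\hookrightarrow G$ whose perfection differs from $x$ by an element of $\Aut(\bG_a)$. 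Finally, the paper deduces (2) from (4) in one line, using that distinct roots of $G$ are never $p$-power multiples of each other.

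\textbf{The gap.} In your part (4), Lemma~\ref{LemGHperf} only produces a homomorphism $x_0:\mG_a\to G$ at some stage~$j$; it does not guarantee that $x_0$ is a closed immersion, which is the hypothesis of your part~(3). Precomposing with further Frobenii (moving to later stages in the colimit) \emph{enlarges} the kernel, so you cannot fix this by going forward. The repair is either to argue that at the \emph{minimal} stage $j$ the kernel of $x_0$ is trivial (since $\ker(x_0)_{\perf}=\ker x=1$ forces $\ker x_0$ infinitesimal, and any nontrivial infinitesimal kernel would allow descent to stage $j-1$), or simply to follow the paper and use Theorem~\ref{ThmSub}(2) with Proposition~\ref{PropDim1}(2) to obtain the monomorphism directly. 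A related imprecision occurs in your (2): the $T$-character $\beta$ of the underlying root subgroup satisfies $\beta=n\alpha$ in $\bX$ for some $n\in p^{\mZ}$, not $\beta=\alpha$; the conclusion $x(\bG_a)=z(\bG_a)$ still follows because a reduced root system has at most one root in each $p^{\mZ}$-orbit, but you should say so.
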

\begin{proof}
Part (1) follows from a direct calculation.

For part (3), the homomorphism $\alpha:\bT\to \bG_m$ is induced from $T^{(-i)}\to \mG_m$ for some $i\in\mN$, as in Lemma~\ref{LemGHperf}.  We find a diagram
$$\xymatrix{
\mG_m\times\mG_a\ar[rr]&& \mG_a\ar@{^{(}->}[d]\\
T^{(-i)}\times\mG_a\ar[r]^-{\Fr^i\times \id}\ar[u]& T\times \mG_a\ar[r]& G
}$$
which `perfects' to 
diagram~\eqref{eqrt}. More precisely, after perfecting the above diagram and removing the automorphism which is the perfection of $\Fr^i\times \id$, we recover \eqref{eqrt}. In particular, we find that the above diagram is commutative.
It now follows from Lemma~\ref{LemFrEasy} that $\alpha$ comes from $\alpha_0:T\to\mG_m$ and it follows immediately that $\alpha_0$ is a root.

Now we prove part (4). By Theorem~\ref{ThmSub}(2) and Proposition~\ref{PropDim1}(2), there exists a group monomorphism $\mG_a\hookrightarrow G$ which perfects to $x\circ \phi^{n}_\kappa$ for some $n\in p^{\mZ}$,$\kappa\in k^\times$. By identifying $k^\times$ with $\Aut(\mG_a)$, we might as well take $\kappa=1$. The claim about $\alpha$ now follows from parts (1)
and (3).

Finally, we prove part (2). Since roots of reductive groups cannot be multiples of one another, part (4) implies that there is $n\in p^{\mZ}$ for which both $x\circ \theta_1^n$ and $z\circ \theta_1^n$ are the perfections of inclusions of the same root subgroup. Those inclusions must be the same, up to a scalar in $k^\times\cong \Aut(\mG_a)$, from which the claim follows.
\end{proof}

\begin{prop}\label{PropRed2}
Consider a reductive group $G$ corresponding to a root datum $RD$. One can extract $\mZ[1/p]\otimes RD$ from the group $G_{\perf}$. In particular, the downwards dashed arrow in \eqref{reddiag} exists.
\end{prop}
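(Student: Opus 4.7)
The plan is to extract the four pieces of a real-type $\mZ[1/p]$-root datum intrinsically from $\bG$ and verify that the result coincides with $\mZ[1/p] \otimes RD$. First I would fix a maximal perfect torus $\bT < \bG$, which by \ref{pinnings} exists, is unique up to conjugation, and arises as $T_{\perf}$ for a maximal torus $T < G$. I then set
\[
\bX := \Hom(\bT, \bG_m) \qquad\text{and}\qquad \bY := \Hom(\bG_m, \bT);
\]
by \ref{PerfChar} these are $\mZ[1/p]$-modules with a perfect pairing into $\mZ[1/p]$, and Lemma~\ref{LemGHperf} canonically identifies them with $\mZ[1/p] \otimes X(T)$ and $\mZ[1/p] \otimes Y(T)$, the character and cocharacter lattices underlying $\mZ[1/p] \otimes RD$.

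Next, I would define $\bR \subset \bX$ as the set of characters $\alpha \in \bX$ that appear as the second component of some rt-pair $(x,\alpha)$ with respect to $\bT$. Perfecting the inclusions $x_0 : \mG_a \hookrightarrow G$ of the classical root subgroups, together with their roots, produces rt-pairs by construction, so every $\alpha_0 \in R$ (viewed inside $\bX$ via $X(T) \hookrightarrow \bX$) lies in $\bR$. Conversely, Lemma~\ref{Lemrt}(4) shows that every rt-pair arises, after rescaling by some $n \in p^{\mZ}$, from a classical root subgroup. Combined with $R = -R$ and $(\mZ[1/p])^\times = \{\pm p^n : n \in \mZ\}$, this yields $\bR = (\mZ[1/p])^\times \cdot R$, matching the root set of $\mZ[1/p] \otimes RD$.

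For $\bR^\vee$ and the bijection $\bR \to \bR^\vee$: given $\alpha \in \bR$, I would pick any rt-pair $(x, \alpha)$ and apply Lemma~\ref{Lemrt}(4) to obtain $n \in p^{\mZ}$ and $\alpha_0 \in R$ with $n\alpha_0 = \alpha$, and then set $\alpha^\vee := n^{-1}\alpha_0^\vee \in \bY$, using the classical coroot $\alpha_0^\vee \in Y(T) \subset \bY$. Well-definedness will rest on two observations. First, if $(n', \alpha_0')$ is an alternative choice, then $\alpha_0' = (n/n')\alpha_0$ is a nonzero $p^{\mZ}$-multiple of $\alpha_0$ lying in $R$; since the root system of $G$ is reduced and $-1 \notin p^{\mZ}$, this forces $n = n'$ and $\alpha_0 = \alpha_0'$. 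Second, any other rt-pair $(z, \alpha)$ with the same $\alpha$ satisfies $z = x \circ \theta^1_\kappa$ for some $\kappa \in k^\times$ by Lemma~\ref{Lemrt}(2), and $\theta^1_\kappa$ merely rescales the underlying classical root subgroup inclusion without altering the root or coroot.

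The hardest step will be the well-definedness of $\alpha \mapsto \alpha^\vee$: it hinges on the rigidity of reduced root systems combined with the asymmetry $-1 \notin p^{\mZ}$. Once this is in place, a direct check shows the resulting quadruple $(\bX, \bR, \bY, \bR^\vee)$ reproduces the pairing, bijection, and reflection action of $\mZ[1/p] \otimes RD$ identically. Changing the initial choice of $\bT$ conjugates the whole quadruple by an element of $\bG$, so the isomorphism class of the $\mZ[1/p]$-root datum extracted from $\bG$ is canonical, supplying the downwards dashed arrow in \eqref{reddiag}.
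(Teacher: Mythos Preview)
Your treatment of $\bX$, $\bY$, and $\bR$ coincides with the paper's: fix $\bT$, take perfect characters and cocharacters, and define $\bR$ via rt-pairs. The divergence is in the coroot map, and this is where your argument has a genuine gap.

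The purpose of the proposition is to produce the downward dashed arrow $\pRG\to\pRD$; concretely, one must show that the constructed $\mZ[1/p]$-root datum depends only on the isomorphism class of $\bG$, not on the auxiliary choice of a reductive $G$ with $G_{\perf}\cong\bG$. Your definition of $\alpha^\vee$ is $n^{-1}\alpha_0^\vee$, where $\alpha_0\in R$ is a classical root of the \emph{chosen} $G$ and $\alpha_0^\vee\in Y(T)$ is its classical coroot. This is not intrinsic to $\bG$: you have imported the coroot from $G$'s root datum, which is precisely the data whose $\mZ[1/p]$-extension you are trying to recover. Your well-definedness checks address only the choice of rt-pair and of $\bT$, never the choice of $G$; but independence from $G$ is exactly what the downward arrow requires. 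As written, the argument is circular.

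The paper avoids this by defining $\alpha^\vee$ intrinsically. For $\alpha\in\bR$ it forms the minimal perfect subgroup $\bH<\bG$ containing the images of the two copies of $\bG_a$ attached to $\pm\alpha$ (well-defined by Lemma~\ref{Lemrt}(2) and Theorem~\ref{ThmSub}(2)); this $\bH$ is perfected $SL_2$ or $PGL_2$, and $\bT\cap\bH$ singles out a cocharacter $\bG_m\to\bT$ up to $\mZ[1/p]^\times$, which is then pinned down by the requirement $\langle\alpha,\alpha^\vee\rangle=2$. This construction uses only $\bG$, and the paper then checks it agrees with the classical $\alpha_0^\vee$ when one does happen to pick a $G$. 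That verification step is what legitimises the comparison with $\mZ[1/p]\otimes RD$; in your approach the comparison is tautological but the intrinsic nature is missing.
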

\begin{proof}
We construct a $\mZ[1/p]$-root datum $(\bX,\bR,\bY,\bR^\vee)$ from $\bG:=G_{\perf}$. It will follow from the construction that $(\bX,\bR,\bY,\bR^\vee)$ is isomorphic to $\mZ[1/p]\otimes RD$.

First we let $\bT$ be a maximal perfect torus in $\bG$. Recall $\bT$ is the perfection of a maximal torus $T<G$ (and hence unique up to conjugation). We choose such a $T$.
We set $\bX=\bX(\bT)$ and $\bY=\bY(\bT)$, with (non-degenerate) pairing~\eqref{pairing}. We denote the root datum of $G$ corresponding to $T$ by $(X,R,Y,R^\vee)\cong RD$.

Following Lemma~\ref{LemGHperf} we find
\begin{equation}\label{XbX}\mZ[1/p]\otimes X=\mZ[1/p]\otimes\Hom(T,\mG_m))\xrightarrow{\sim}\varinjlim\Hom(T^{(-i)},\mG_m))\xrightarrow{\sim}\Hom(\bT,\bG_m)=\bX.\end{equation}

We define $\bR\subset \bX$ as the set of $\alpha$ for which there exists an rt-pair $(x,\alpha)$. It follows from Lemma~\ref{Lemrt}(1) and (4) that $\bR\subset p^{\mZ}R$, under $R\subset X\subset \bX$ from~\eqref{XbX}. That $R\subset\bR$, so $p^{\mZ}R\subset\bR$ by Lemma~\ref{Lemrt}(1), follows from perfecting the root homomorphisms into $G$. In conclusion, $\bR=p^{\mZ}R$.

Having defined $(\bX,\bR,\bY)$, we now define the injection $-^{\vee}:\bR\to \bY$ completing the root datum. Choose $\alpha\in R\subset \bR$. Denote by $\bH$ the minimal perfect subgroup of $\bG$ which contains the images of the two morphisms $\bG_a\to\bG$ corresponding to $\alpha,-\alpha$. That the images only depend on $\alpha,-\alpha$ follows from Lemma~\ref{Lemrt}(2). That there exists such a minimal $\bH<\bG$ follows from the noetherian property of $G$ and Theorem~\ref{ThmSub}(2). The latter also shows that $\bH$ must be isomorphic to $(SL_2)_{\perf}$ or $(PGL_2)_{\perf}$ and that $\bT\cap\bH$ is a (maximal) torus in $\bH$. We use this to define $\bG_m\to \bT$, determined up to $\Aut(\bG_m)=\mZ[1/p]^{\times}$, either as the inclusion of this maximal torus of $(SL_2)_{\perf}$ or by similarly restricting the homomorphism $(SL_2)_{\perf}\tto (PGL_2)_{\perf}\hookrightarrow \bG$. Finally, we can then define $\alpha^\vee:\bG_m\to\bT$ as the unique such morphism for which composition with $\alpha:\bT\to\bG_m$ yields $2\in \End(\bG_m)$. By construction, $\alpha^\vee$ is defined independently of $G$, but clearly corresponds to the direct definition via $\bG=G_{\perf}$. For $p^i\alpha$, we set $(p^i\alpha)^\vee=p^{-i}\alpha^\vee$, which extends the definition to $\bR=p^{\mZ}R$.
\end{proof}

\begin{remark}
An alternative to the proof of Proposition~\ref{PropRed2} is given by the proof of $(a)\Rightarrow (b)\Rightarrow(c)$ in Theorem~\ref{ThmBG}. However, the latter uses deep results about 2-compact groups and \'etale homotopy types of reductive groups, hence it is preferable to have this direct proof.
\end{remark}

\subsection{Isogenies} \label{subsec:isogenies}
We establish a connection between isogenies of perfectly reductive groups and our notion of isogenies of $\mZ[1/p]$-root data from Section~\ref{SecIso1}.

\begin{theorem}
Consider two perfectly reductive groups $\bG_1$ and $\bG$ with perfect maximal tori $\bT_1,\bT$ and the corresponding $\mZ[1/p]$-root data $(\bX_1,\bR_1,\bY_1,\bR^\vee_1)$ and $(\bX,\bR,\bY,\bR^\vee)$.
There is a bijection between the sets of
\begin{enumerate}
\item[(a)] Isogenies $(\bX,\bR,\bY,\bR^\vee)\to (\bX_1,\bR_1,\bY_1,\bR^\vee_1)$;
\item[(b)] Equivalence classes of isogenies $\bG_1\to \bG$ which send $\bT_1$ to $\bT$, where two isogenies are equivalent if one is obtained from the other by composition with an inner automorphism effected by an element of $\bT_1(k)=T_1(k)$.
\end{enumerate}
\end{theorem}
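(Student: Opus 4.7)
The strategy is to reduce the claim to the classical Chevalley--Steinberg isogeny theorem, using the bijection of Theorem~\ref{ThmClass} and carefully tracking the Frobenius twists and powers of $p$ that translate between the classical and perfected settings. Note that $\bT_1(k) = T_1(k)$ since $k$ is perfect and $\bT_1 = \varprojlim T_1^{(-i)}$ on $k$-points.

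\textbf{Construction of the map (b)$\to$(a).} Given an isogeny $\varphi:\bG_1\to\bG$ with $\varphi(\bT_1)=\bT$, its restriction to perfect maximal tori is an isogeny of perfect tori (its kernel is contained in the finite kernel of $\varphi$), so it induces an injection $\varphi^\ast:\bX\hookrightarrow\bX_1$ whose dual $\varphi^\vee:\bY_1\to\bY$ is also injective. To establish Definition~\ref{definition_isogeny}(2), I would proceed via rt-pairs: for each $\alpha\in\bR$, the composition of the root subgroup inclusion $\bG_a\hookrightarrow\bG_1$ associated to a lift of $\alpha$ with $\varphi$ produces an rt-pair for $\bG$, and by Lemma~\ref{Lemrt}(4) together with the description $\bR=p^{\mZ}R$ from the proof of Proposition~\ref{PropRed2}, this identifies $\varphi^\ast(\alpha)$ with an element of $\bR_1$ (and conversely every element of $\bR_1$ is hit, by dimension considerations applied to the surjective map on Lie-type rank-one perfect subgroups $\bH_\alpha$). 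Condition~(3) is then the identity $\varphi^\vee(\varphi^\ast(\alpha)^\vee)=\alpha^\vee$ read off by restricting $\varphi$ to $\bH_\alpha\cong (SL_2)_{\perf}$ or $(PGL_2)_{\perf}$ and using the intrinsic definition of coroot given in Proposition~\ref{PropRed2}.

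\textbf{Construction of the map (a)$\to$(b).} Given $\varphi:(\bX,\bR,\bY,\bR^\vee)\to(\bX_1,\bR_1,\bY_1,\bR_1^\vee)$, fix reductive models $\bG\cong G_{\perf}$, $\bG_1\cong (G_1)_{\perf}$ together with compatible classical maximal tori $T\to\bT$, $T_1\to\bT_1$ realising the lattice identifications of~\eqref{XbX}. By Example~\ref{RemIsog}, for sufficiently large $j$ the map $p^j\varphi$ restricts to a classical $p$-isogeny $\psi: RD\to RD_1$ of $\mZ$-root data. By the classical Chevalley--Steinberg theorem (\cite[Theorem~1.5]{St}), $\psi$ is realised by a classical isogeny $f:G_1\to G$ of reductive groups sending $T_1$ to $T$, uniquely determined up to conjugation by $T_1(k)$. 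Perfection produces $f_{\perf}:\bG_1\to\bG$, which is an isogeny by Lemma~\ref{CorSES} and Example~\ref{Ex1Perf}(1); composing with the automorphism $\Phi^{-j}$ from~\ref{pZ} (well defined since $\bG$ is defined over $\mF_p$, equivalently, since $p\in\mZ[1/p]^\times$ acts invertibly on $\bX$) absorbs the factor $p^j$ and yields an isogeny whose induced root datum isogeny is exactly $\varphi$. Independence of the choice of $j$ is checked by comparing the isogenies obtained for $j$ and $j+1$: they differ by a Frobenius twist, which becomes invisible after perfection.

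\textbf{Mutual inverseness and the equivalence relation.} The composition (a)$\to$(b)$\to$(a) is the identity by construction, since extracting the character lattice map from a perfected classical isogeny recovers the $\mZ[1/p]$-extension of the original $\mZ$-root datum isogeny. For (b)$\to$(a)$\to$(b), two isogenies $\varphi,\varphi':\bG_1\to\bG$ that yield the same $\mZ[1/p]$-root datum isogeny come, after a common Frobenius twist, from two classical isogenies inducing the same classical $p$-isogeny; the uniqueness clause of the classical theorem then provides $t\in T_1(k)$ with $\varphi'=\Int(t)\circ\varphi$, which is exactly the equivalence stipulated in~(b).

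\textbf{Main obstacle.} The technical heart is bookkeeping: an isogeny of perfect groups need not come from a classical isogeny of the chosen reductive models, but only from a morphism $G_1^{(-i)}\to G$ up to further twisting (as in Lemma~\ref{LemGHperf} and Proposition~\ref{PropIso}), and an isogeny of $\mZ[1/p]$-root data is only a $\mZ$-isogeny up to a scaling by $p^j$ (Example~\ref{RemIsog}). Showing these two ambiguities match up cleanly under the perfection functor, and that the residual indeterminacy is precisely $T_1(k)$-conjugation, is where care is needed; the groundwork for this has already been laid in Proposition~\ref{PropIso} and Proposition~\ref{PropRed2}, so the argument should reduce to a diagram chase combining those results with \cite[Theorem~1.5]{St}.
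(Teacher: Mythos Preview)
Your overall strategy---reducing to Steinberg's classical isogeny theorem via the bookkeeping of Frobenius twists and $p$-powers---matches the paper's. The paper streamlines this by introducing an explicit intermediate set (c): equivalence classes of pairs $(\theta,i)$ with $\theta$ a classical $p$-isogeny of $\mZ$-root data and $i\in\mZ$, modulo $(\theta,i)\sim(p\theta,i-1)$. Then (a)$\leftrightarrow$(c) is immediate from Example~\ref{RemIsog}, while (b)$\leftrightarrow$(c) follows by observing (via Lemma~\ref{LemGHperf}) that any isogeny $\bG_1\to\bG$ descends to a classical isogeny $G_1^{(-i)}\to G$ for large $i$---the kernel is of finite type and perfects to a finite scheme, hence is already finite---after which \cite[Theorem~1.5]{St} applies directly.

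This organisation bypasses your direct rt-pair construction in (b)$\to$(a), which as written is muddled: you start with $\alpha\in\bR$, a root of $\bG$, but speak of ``the root subgroup inclusion $\bG_a\hookrightarrow\bG_1$ associated to a lift of $\alpha$'', which presupposes exactly the compatibility $\varphi^\ast(\bR)\subset\bR_1$ you are trying to establish. The workable direction is the opposite one: push a root subgroup $x_1:\bG_a\hookrightarrow\bG_1$ forward along $\varphi$, note that $\varphi\circ x_1$ is still injective because $\ker\varphi$ is central and hence meets no root subgroup, and then read off a root of $\bG$; this shows $\bR_1\subset\varphi^\ast(\bR)$, but the reverse inclusion and bijectivity still require an argument beyond ``dimension considerations''. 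The paper's route through (c) avoids all of this, and also avoids having to invoke the automorphism $\Phi^{-j}$ explicitly.
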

\begin{proof}
Let $G,G_1$ denote reductive groups which perfect to $\bG,\bG_1$, with maximal tori $T,T_1$ which perfect to $\bT,\bT_1$, and denote their root data by  $(X,R,Y,R^\vee)$ and $ (X_1,R_1,Y_1,R^\vee_1)$.

We will prove both sets (a) and (b) are in bijection with the set of
\begin{enumerate}
\item[(c)] Equivalence classes of pairs $(\theta,i)$ of an isogeny $\theta:(X,R,Y,R^\vee)\to (X_1,R_1,Y_1,R^\vee_1)$ (in the sense of \cite[\S 1]{St}) and $i\in\mZ$, where the equivalence relation is generated by $(\theta,i)\sim (p\theta,i-1)$.
\end{enumerate}

That sending a pair $(\theta,i)$ to the extension of scalars along $\mZ\to\mZ[1/p]$ of $p^i\theta$ yields a bijection between (c) and (a) can be easily derived from Remark~\ref{RemIsog}.

An isogeny $\bG_1\to \bG$ yields a faithfully flat morphism $G^{(-i)}_1\to G$ for high enough $i$, via Lemma~\ref{LemGHperf}. Its kernel is an affine group scheme of finite type which perfects to a finite group scheme. It must therefore be a finite group scheme and $G^{(-i)}_1\to G$ is an isogeny. This principle allows us to establish a bijection between the set of isogenies $\bG_1\to \bG$ and the set of equivalence classes of isogenies $G_1^{(-i)}\to G$, with equivalence generated by the condition that $\phi:G_1^{(-i)}\to G$ be equivalent to $\phi\circ \Fr: G_1^{(-i-1)}\to G$.

The above connection between isogenies $\bG_1\to \bG$ and equivalence classes of isogenies $G_1\to G$ allows us to use the classical Isogeny Theorem \cite[1.5]{St} to establish the bijection between (b) and (c).
\end{proof}

\section{Perfected representation theory}\label{SecRep}

Let $k$ be an algebraically closed field of characteristic $p$.

\subsection{Simple and induced modules and block structure}

Let $\bG$ be a perfectly reductive group, $\bB$ a perfect Borel subgroup and $\bT <\bB$ a maximal perfect torus. 
We consider the set $\bR^+\subset\bR$ of positive roots, which are the ones for which the corresponding $\bG_a\to \bG$ {\bf does not} land in $\bB$ ({\it i.e.} we let $\bB$ be the negative Borel).

We set $\bX=\bX(\bT)$ and $\bX_+\subset\bX$ the subset of $\blam\in\bX$ which satisfy $\langle \blam,\alpha^\vee\rangle\ge 0$ for all $\alpha\in\bR^+$. We have a canonical bijection $\{\blam\mapsto k_{\blam}\}$ between $\bX$ and the set of isomorphism classes of simple $\bB$-representations (which are all one-dimensional). 

\begin{theorem}\label{ThmSimples}
\begin{enumerate}
\item The representation
$$\bnab(\blam):=\Ind^{\bG}_{\bB}k_\blam\;\in\;\Rep^{\infty}\bG$$
is zero if $\blam\not\in \bX_+$. If $\blam\in \bX_+$, it has simple socle, which we denote by $\bL(\blam)$.
\item The above association $\blam\mapsto \bL(\blam)$ is a bijection between $\bX_+$ and the set of isomorphism classes of simple representations in $\Rep \bG$.
\end{enumerate}
\end{theorem}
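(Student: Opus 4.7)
The plan is to reduce to the classical highest weight theory via the interpretation $\Rep(\bG) \cong \varinjlim \Rep G$ from \eqref{TannLim1} and the induction description in Corollary~\ref{PerfectInduction}. By \ref{pinnings} we can write $\bG = G_{\perf}$, $\bB = B_{\perf}$, $\bT = T_{\perf}$ for a reductive group $G$ with Borel $B$ and maximal torus $T$. By Lemma~\ref{LemGHperf}, every character $\blam : \bT \to \bG_m$ arises as the perfection of some $\lambda : T \to \mG_m$, i.e.\ $k_\blam \cong k_\lambda[j]$ in the notation of \ref{notation} for suitable $j \in \mN$. Since $\bR = p^{\mZ} R$ with $(p^i \alpha_0)^\vee = p^{-i}\alpha_0^\vee$, the pairing $\langle p^{-j}\lambda, p^i \alpha_0^\vee \rangle = p^{-j-i}\langle \lambda, \alpha_0^\vee \rangle$ shows $\blam \in \bX_+$ if and only if $\lambda \in X_+(T)$ (with respect to $B$), a condition which is preserved by multiplication by $p$.

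Applying Corollary~\ref{PerfectInduction} (and checking that $\Ind^G_B k_{\lambda^{(i)}}$ is finite dimensional by classical theory) we obtain
\[
\bnab(\blam) \;\cong\; \varinjlim_{i \geq j}\bigl(\Ind^G_B k_{\lambda^{(i-j)}}\bigr)[i],
\]
where the transition maps are those of Remark~\ref{RemNot}(2). By classical Kempf vanishing, each module $\Ind^G_B k_{\lambda^{(i-j)}} = \nabla(p^{i-j}\lambda)$ is zero if $\lambda \notin X_+$, and otherwise is non-zero with simple socle $L(p^{i-j}\lambda) = L(\lambda)^{(i-j)}$. The transition maps in the directed system are non-zero (Remark~\ref{RemNot}(2)) and hence must restrict to the unique embeddings between these simple socles. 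Defining $\bL(\blam) := L(\lambda)[j] \in \Rep(\bG)$, we thus obtain an embedding $\bL(\blam) \hookrightarrow \bnab(\blam)$ in the limit. Part (1) then follows from the observation that a non-zero submodule of $M[i]$ in $\varinjlim \Rep G$ is detected already at some finite stage $\Rep G^{(-l)}$, so that simplicity of each socle $L(\lambda)^{(i-j)}$ upgrades to simplicity of $\bL(\blam)$, and any non-zero submodule of $\bnab(\blam)$ must contain $\bL(\blam)$.

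For part (2), the weight $\blam$ can be recovered from $\bL(\blam)$ as the highest weight of its $\bT$-character (among weights of $\bL(\blam)$ under the dominance order induced by $\bR^+$), so the assignment $\blam \mapsto \bL(\blam)$ is injective. For surjectivity, any simple $\bG$-module is isomorphic to $M[i]$ for some simple $M \in \Rep G^{(-i)}$ by general properties of filtered colimits of abelian categories; classically $M \cong L(\mu)$ for some $\mu \in X_+(T^{(-i)})$, and tracing through the identifications gives $M[i] \cong \bL(\blam)$ for the corresponding $\blam \in \bX_+$. The identification $L(\lambda)[j] \cong L(p\lambda)[j+1]$ of Frobenius twist with index shift matches the redundancy $p^{-j}\lambda = p^{-j-1}(p\lambda)$ in the parametrisation, so the bijection is well defined. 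The main technical point will be step two: justifying rigorously that passing to the directed colimit preserves the simplicity of the socle and the socle embedding — essentially verifying that the directed system of socle inclusions $L(\lambda)^{(i-j)} \hookrightarrow \nabla(p^{i-j}\lambda)$ is itself a directed system in $\Rep G$, not merely in $\Rep(\bG)$.
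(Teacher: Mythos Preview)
Your approach is essentially the paper's: reduce via \eqref{TannLim1} and Corollary~\ref{PerfectInduction} to the classical highest weight theory for $G$, writing $\bnab(\blam)\cong\varinjlim \nabla(p^i\blam)[i]$, and classify simples as $L(\mu)[i]$.

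You correctly flag the one genuine gap, and it is exactly where the paper does the real work. Knowing that the transition maps $\nabla(p^i\blam)^{(1)}\to\nabla(p^{i+1}\blam)$ are non-zero (Remark~\ref{RemNot}(2)) does \emph{not} by itself imply they restrict to embeddings of socles: a non-zero map out of a module with simple socle can kill the socle. The paper closes this as follows. First, $\soc\bigl(\nabla(p^i\blam)^{(1)}\bigr)=L(p^{i+1}\blam)$ is simple: every composition factor of $\nabla(p^i\blam)^{(1)}$ is a Frobenius twist $L(\nu)^{(1)}$, and fullness of $(-)^{(1)}$ on $\Rep G$ (Lemma~\ref{LemTann}(1), or the $l=0$ case of Remark~\ref{ExtInj}) gives $\Hom(L(\nu)^{(1)},\nabla(p^i\blam)^{(1)})\cong\Hom(L(\nu),\nabla(p^i\blam))=k\delta_{\nu,p^i\blam}$. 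Second, $\nabla(p^{i+1}\blam)$ is the injective hull of $L(p^{i+1}\blam)$ in the Serre subcategory of modules with weights $\le p^{i+1}\blam$, and $\nabla(p^i\blam)^{(1)}$ lies in this subcategory. Hence any non-zero map is unique up to scalar and injective. With injectivity in hand, the paper computes $\Hom_{\bG}(\bL(\bmu),\bnab(\blam))\cong\varinjlim\Hom_G(L(p^i\bmu),\nabla(p^i\blam))=k\delta_{\blam,\bmu}$ directly, which gives the simple socle statement without further argument about colimits of socles.

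Your part (2) is fine and matches the paper. One small point: rather than appealing to ``highest weight in the $\bT$-character'' for injectivity of $\blam\mapsto\bL(\blam)$, the paper gets it for free from the Hom computation above, which also identifies $\bL(\blam)$ intrinsically as $\soc\bnab(\blam)$.
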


\begin{proof}
We choose a reductive group $G$ with $G_{\perf}\cong \bG$ and maximal torus and Borel subgroup $T< B< G$ which perfect to $\bT$ and $\bB$.  
We will use the notation in \ref{notation}.
By equivalence~\eqref{TannLim1}, every simple object in $\Rep \bG$ is of the form $L(\mu)[i]$ for some $\mu\in X_+$ and $i\in\mN$. Since $L(\mu)^{(1)}\cong L(p\mu)$, see \cite[II.3]{Jantzen}, we can define unambiguously the simple object $\tilde{L}(p^{-i}\mu):=L(\mu)[i]$ for $i\in\mN$ and $\mu\in X_+$. This clearly gives a bijection between $\bX_+$ and the set of isomorphism classes of simple objects in $\Rep \bG$.

By Corollary~\ref{PerfectInduction}, we have
\begin{equation}\label{LimNabla}\bnab(\blam)\;\cong\;\varinjlim \nabla(p^i\blam)[i],\end{equation}
where the chain of which we take the limit starts at $i$ where $p^i\blam\in X\subset\bX$. It follows now from \cite[II.2]{Jantzen} that $\bnab(\blam)=0$ whenever $\blam\not\in \bX_+$.

Now consider $\blam\in\bX_+$. The morphisms in the direct system for \eqref{LimNabla} are not zero by Remark~\ref{RemNot}(2). Hence, the defining morphisms $\nabla(p^i\blam)^{(1)}\to \nabla(p^{i+1}\blam)$ are unique (up to scalar) and injective since $\nabla(p^i\blam)^{(1)}$ has socle $L(p^{i+1}\blam)$ by Remark~\ref{ExtInj} below and  $\nabla(p^{i+1}\blam)$ is the injective hull of this socle in a Serre subcategory containing $\nabla(p^i\blam)^{(1)}$. In particular, for $\blam,\bmu\in\bX_+$, we have
$$\Hom_{\bG}(\tilde{L}(\bmu), \bnab(\blam))\cong\varinjlim \Hom_{G}(L(p^i\bmu),\nabla(p^i\blam))\cong k\delta_{\blam,\bmu}.$$
We can therefore identify $\tilde{L}(\bmu)$ with the socle of $\bnab(\bmu)$.
\end{proof}

\begin{remark}\label{RemDL}
\begin{enumerate}
\item We can alternatively construct the modules $\bnab(\blam)$ as the global sections of line bundles on $\bG/\bB\cong (G/B)_{\perf}$.
\item It follows for instance from the proof of Theorem~\ref{ThmSimples} that
$$[\bnab(\blam):\bL(\bmu)]=[\bnab(p^j\blam):\bL(p^j\bmu)],\qquad\mbox{for all }\; \blam,\bmu\in \bX_+\mbox{ and }j\in\mZ.$$
In fact, the proof even shows that
\begin{equation}\label{eqLim}
[\bnab(\blam):\bL(\bmu)]\;=\;\lim_{m \to \infty}
[\nabla(p^m\lambda):L(p^m\mu)].
\end{equation}
\end{enumerate}
\end{remark}

 {The sequence on the right-hand side in equation~\eqref{eqLim} is monotone increasing, and one can ask when it is bounded. For $G$ of rank $1$ it is clearly bounded, but the following rank 2 example was communicated to us by Stephen Donkin.}

 {
\begin{example}[Donkin] \label{ex:Donkin}
Set $p=2$ and let $E$ be a 4-dimensional space with symplectic form, so that $E$ is the natural representation of $Sp(E)\simeq Sp_{4}$. Let $\lambda$ be such that $E\simeq L(\lambda)$. Then
$$[\nabla(2^m\lambda):L(0)]\;=\; 2^{m-1}+1,\quad\mbox{so}\quad [\bnab(\blam):\bL({\mathbf 0})]\;=\;\infty .$$
Indeed, more generally $\nabla(m\lambda)$ is the symmetric power $S^mE$, and 
by \cite[Lemma~4.6]{EK}, we have
\begin{eqnarray*}
[S^{2m}E:L(0)]&=&[(S^{m-2}E)^{(1)}:L(0)]+[(S^mE)^{(1)}:L(0)]+[\wedge^2E\otimes (S^{m-1}E)^{(1)}:L(0)]\\
&=&[S^{m-2}E:L(0)]+[S^mE:L(0)]+2[S^{m-1}E:L(0)],
\end{eqnarray*}
where we used that $[\wedge^2E]=2[L(0)]+[L(\omega)]$,
with $\omega$ the second fundamental weight, so that $[L(\omega)\otimes M^{(1)}:L(0)]=0$ for all representations $M$.
It follows by induction on $m$ that
$$[S^{2m}E:L(0)]\;=\;m+1.$$
\end{example}
}

\begin{remark}\label{ExtInj}
Let $G$ be a reductive group. The canonical morphism
$$\Ext^\ast_{G}(M,N)\to \Ext^\ast_{G}(M^{(1)},N^{(1)})$$
is injective. This is proved in \cite[II.10.14]{Jantzen} if $(p-1)\rho\in X$ (e.g. $p\not=2$).

If $\rho\not\in X$, we can extend $X\subset X'\subset \mQ\otimes X$ by taking the lattice maximal $X'$ for which $\langle -,\alpha^\vee\rangle$ still takes values in $\mZ$ for every $\alpha^\vee\in R^\vee$. In particular $\rho\in X'$. Taking the appropriate $Y'\subset Y$ yields a new root datum $(X',R,Y', R^\vee)$ corresponding to a reductive group $G'$. By \cite[Theorem~1.5]{St}, there is an isogeny $G'\to G$. The claim then reduces to the previous case via an obvious commutative square.

\end{remark}

As in any category of finite dimensional modules over a coalgebra over a field, the blocks in $\Rep\bG$ are determined by the first extensions between simple objects, { see \cite[\S ii.7.1]{Jantzen}.}

\begin{theorem}\label{ThmBlock}
For $\blam,\bmu\in\bX^+$, the simple representations $\bL(\blam)$ and $\bL(\bmu)$ are in the same block of $\Rep\bG$ if and only if $\blam-\bmu\in \mZ[1/p]\bR=\mZ\bR$.
\end{theorem}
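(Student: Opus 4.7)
The proof has a natural split: the ``only if'' direction comes from a weight-space argument, while the ``if'' direction reduces to the classical linkage principle for $\Rep G$ after rescaling by a power of $p$. Throughout, I will use $\mZ\bR = \mZ[1/p]R$, which holds because $\bR = p^{\mZ}R$ (as established in the proof of Proposition~\ref{PropRed2}).

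For the ``only if'' direction, I would first show that every $\bT$-weight of $\bL(\blam)$ lies in the coset $\blam + \mZ\bR$. Since $\bL(\blam) \hookrightarrow \bnab(\blam) \cong \varinjlim_i \nabla(p^i\blam)[i]$ from \eqref{LimNabla}, and the classical weights of $\nabla(p^i\blam) \in \Rep G^{(-i)}$ lie in $p^i\blam + \mZ R$, the rescaling $M \mapsto M[i]$ (which divides weights by $p^i$ in our coordinates on $\bX = \mZ[1/p]\otimes X$) places the weights of $\nabla(p^i\blam)[i]$ in $\blam + p^{-i}\mZ R$. Taking the union over $i$ yields $\blam + \mZ[1/p]R = \blam + \mZ\bR$. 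Because the adjoint action of $\bT$ on $\bG$ only involves weights in $\mZ\bR$, this weight estimate propagates to a block decomposition $\Rep \bG = \bigoplus_{c \in \bX/\mZ\bR} (\Rep \bG)_c$, so $\blam - \bmu \notin \mZ\bR$ forces $\bL(\blam)$ and $\bL(\bmu)$ into distinct blocks.

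For the converse, assume $\blam - \bmu \in \mZ\bR$. I would choose $N \in \mN$ large enough that $\lambda := p^N \blam \in X_+$ and $\mu := p^N\bmu \in X_+$, so that $\lambda - \mu \in \mZ R$, and hence $p\mu - p\lambda \in p\mZ R$. Since $p\mZ R$ is precisely the translation sublattice of the affine Weyl group $W_p = W \ltimes p\mZ R$ acting on $X$ by the dot action, we obtain $p\mu \in W_p \cdot p\lambda$. Invoking the classical linkage principle in its strong form---that blocks of $\Rep G$ coincide with the intersections of $W_p$-orbits with $X_+$---this yields a chain $L(p\lambda) = L(\nu_0), L(\nu_1), \ldots, L(\nu_r) = L(p\mu)$ of simple $G$-modules with $\Ext^1_G(L(\nu_i), L(\nu_{i+1})) \neq 0$ (or the reverse) at each step. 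Applying the functor $M \mapsto M[N+1]$, Remark~\ref{ExtLim} identifies $\Ext^1_{\bG}(L(\nu_i)[N+1], L(\nu_{i+1})[N+1])$ with $\varinjlim_j \Ext^1_G(L(\nu_i)^{(j)}, L(\nu_{i+1})^{(j)})$, and by Remark~\ref{ExtInj} the Frobenius-twist transition maps in this direct system are injective; hence each non-zero $j=0$ class survives to a non-zero class in the limit. The resulting chain in $\Rep \bG$ links $L(p\lambda)[N+1] \cong L(\lambda)[N] = \bL(\blam)$ to $L(p\mu)[N+1] \cong L(\mu)[N] = \bL(\bmu)$, placing them in the same block.

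The main obstacle is the reliance on the \emph{strong} form of the classical linkage principle, namely that $W_p$-orbits exhaust blocks of $\Rep G$ and not merely that blocks are contained in $W_p$-orbits (the latter being Jantzen's one-sided statement). The converse containment requires non-trivial translation-functor arguments in the classical setting, but is standard; granting this, the remaining work is essentially formal, using only the weight decomposition and the Ext-limit identities already established in the paper.
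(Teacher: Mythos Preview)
Your ``only if'' direction is fine, though the paper takes a shorter route: if $\bL(\blam)$ and $\bL(\bmu)$ are in the same block of $\Rep\bG$, then by Remarks~\ref{ExtLim} and~\ref{ExtInj} there is some $j$ with $L(p^j\blam)$ and $L(p^j\bmu)$ in the same block of $\Rep G$, and then the ordinary linkage principle (the direction that always holds) gives $p^j\blam-p^j\bmu\in\mZ R$.

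The ``if'' direction has a genuine gap. The ``strong form'' you invoke, that blocks of $\Rep G$ coincide with $W_p$-orbits in $X_+$, is \emph{false} as stated. For $G=SL_2$ the Steinberg weight $p-1$ lies in the $W_p$-orbit of $2p-1$ (they differ by a translation in $p\mZ R$), yet $L(p-1)$ is projective and alone in its block. What is standard (see \cite[II.7.2(2)]{Jantzen}) is the converse \emph{under a non-singularity hypothesis}: if $\mu\in W_p\cdot\lambda$ and there exists $\alpha\in R$ with $\langle\lambda+\rho,\alpha^\vee\rangle\not\in p\mZ$, then $L(\lambda)$ and $L(\mu)$ are in the same block.

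Fortunately your specific weights satisfy this hypothesis automatically: since $\lambda=p^N\blam\in X$, for any simple root $\alpha$ one has $\langle p\lambda+\rho,\alpha^\vee\rangle = p\langle\lambda,\alpha^\vee\rangle + 1 \equiv 1\pmod p$. This is exactly the paper's condition~(iii), and once you insert it your argument goes through. (A minor point: you also need $N$ large enough that $p^N(\blam-\bmu)\in\mZ R$, not merely $p^N\blam,p^N\bmu\in X$; having $\lambda,\mu\in X$ alone does not force $\lambda-\mu\in\mZ R$ when $X/\mZ R$ has $p$-torsion.) So the approach is essentially the paper's, but you must replace the false blanket statement with the correct conditional one and then verify the condition.
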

\begin{proof}
We resume the notation and conventions from the proof of Theorem~\ref{ThmSimples}. By Remarks~\ref{ExtLim} and~\ref{ExtInj}, $\bL(\blam)$ and $\bL(\bmu)$ are in the same block if and only if there exists $j\in\mN$ for which $p^j\blam,p^j\bmu\in X$ and $L(p^j\blam)$ and $L(p^j\bmu)$ are in the same block of $\Rep G$

Assume first that $\bL(\blam)$ and $\bL(\bmu)$ are in the same block. By the above, $p^j\blam-p^j\bmu\in \mZ R$ for some $j$, from which 
$\blam-\bmu\in \mZ[1/p]R= \mZ[1/p]\bR$ follows.

Now assume that $\blam-\bmu\in \mZ[1/p]\bR$. Then there exists $j\in\mN$ such that 
\begin{itemize}
\item[(i)] $p^j\blam,p^j\bmu\in X$, 
\item[(ii)] $p^j\blam-p^j\bmu\in p\mZ R$,
\item[(iii)] $\langle \alpha^\vee,p^j\blam+\rho\rangle\not\in p\mZ$ for some $\alpha\in R$.
\end{itemize} 
Indeed, for (iii) it suffices to take $\alpha$ simple and $j$ such that $p^j\blam\in pX$.

From (ii), it follows that $p^j\blam$ and $p^j\bmu$ are in the same ($\rho$-shifted) orbit of $p\mZ R\rtimes W$, so by (iii) and \cite[II.7.2(2)]{Jantzen}, $L(p^j\blam)$ and $L(p^j\bmu)$ are in the same block of $\Rep G$ from which the conclusion follows.
\end{proof}

\begin{remark}
Theorem~\ref{ThmBlock} can be explained by the observation that the orbits of the affine Weyl group $W\ltimes p\mZ R$ on $X$ describe most of the block decomposition in $\Rep G$, and the orbits
of
$ W\ltimes p\mZ[1/p]R$ on $\bX$ coincide with those of $\mZ[1/p]R$.
\end{remark}

\subsubsection{}\label{WeylType} Let $n$ be the length of the longest element $w_0$ of the Weyl group ({\it i.e.} the dimension of $\bG/\bB$).  We also set $$\bX_{++}\;=\;\{\blam\in \bX\,|\, \langle \blam,\alpha^\vee\rangle>0\mbox{ for all $\alpha\in \bR^+$}\}\subset\bX_+.$$ 
Another class of $\bG$-representations which seems of interest are
$$\bW(\blam)\;:=\; R^n\Ind^{\bG}_{\bB} k_{w_0(\blam)}\;\cong\;\varinjlim_i \Delta(p^i\blam -2\rho)[i],\quad\blam\in\bX_{++}$$
where the isomorphism is an instance of Corollary~\ref{PerfectInduction}, using \cite[II.5.11, Remark (1)]{Jantzen}.

{ Note that we do not use the notation ${\bm \Delta}(\blam)$ for the module $\bW(\blam)$ as the former would more logically be reserved for a pro-object dual to the ind-object $\bnab(\blam)$.}

\subsection{Generic cohomology}
In this section we show how the result from \cite{CPSV} can be formulated very elegantly in terms of perfected groups. { We also refer to \cite{BNP} for a more modern treatment of generic cohomology with sharper bounds.}

\subsubsection{}Let $G$ be an affine group scheme over $k$, which is the extension of scalars of an affine group scheme over $\mF_p$, which we also denote by $G$. Set $\mF=\overline{\mF_p}$. We can consider the forgetful functor from rational $G$-representations to representations over $k$ of the abstract group $G(\mF)$ (for instance using the homomorphism $G(\mF)\to G(k)$), which induces comparison morphisms
$$\Ext^i_{G}(M,N)\;\to\; \Ext^i_{k G(\mF)}(M,N).$$

\begin{theorem}\label{ThmGen}
Let $\bG$ be a perfectly reductive group, then the morphism
$$\Ext^i_{\bG}(M,N)\;\to\; \Ext^i_{k \bG(\mF)}(M,N)$$
is an isomorphism for all $M,N\in\Rep \bG$ and $i\in\mN$.
\end{theorem}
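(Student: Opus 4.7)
The plan is to reduce to the classical generic cohomology theorem of Cline--Parshall--Scott--van der Kallen \cite{CPSV}, via the presentation $\Rep\bG\cong\varinjlim\Rep G$ of \eqref{TannLim1} together with the interchange of $\Ext$ with filtered colimits recalled in Remark~\ref{ExtLim}. By Theorem~\ref{ThmClass} and Lemma~\ref{LemQ}, first fix a reductive group $G$ defined over $\mF_p$ such that $(G_k)_{\perf}\cong\bG$. Since $\mF$ is perfect, $\bG(\mF)=G(\mF)=\varinjlim_n G(\mF_{p^n})$ as abstract groups, so restriction to $\bG(\mF)$ makes sense on both sides of the comparison.

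Next, using the notation of \ref{notation}, write $M\cong M_0[j]$ and $N\cong N_0[j]$ for some $j\in\mN$ and $M_0,N_0\in\Rep G_k$. Remark~\ref{ExtLim} then gives
$$\Ext^i_{\bG}(M,N)\;\cong\;\varinjlim_{\ell\ge 0}\;\Ext^i_{G_k}\!\bigl(M_0^{(\ell)},\,N_0^{(\ell)}\bigr).$$
Because $\mF$ is perfect, the Frobenius twist $M_0^{(\ell)}$ restricts to a $G(\mF)$-representation canonically isomorphic to $M_0$, via the bijection $x\mapsto x^{p^\ell}$ on $\mF$ (and similarly for $N_0$). Composing the standard comparison maps with these identifications produces a natural map
$$\varinjlim_{\ell\ge 0}\;\Ext^i_{G_k}\!\bigl(M_0^{(\ell)},\,N_0^{(\ell)}\bigr)\;\longrightarrow\;\Ext^i_{kG(\mF)}(M_0,N_0)\;=\;\Ext^i_{k\bG(\mF)}(M,N).$$

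The key input is the generic cohomology theorem \cite{CPSV}, applied to the finite-dimensional rational $G$-module $\Hom_k(M_0,N_0)$ and transported to $\Ext$ groups: it asserts that the individual transition maps in the directed system on the left are eventually isomorphisms and that their stable value is precisely $\Ext^i_{k\bG(\mF)}(M,N)$. Combined with the display from the previous paragraph, this yields the theorem.

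The main obstacle is a naturality/bookkeeping check rather than any genuinely new input: one must verify that the Frobenius-twist transition maps $\Ext^i_{G_k}(M_0^{(\ell)},N_0^{(\ell)})\to\Ext^i_{G_k}(M_0^{(\ell+1)},N_0^{(\ell+1)})$ become the identity under the identifications $M_0^{(\ell)}\cong M_0$ of $G(\mF)$-modules, and that these intertwine the comparison maps compatibly, so that the assertion of \cite{CPSV} indeed gives an isomorphism on the colimit. Once this compatibility is checked, the statement is immediate, and it is exactly this packaging that makes perfected representation theory the natural framework for \cite{CPSV}.
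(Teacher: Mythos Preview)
Your approach is essentially the same as the paper's: reduce via Remark~\ref{ExtLim} to the directed system of $\Ext$ groups over the reductive group $G$, then invoke \cite{CPSV}. There are, however, two points the paper addresses that your proposal glosses over.

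First, \cite{CPSV} does not directly identify the stable value with $\Ext^i_{kG(\mF)}(M,N)$; rather it shows that for large enough $a$ and $q$ the map $\Ext^i_{G}(M_0^{(a)},N_0^{(a)})\to\Ext^i_{kG(\mF_q)}(M_0,N_0)$ is an isomorphism. One still needs to pass from the finite groups $G(\mF_{p^n})$ to $G(\mF)=\varinjlim G(\mF_{p^n})$. The paper does this via a Mittag--Leffler argument (Remark~\ref{RemLim}): the inverse system $\Ext^i_{kG(\mF_{p^n})}(M_0,N_0)$ stabilises (by \cite{CPSV}) and computes group cohomology via finite-dimensional cochain complexes, so the inverse limit is $\Ext^i_{kG(\mF)}(M_0,N_0)$. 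Your sentence ``their stable value is precisely $\Ext^i_{k\bG(\mF)}(M,N)$'' hides this step.

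Second, as the paper notes, \cite{CPSV} is stated only for semisimple groups. For a general reductive $G$ one takes the short exact sequence $N\to G\to G/N$ with $N$ the central torus and $G/N$ semisimple; since both $N$ and $N(\mF_q)$ have semisimple representation theory over $k$, the Hochschild--Serre spectral sequence collapses and the semisimple case suffices. Your proposal should flag this reduction.
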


We start the proof by pointing out a technical generality.
\begin{remark}\label{RemLim}
Consider a chain of finite abstract groups $\{H_n\,|\,n\in\mN\}$ and set $H=\varinjlim H_n$. For two finite dimensional $H$-representations $M,N$, the canonical morphism
$$\Ext^i_{kH}(M,N)\;\to\; \varprojlim \Ext^i_{kH_n}(M,N)$$
is an isomorphism. Indeed, using group cohomology, $\Ext^i_{kH}(M,N)$ is the cohomology of the inverse limit of chain complexes with cohomology $\Ext^i_{kH_n}(M,N)$. Since all vector spaces involved are finite-dimensional, the Mittag-Leffler property leads to the conclusion.
\end{remark}

\begin{proof}[Proof of Theorem~\ref{ThmGen}]
Let $G$ be a reductive group with perfection $\bG$ and recall that
$\bG(\mF)=G(\mF)$ and $\Rep \bG\cong \varinjlim \Rep G$. Without loss of generality we assume that $M, N$
  factor over the natural map $\bG \to G$.
By Remark~\ref{ExtLim}, we have
$$\Ext^i_{\bG}(M,N)\;\cong\; \varinjlim_{a}\Ext^i_{G}(M^{(a)},N^{(a)}).$$
It is proved in \cite{CPSV} that the directed system in the above limit stabilises and moreover,  {  for fixed $M,N$, for large enough $a$ and $q$, all morphisms 
$$\Ext^i_{G}(M^{(a)},N^{(a)})\to \Ext^i_{kG(\mF_q)}(M^{(a)},N^{(a)})$$ are isomorphisms. Note that $G(\mF_q)$, being a finite abstract group, is perfect in the group scheme sense. Hence, for large enough $q$, we find the composite isomorphism
$$ \varinjlim_{a}\Ext^i_{G}(M^{(a)},N^{(a)})\;\xrightarrow{\sim}\;  \varinjlim_{a}\Ext^i_{k G(\mF_q)}(M^{(a)},N^{(a)})\;\xleftarrow{\sim}\;\Ext^i_{k G(\mF_q)}(M,N).$$}

Hence also the inverse system in 
$$\varprojlim \Ext^i_{k G(\mF_{p^n})}(M,N)\;\cong\; \Ext^i_{kG(\mF)}(M,N)$$ stabilises and we find the isomorphism in the theorem.

Strictly speaking, \cite{CPSV} only deals with semisimple groups. However, for a general reductive group $G$, we have a short exact sequence $N\to G\to G/N$ with $G/N$ semisimple and $N$ a torus. Since both $N$ and $N(\mF_q)\cong C_{q-1}^{\times r}$ have semisimple representation theory over $k$, the result extends easily, for instance via a collapsing Hochschild-Serre spectral sequence.
\end{proof}

\begin{remark}
The comparison morphism in Theorem~\ref{ThmGen} is not always an isomorphism for arbitrary perfect groups, for instance
$$\Ext^1_{\bG_a}(k,k)\;\to\; \Ext^1_{k \mF^+}(k,k)$$
is the canonical inclusion { 
$$k\mZ\;\hookrightarrow\; k\hat{\mZ}\cong k\mathrm{Gal}(\mF:\mF_p)\;\hookrightarrow\; \varprojlim k \mathrm{Gal}(\mF_q:\mF_p),$$
where we used Remark~\ref{RemLim} and the linear isomorphism (with $\iota:\mF_q\hookrightarrow k$)
$$k \mathrm{Gal}(\mF_q:\mF_p)\xrightarrow{\theta\mapsto \iota\circ \theta} \Hom(\mF_q^+,k^+)\cong H^1(\mF_q^+,k).$$}
\end{remark}

\begin{question}\label{QExt}
The formulation of Theorem~\ref{ThmGen} suggests the question of whether the monomorphism (by Theorem~\ref{ThmGen})
$$\Ext^i_{\bG}(M,N)\;\to\; \Ext^i_{k \bG(k)}(M,N)$$
is also an isomorphism. This is equivalent to the question of whether the epimorphism
\begin{equation}\label{RemPerf}
\Ext^i_{kG(k)}(M,N)\;\to\; \Ext^i_{k G(\mF)}(M,N)
\end{equation}
is an isomorphism for all $M,N\in\Rep G$ and $i\in\mN$. Note that \eqref{RemPerf} does not involve any perfection.
\end{question}

\begin{example}\label{ExGm}
The question in \ref{QExt} has an affirmative answer for $G=\mG_m$. Consider the short exact sequence
$$1\to \mG_m(\mF)\to \mG_m(k) \to Q\to 1.$$
By the Lyndon-Hochschild-Serre spectral sequence, showing \eqref{RemPerf} is an isomorphism can be quickly reduced to showing the group cohomology $H^i(Q,k)$ is zero for $i>0$. Now the group structure on $Q$ extends (uniquely) to a $\mQ$-vector space (since  $\mG_m(\mF)< \mG_m(k)$ is the group of roots of unity and $k$ is algebraically closed), so we only need to show $H^i(\mQ,k)=0$ for $i>0$. The case $i=1$ is obvious. 
One can compute directly that $H_i(\mQ,-)=0$ for $i>1$ (or via $B\mQ$, see~\cite[(10) on p42]{Su}), hence $H^i(\mQ,-)=0$ for $i>2$. Finally, 
$$H^2(\mQ,k)\;\cong\;\Ext^1_{\mZ}(\mQ,k)$$
must be an abelian group admitting both the structure of a $\mQ$-vector space as well as a $k$-vector space, hence it is zero.
\end{example}



We conclude with an example showing that \eqref{RemPerf} being an isomorphism is also something which should not be expected to hold outside of reductive groups.
\begin{example}
If instead of a reductive group, we consider $G=\mG_a$, as well as $i=1$, $M=N=k$ in \eqref{RemPerf}, we obtain the morphism between spaces of group homomorphisms
$$\End(k^+)\to\Hom(\mF^+,k^+),$$
induced by restriction along $\mF\hookrightarrow k$. This is not a bijection as soon as $\mF\not=k$.
\end{example}


\section{Localisation of classifying spaces}\label{SecBG}
Fix a prime $p$.
\subsection{Main result}

Following \cite{Su}, by a {\bf simple space} we mean a connected topological space having the homotopy type of a CW complex and abelian fundamental group which acts trivially on the homotopy and homology of the universal covering space.
Let $F$ be a connected topological group of homotopy type of a CW
complex (below we will consider more specifically complex Lie groups),
then its classifying space $BF$ is a simple space ({note that} $\pi_1(BF)=\pi_0(F)$ is trivial).
Following \cite[Chapter~2]{Su}, to each simple space $X$ we can associate a (simple) space $X_{\frac{1}{p}}$, the localisation of $X$ away from $p$. Note that {\it loc. cit.} $X_{\frac{1}{p}}$ is denoted by $X_{\ell}$ where $\ell$ is the set of all primes different from $p$.

\begin{theorem}\label{ThmBG}
Let $G, H$ be split (connected) reductive groups over $\mZ$. The following are equivalent:
\begin{enumerate}
\item[(a)] The perfections of $G_k$ and $H_k$ are isomorphic for $k=\overline{\mF}_p$;
\item[(b)] The localisations $BG(\mC)_{\frac{1}{p}}$ and $BH(\mC)_{\frac{1}{p}}$ are homotopy equivalent;
\item[(c)] The root data of $G$ and $H$ become isomorphic after extension to $\mZ[1/p]$.
\end{enumerate}
\end{theorem}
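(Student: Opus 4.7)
The proof factors through the chain of equivalences $(a)\Leftrightarrow(c)\Leftrightarrow(b)$. The equivalence $(a)\Leftrightarrow(c)$ is immediate from Theorem~\ref{ThmClass}: applied to $(G_k)_{\perf}$ and $(H_k)_{\perf}$, the canonical bijection between perfectly reductive groups over $k=\overline{\mF}_p$ and $\mZ[1/p]$-root data identifies an isomorphism in (a) with one in (c).

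For $(c)\Rightarrow(b)$, I would follow Friedlander's strategy from~\cite{Fr}. By Example~\ref{RemIsog}, an isomorphism of $\mZ[1/p]$-root data lifts, after multiplication by a suitable power of $p$, to an isogeny of $\mZ$-root data in Steinberg's sense, which by \cite[Theorem~1.5]{St} arises from an isogeny $f:G_k\to H_k^{(j)}$ of reductive groups over $k$ whose kernel is an infinitesimal group scheme. Friedlander showed that such purely inseparable isogenies induce weak equivalences on the $\ell$-completed \'etale homotopy types of the classifying stacks $BG_k$ and $BH_k$ for each prime $\ell\ne p$; via the Artin--Mazur comparison identifying these with $BG(\mC)^\wedge_\ell$ and $BH(\mC)^\wedge_\ell$, together with the trivial rational equivalence (the isogeny induces a $W$-equivariant $\mQ$-iso\-morph\-ism of character lattices), Sullivan's arithmetic fracture square assembles these local equivalences into the desired $BG(\mC)_{\frac{1}{p}}\simeq BH(\mC)_{\frac{1}{p}}$.

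For $(b)\Rightarrow(c)$, the plan is to reconstruct the $\mZ[1/p]$-root datum from $BG(\mC)_{\frac{1}{p}}$. Passing to maximal compact subgroups $K\subset G(\mC)$ and $K'\subset H(\mC)$ (which carry the same root data as $G$ and $H$ respectively), the hypothesis becomes $BK_{\frac{1}{p}}\simeq BK'_{\frac{1}{p}}$. For each prime $\ell\ne p$, the $\ell$-completions $BK^\wedge_\ell$ and $B(K')^\wedge_\ell$ are equivalent connected $\ell$-compact groups; by the classification of such groups (Dwyer--Wilkerson, supplemented by Andersen--Grodal--M{\o}ller--Viruel), each carries a root datum over $\mZ_\ell$ equal to the base change of the integral root datum of the underlying compact Lie group. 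Assembling this $\mZ_\ell$-data across all $\ell\ne p$ together with the rational structure, recovered from $H^\ast(BK;\mQ)\cong\mathrm{Sym}(X(T)\otimes\mQ)^W$, via the arithmetic fracture square for $\mZ[1/p]$ reconstructs the $\mZ[1/p]$-root datum up to isomorphism.

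The main obstacle is this last reconstruction step, which rests on deep input from homotopy theory; a more elementary route would reduce $(b)$ to $(a)$ directly by exploiting that the \'etale homotopy type of $BG_k$ is insensitive to perfection (so that the equivalences at primes $\ell\ne p$ already separate the classifying spaces up to characteristic-$p$ infinitesimal phenomena), and then invoking a rigidity statement for classifying spaces of perfectly reductive groups. Either strategy bypasses the apparent asymmetry between compact Lie groups and perfectly reductive groups by using $\mZ[1/p]$-root data as a common classification object.
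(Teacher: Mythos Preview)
Your chain $(a)\Leftrightarrow(c)$ and $(c)\Rightarrow(b)$ matches the paper's argument; the paper simply cites \cite[Theorem~1.6]{Fr} directly rather than unpacking Friedlander's proof into \'etale homotopy types and a fracture assembly, but the content is the same.

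The real divergence is in $(b)\Rightarrow(c)$. You propose to recover the $\mZ[1/p]$-root datum by completing at every prime $\ell\ne p$, invoking the full $\ell$-compact classification at each, and then gluing via an arithmetic fracture square. The paper's route is both lighter and more robust: it first applies the Adams--Mahmud theorem to the single homotopy equivalence $f:(BK)_{\frac{1}{p}}\to(BK')_{\frac{1}{p}}$ to obtain one $\mZ[1/p]$-linear map $\theta$ on $H^2$ of the maximal tori, which already gives an isomorphism of $\mZ[1/p]$-reflection groups. Lemma~\ref{Lemp2} then says this is automatically a root datum isomorphism when $p=2$, and for $p>2$ it suffices to check the single prime $2$; only there does the paper invoke $\ell$-compact groups (Andersen--Grodal for $\ell=2$), using Adams--Mahmud uniqueness to identify the $2$-adic comparison map with the base change of the global $\theta$.

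Your approach has a gap at the assembly step: knowing that the $\mZ_\ell$-root data are abstractly isomorphic for each $\ell\ne p$, and that the rational root data agree, does not by itself produce a $\mZ[1/p]$-isomorphism. You would need the local isomorphisms to be compatible over the $\mQ_\ell$, i.e.\ to come from a single rational isomorphism after base change, and nothing in the $\ell$-compact classification pins them down to that extent (each is only determined up to the automorphism group of the root datum). The paper sidesteps this entirely because Adams--Mahmud hands you a global $\theta$ over $\mZ[1/p]$ from the outset; the $2$-adic check is then about properties of this fixed map, not about gluing independent local data.
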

\begin{proof}[Proof of $(c)\Leftrightarrow (a)\Rightarrow (b)$]
The equivalence of (a) and (c) is already established in \ref{Idea}.

Assume that the perfections of $G_k$ and $H_k$ are isomorphic. By Proposition~\ref{PropIso}, after replacing $H_k$ with an (isomorphic) Frobenius twisted version, there is an infinitesimal isogeny $G_k\to H_k$. By Lemma~\ref{LemIsog}, this isogeny is purely inseparable. It then follows from \cite[Theorem~1.6]{Fr} that $BG(\mC)_{\frac{1}{p}}$ and $BH(\mC)_{\frac{1}{p}}$ are homotopy equivalent.
\end{proof}

The rest of this chapter is devoted to the proof of $(b)\Rightarrow (c)$.

\subsection{Some useful facts}

\begin{enumerate}[(a)]
\item For a complex reductive group $F$ and a maximal compact subgroup $K< F$ (the corresponding compact connected Lie group), the homomorphism $K\to F$ is a homotopy equivalence and hence $BK\simeq BF$. We will therefore henceforth replace $BF$ by $BK$.
\item For a simple space $X$, the defining map $X\to X_{\frac{1}{p}}$, see \cite[Chapter~2]{Su}, induces an isomorphism $$H_\ast(X;\mZ)\otimes\mZ[1/p]\xrightarrow{\sim}H_\ast(X_{\frac{1}{p}};\mZ).$$
\item For a commutative ring $D$ in which $p$ is invertible, by (b) and the universal coefficient theorem, we have a natural isomorphism $H^\ast(X_{\frac{1}{p}};D)\cong H^\ast(X;D)$. In particular, a map $X_{\frac{1}{p}}\to Y_{\frac{1}{p}}$ induces a graded algebra morphism $H^\ast(Y;D)\to H^\ast(X;D)$.
\item For a flat morphism $A\to B$ of commutative algebras and a topological space $X$ for which $H_i(X;\mZ)$ are all finitely generated, $H^\ast(X;A)\otimes_AB\to H^\ast(X;B)$ is an isomorphism.
\item For a torus $T\cong (S^1)^{\times r}$, $H^\ast(BT;\mZ)$ is a polynomial ring with $r$ generators in degree $2$. Consequently, for any commutative ring $R$, $H^\ast(BT;R)\cong H^\ast(BT;\mZ)\otimes R$ is a polynomial ring and we have a bijection between $R$-linear morphisms $\theta:H^2(BT';R)\to H^2(BT;R)$ and graded $R$-algebra morphisms $\widetilde{\theta}:H^\ast(BT';R)\to H^\ast(BT;R)$.
\item For a compact connected Lie group $G$, with maximal torus $T$, and a commutative ring $R$, consider canonical isomorphisms
$$R\otimes X(T)\cong R\otimes H^1(T;\mZ)\cong R\otimes H^2(BT;\mZ)\cong H^2(BT;R).$$
The Weyl group $W$ thus acts $R$-linearly on $H^2(BT,R)$. Moreover the image of 
$$H^\ast(BG;R)\to H^\ast(BT;R)$$
takes values in the algebra of $W$-invariants, see \cite[\S 27]{Borel}.
\item 
For a prime $q$ and a connected CW complex $Y$, we denote by $Y_{\hat{q}}$ the profinite completion at $q$ of $Y$, see \cite[Chapter~3]{Su}. If  $q\not=p$, then the universality of $X\to X_{\frac{1}{p}}$ in the definition in \cite[Chapter~2]{Su} shows that the latter map induces a homotopy equivalence $X_{\hat{q}}\simeq (X_{\frac{1}{p}})_{\hat{q}}$. 
\item For $G$ a compact connected Lie group, $BG$ satisfies the
  requirement in (c), {\it i.e.} the homology groups $H_i(BG;\mZ)$ are
  finitely generated. One can observe this for instance via induction
  on $i$ using the Serre fibration $G\to EG\to BG$. Note that $EG$ is
  contractible, $BG$ is simply connected and $G$ is a finite cell complex. The Leray-Serre spectral sequence thus implies that the trivial group can be obtained, starting from $H_i(BG;\mZ)$ by a finite iteration of taking kernels of morphisms to finitely generated groups (subquotients of $H_a(BG;H_b(G))$, with $a<i$). Consequently, $H_i(BG;\mZ)$ must also be finitely generated.
\end{enumerate}

\subsection{Some results of Adams and Mahmud}
We reformulate some results of Adams and Mahmud in the form we will need.

\begin{theorem}[Adams - Mahmud]
\label{ThmAM}
Let $G$ and $G'$ be two compact connected Lie groups, with maximal tori $T,T'$ and Weyl groups $W,W'$.

\begin{enumerate}
\item For a map $f: (BG)_{\frac{1}{p}}\to (BG')_{\frac{1}{p}}$, there exists a $\mZ[1/p]$-linear morphism 
$$\theta: H^2(BT';\mZ[1/p])\to H^2(BT;\mZ[1/p])$$ yielding a commutative diagram of graded algebra homomorphisms
$$\xymatrix{H^\ast(BG;\mZ[1/p])\ar[d]&& H^\ast(BG';\mZ[1/p])\ar[ll]\ar[d]\\
H^\ast(BT;\mZ[1/p])&& H^\ast(BT';\mZ[1/p])\ar[ll]_{\widetilde{\theta}},
}$$
where the upper horizontal arrow is induced from $f$ as in (c) and $\widetilde{\theta}$ is as in (e).

Moreover, for every $w\in W$ there exists $x\in W'$ with $w\theta=\theta x$.

\item Let $D$ be an integral domain of characteristic zero and consider two $D$-linear morphisms 
$$\theta_1,\theta_2:\;H^2(BT';D)\to  H^2(BT;D)$$ for which the following composite is equal
$$\xymatrix{&& H^\ast(BG';D)\ar[d]\\
H^\ast(BT;D)&& H^\ast(BT';D),\ar@<-.5ex>[ll]_{\widetilde{\theta}_1}\ar@<.5ex>[ll]^{\widetilde{\theta}_2}
}$$
then there exists $x\in W'$ such that $\theta_2=\theta_1x$.
\end{enumerate}
\end{theorem}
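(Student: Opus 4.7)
\emph{Plan.} I will handle the two parts separately, addressing Part (2) first, since its uniqueness statement feeds into Part (1).

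For Part (2), my strategy is to pass to the field of fractions $K = \mathrm{Frac}(D)$. Since $K$ has characteristic zero, $|W'|$ is invertible in $K$, and by Borel's theorem (or direct collapse of the Leray-Serre spectral sequence for $G'/T' \to BT' \to BG'$) one has $H^*(BG'; K) \cong H^*(BT'; K)^{W'}$. The hypothesis then becomes: two $K$-algebra morphisms $\widetilde{\theta}_1, \widetilde{\theta}_2 : H^*(BT'; K) \to H^*(BT; K)$ agreeing on the $W'$-invariant subring. By Chevalley-Shephard-Todd, $\Spec H^*(BT'; K) \to \Spec H^*(BT'; K)^{W'}$ is a $W'$-Galois cover of affine $K$-schemes, so the two $\widetilde{\theta}_i$ are two lifts of the same morphism to the base, and must differ by the right action of a unique $x \in W'$. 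This gives $\widetilde{\theta}_2 = \widetilde{\theta}_1 \circ x$ over $K$; the equality descends to $D$ since both $\theta_i$ are $D$-linear and $D \hookrightarrow K$.

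For Part (1), set $R = \mZ[1/p]$. The map $f$ induces $f^*: H^*(BG'; R) \to H^*(BG; R)$; postcomposing with the restriction to $BT$ gives a morphism $\alpha: H^*(BG'; R) \to H^*(BT; R)^W$ by fact (f). By fact (e), producing the desired $\theta$ is equivalent to producing a graded $R$-algebra morphism $\widetilde{\theta}: H^*(BT'; R) \to H^*(BT; R)$ whose precomposition with the restriction $H^*(BG'; R) \to H^*(BT'; R)$ equals $\alpha$. Following the classical Adams-Mahmud strategy, I would construct $\widetilde{\theta}$ as $\bar{f}^*$ for a homotopy-theoretic lift $\bar{f}: (BT)_{1/p} \to (BT')_{1/p}$ of the composite $(BT)_{1/p} \to (BG)_{1/p} \xrightarrow{f} (BG')_{1/p}$ through the fibration $(BT')_{1/p} \to (BG')_{1/p}$ with fiber $(G'/T')_{1/p}$. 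Since $(BT)_{1/p}$ is the $\mZ[1/p]$-local Eilenberg-MacLane space $K(\mZ[1/p]^r, 2)$, the existence of $\bar f$ reduces to an obstruction analysis for sections of the pulled-back fibration, where obstructions lie in $H^*((BT)_{1/p}; \pi_*((G'/T')_{1/p}))$; the even-degree cohomology of $BT$ combined with the structure of the $\mZ[1/p]$-localized homotopy of the flag manifold makes the obstructions manageable, and any residual ambiguity in the lift is absorbed into the $W'$-action on $(BT')_{1/p}$ (controlled by Part (2)).

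For the Weyl equivariance, note that for each $w \in W$, the automorphism $Bw: BT \to BT$ becomes homotopic to the identity after composition with $BT \to BG$, since $w$ is realized by conjugation by an element of $G$ and inner automorphisms act trivially on classifying spaces up to homotopy. Consequently both $\bar f$ and $\bar f \circ Bw$ are lifts of the same composite into $(BG')_{1/p}$, and by Part (2) they differ by the action of a unique $x \in W'$ on $(BT')_{1/p}$; translating to cohomology yields $w \circ \theta = \theta \circ x$. The main obstacle is the obstruction-theoretic construction of $\bar f$ in Part (1): since we only have the $p$-inverted map $f$ rather than a genuine topological map $BG \to BG'$, the classical argument of Adams-Mahmud must be carried out in the $\mZ[1/p]$-local category, requiring a careful analysis of the $\mZ[1/p]$-localized homotopy groups $\pi_*((G'/T')_{1/p})$ and of how they pair against $H^*((BT)_{1/p}; R)$ to ensure that the obstructions either vanish or can be killed by the $W'$-indeterminacy.
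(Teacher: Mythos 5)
Your proposal takes a substantially different route from the paper. The paper's proof is essentially a citation argument: it invokes \cite[Theorem~1.5(a)]{AM} to get the rational map $\theta$, \cite[Theorem~1.5(b)]{AM} and the discussion after \cite[Lemma~1.2]{AM} to promote it to a $\mZ[1/p]$-linear map, cites \cite[Theorem~1.7]{AM} for the uniqueness in Part~(2) (observing the argument works over any characteristic-zero field, then extends to domains by passing to the fraction field), and finally derives the Weyl equivariance in Part~(1) from Part~(2) by taking $\theta_1 = \theta$ and $\theta_2 = w\theta$, which agree after composition with $H^\ast(BG';-) \to H^\ast(BT';-)$ because of fact~(f). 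You instead attempt to reconstruct the Adams--Mahmud theorems from first principles, and this is where the gaps appear.

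In Part~(2), the claim that $\Spec H^\ast(BT';K) \to \Spec H^\ast(BT';K)^{W'}$ is a ``$W'$-Galois cover'' is not correct as stated: the quotient map $\mA^r \to \mA^r/W'$ is ramified along the reflection hyperplanes, so it is not a cover, and the inference that two lifts over the base must differ by the deck group does not follow directly. The actual Adams--Mahmud argument descends to the fraction field of the image: one uses that $\mathrm{Frac}(H^\ast(BT';K))/\mathrm{Frac}(H^\ast(BT';K)^{W'})$ is Galois with group $W'$ and that $W'$ acts transitively on the primes of $H^\ast(BT';K)$ over a given prime of the invariant subring; this is a going-down/decomposition-group argument rather than a covering-space one. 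Your conclusion is correct but the stated justification does not establish it.

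In Part~(1), the obstruction-theoretic construction of a topological lift $\bar f\colon (BT)_{1/p}\to(BT')_{1/p}$ is the genuine content of the theorem, and your sketch leaves it entirely open: you assert that the obstructions in $H^\ast((BT)_{1/p};\pi_\ast((G'/T')_{1/p}))$ are ``manageable'' without identifying them or explaining why they vanish or are killed by the $W'$-indeterminacy. The homotopy groups of a flag manifold are not concentrated in a form that makes this automatic, and the Adams--Mahmud proof does not proceed by lifting through the fibration over the localised spaces; it works algebraically with the map on cohomology and then verifies integrality prime by prime, which is what \cite[Theorem~1.5(b)]{AM} encodes. You also do not address why the resulting $\theta$ is defined over $\mZ[1/p]$ rather than merely over $\mQ$, which the paper obtains from \cite[Theorem~1.5(b)]{AM} and which is essential. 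As it stands, the hard part of Part~(1) is postulated rather than proved, so the proposal has a genuine gap there.
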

\begin{proof}
For the first statement of part (1), consider the morphism $H^\ast(BG';\mQ)\to H^\ast(BG;\mQ)$ obtained from $f$ via (c), or equivalently via (d) from the map displayed in part (1). Then \cite[Theorem~1.5(a)]{AM} implies existence of a morphism $H^2(BT';\mQ)\to H^2(BT;\mQ)$ yielding the commutative diagram in part (1) with $\mZ[1/p]$ replaced by $\mQ$. That the latter is induced from a morphism $H^2(BT';\mZ[1/p])\to H^2(BT;\mZ[1/p])$ follows from  \cite[Theorem~1.5(b)]{AM} and the discussion after \cite[Lemma~1.2]{AM}. That the diagram over $\mZ[1/p]$ is commutative follows from faithful flatness of $\mZ[1/p]\to\mQ$.

The case $D=\mQ$ of part (2) is a reformulation of \cite[Theorem~1.7]{AM}. The proof {\it loc. cit.} works for any field of characteristic zero.
The case of integral domains follows from extension of scalars to the field of fractions, using (d).

The second statement of part (1) now follows from part (2) and fact (f), by using $\theta_1=\theta$ and $\theta_2=w\theta$. 
\end{proof}

\begin{corollary}\label{CorAM}
With notation as in Theorem~\ref{ThmAM}, assume that $f$ is a homotopy equivalence. Then the morphism
$$t:\mZ[1/p]\otimes X(T')\;\to\;\mZ[1/p]\otimes X(T)$$
obtained from $\theta$ via the isomorphisms in (f), induces an isomorphism of $\mZ[1/p]$-reflection groups $(W',\mZ[1/p]\otimes X(T'))\to (W,\mZ[1/p]\otimes X(T))$.
\end{corollary}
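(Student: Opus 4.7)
The plan is to apply Theorem~\ref{ThmAM} to both $f$ and a homotopy inverse $g: (BG')_{\frac{1}{p}} \to (BG)_{\frac{1}{p}}$, using part (1) to obtain $\mZ[1/p]$-linear maps $t$ and $t'$ going in opposite directions, and then using part (2) to promote these to mutually inverse isomorphisms up to Weyl group elements. All identifications $H^2(B(-);\mZ[1/p]) \cong \mZ[1/p]\otimes X(-)$ used below come from fact (f), which is Weyl-equivariant.

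First, I would fix homotopy equivalences $f$ and $g$ with $g \circ f \simeq \mathrm{id}$ and $f \circ g \simeq \mathrm{id}$, and apply Theorem~\ref{ThmAM}(1) to each, getting $t: \mZ[1/p]\otimes X(T') \to \mZ[1/p]\otimes X(T)$ and $t': \mZ[1/p]\otimes X(T) \to \mZ[1/p]\otimes X(T')$. To show $t$ is a $\mZ[1/p]$-linear isomorphism, I would observe that the two maps $t \circ t'$ and $\mathrm{id}$ both slot into the commutative square of Theorem~\ref{ThmAM}(1) for the self-map $g \circ f$ of $(BG)_{\frac{1}{p}}$: they induce the same composite $H^\ast(BG;\mZ[1/p]) \to H^\ast(BT;\mZ[1/p])$ because $(g \circ f)^\ast = \mathrm{id}$. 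Theorem~\ref{ThmAM}(2) with $D = \mZ[1/p]$ therefore supplies $w \in W$ with $t \circ t' = w$, and the symmetric argument applied to $f \circ g$ supplies $w' \in W'$ with $t' \circ t = w'$. Since $w$ and $w'$ are automorphisms, $t$ is a $\mZ[1/p]$-linear isomorphism with inverse $w^{-1} t' = t'(w')^{-1}$.

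It remains to verify $W = t W' t^{-1}$. The second assertion of Theorem~\ref{ThmAM}(1) applied to $f$ says that for each $w \in W$ there exists $x \in W'$ with $w t = t x$, i.e.\ $w = t x t^{-1}$, so $W \subset t W' t^{-1}$. Applying the same assertion to $g$, for each $w' \in W'$ there is $y \in W$ with $w' t' = t' y$, hence $W' \subset t' W (t')^{-1}$. Substituting $t' = t^{-1} w$ from the previous step and using that $W$ is stable under conjugation by its own elements, we get $t' W (t')^{-1} = t^{-1} W t$, so $W' \subset t^{-1} W t$, equivalently $t W' t^{-1} \subset W$. The two inclusions combine to give $W = t W' t^{-1}$, establishing that $t$ is an isomorphism of $\mZ[1/p]$-reflection groups. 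I do not anticipate a serious obstacle; the corollary is a direct consequence of Theorem~\ref{ThmAM} and the work is essentially careful bookkeeping of directions between $f$ and its induced maps on cohomology and on character lattices.
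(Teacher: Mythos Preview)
Your argument is correct and is precisely the natural elaboration of what the paper leaves implicit: the paper states Corollary~\ref{CorAM} without proof, treating it as an immediate consequence of Theorem~\ref{ThmAM}. Your bookkeeping with $f$, its homotopy inverse $g$, and the uniqueness clause Theorem~\ref{ThmAM}(2) is exactly how one unpacks that implication.
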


\subsection{Conclusion of the proof of Theorem~\ref{ThmBG}}

\begin{proof}[Proof of $(b)\Rightarrow (c)$]
Assume first that $p=2$. Then the result follows from Corollary~\ref{CorAM} and Lemma~\ref{Lemp2}(1). Similarly, for $p>2$, by Lemma~\ref{Lemp2}(2) it is sufficient to prove that the extension of scalars along $\mZ[1/p]\to\mZ_2$ of $t$ yields an isomorphism of $\mZ_2$-root data. This allows us to resort to the established theory of $2$-compact groups, see \cite{AG}.

Starting from a homotopy equivalence $f$ as in Theorem~\ref{ThmAM}, we have our $\theta$ from \ref{ThmAM}(1) which induces the isomorphism of reflection groups in Corollary~\ref{CorAM}, and using (g) we have a homotopy equivalence $f_{\widehat{2}}$. We consider the diagram
$$\xymatrix{(BG)_{\widehat{2}}\ar[rr]^{f_{\widehat{2}}}&& (BG')_{\widehat{2}}\\
(BT)_{\widehat{2}}\ar@{-->}[rr]\ar[u]&& (BT')_{\widehat{2}}.\ar[u]
}$$
Now $(BT)_{\widehat{2}}\to (BG)_{\widehat{2}}$ is a maximal torus of the 2-compact group $(BG)_{\widehat{2}}$, in the sense of \cite[Theorem~2.2]{Gr}. By uniqueness of such maximal tori, see {\it loc. cit.}, there exists a homotopy equivalence corresponding to the dashed arrow in the above diagram so that the diagram is commutative up to homotopy.

By \cite[Theorem~3.9]{Su}, this induces 
$$\phi: H^2(BT';\mZ_2)\to H^2(BT;\mZ_2)$$ yielding a commutative diagram
$$\xymatrix{H^\ast(BG;\mZ_2)\ar[d]&& H^\ast(BG';\mZ_2)\ar[ll]\ar[d]\\
H^\ast(BT;\mZ_2)&& H^\ast(BT';\mZ_2)\ar[ll]_{\widetilde{\phi}}.
}$$
By uniqueness in Theorem~\ref{ThmAM}(2) applied to $D=\mZ_2$, we may assume that $\phi$ is actually induced from $\theta$ by extension of scalars $\mZ[1/p]\to\mZ_2$. Finally, the $\mZ_2$-root data of the 2-compact group $(BG)_{\widehat{2}}$, as defined in \cite{AG}, is obtained from the map $BT_{\widehat{2}}\to BG_{\widehat{2}}$ and by construction yields the extension of scalars along $\mZ_2$ of the classical root datum of $G$. The homotopy equivalence $(BG)_{\widehat{2}}\simeq (BG')_{\widehat{2}}$ with commutative diagram therefore indeed implies that our isomorphism of $\mZ[1/p]$-reflection groups extends to an isomorphism of $\mZ_2$-root data.
\end{proof}


\section{Perfected $SL_2$}\label{SecSL2}
Let $k$ be an algebraically closed field of characteristic $p$. For $\lambda$ in $\mN$ or $\mN[1/p]:=\mZ[1/p]\cap\mR_{\ge 0}$, we consider its $p$-adic expansion $\lambda=\sum_i\lambda_ip^i$ with $0\le \lambda_i<p$.

\subsection{Fractal}

\subsubsection{}
For $i,j\in\mN$, denote by $\binom{i}{j}_0$, the zero coefficient of the $p$-adic expansion of the binomial coefficient. By convention, $\binom{i}{j}_0=0$ if $j>i$. For $i,j\in\mN[1/p]$, we set
$$\binom{i}{j}_0\;:=\; \binom{p^li}{p^lj}_0\;\in\;\{0,1,\cdots,p-1\},$$
for some $l\in\mN$ for which $p^li,p^lj\in\mN$. By Lucas' theorem, this definition does not depend on the choice of $l$.

\subsubsection{}
We can describe the weight spaces in simple $(SL_2)_\perf$-modules by
$$\dim_k\bL(n)_{n-2j}\;=\;\begin{cases}
1&\mbox{if}\quad \binom{n}{j}_0\not=0\\
0&\mbox{if}\quad \binom{n}{j}_0=0,
\end{cases}$$
for $n,j\in\mN[1/p]$. In particular, the set 
$$F:=\{(n,i)\,|\,\dim_k\bL(n)_i\not=0\}\subset\mZ[1/p]^{2}\subset\mR^2$$
is a fractal. Concretely, $(a,b)\in F$ if and only if $(p^la,p^lb)\in F$ for all $l\in\mZ$.

The integer points of the fractal $F$ for $p=3$ are displayed in Figure~1.

\subsection{First extensions}

In this section and the next we will apply the short exact sequence
\begin{equation}\label{sesPa}\nabla(\kappa)^{(1)}\hookrightarrow \nabla(p\kappa)\tto \nabla(\kappa-1)^{(1)}\otimes L(p-2),\quad\kappa\in\mN=X_+,
\end{equation}
see \cite[equation~(3)]{Pa}, as well as the isomorphism
\begin{equation}\label{isoPa}\nabla(p\kappa -1)\cong \nabla(\kappa-1)^{(1)}\otimes L(p-1),\quad\kappa\in\mZ_{>0}.\end{equation}

\begin{prop}\label{PropExt1}
For $\blam,\bmu\in \bX_+=\mN[1/p]$, we have
$$\dim\Ext^1(\bL(\blam),\bnab(\bmu))\;=\;\begin{cases}1&\mbox{if there is $i\in\mZ$ with } \lambda_i+\mu_i=p-2 \mbox{ and }\\
&\blam-p^i\lambda_i=\bmu-p^i\mu_i+p^{i+1},\\
0&\mbox{otherwise,}
\end{cases}$$
and
$$\dim\Ext^1(\bL(\blam),\bL(\bmu))\;=\;\begin{cases}1&\mbox{if there is $i\in\mZ$ with } \lambda_i+\mu_i=p-2,\\
& |\lambda_{i+1}-\mu_{i+1}|=1 \mbox{ and }\lambda_j=\mu_j\mbox{ for $j\not\in\{i,i+1\}$,}\\
0&\mbox{otherwise.}
\end{cases}$$
\end{prop}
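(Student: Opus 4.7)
The plan is to reduce both computations to classical $\Ext^1$-groups for $SL_2$ via a direct-limit argument based on Remark~\ref{ExtLim} and Corollary~\ref{PerfectInduction}, and then to invoke the corresponding classical $SL_2$ formulas, which can be proved by induction on the highest weight using \eqref{sesPa} and \eqref{isoPa}.

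First, fix $N\in\mN$ with $\lambda:=p^N\blam,\ \mu:=p^N\bmu\in\mN$. By Theorem~\ref{ThmSimples} we have $\bL(\blam)=L(\lambda)[N]$, and Corollary~\ref{PerfectInduction} applied to the one-dimensional $B$-module $k_\mu$ gives $\bnab(\bmu)\cong\varinjlim_{i\ge N}\nabla(p^i\bmu)[i]$. Combining these with Remark~\ref{ExtLim} and the compactness of $\bL(\blam)\in\Rep^\infty\bG$ yields
\begin{align*}
\Ext^1_{\bG}(\bL(\blam),\bnab(\bmu))&\cong\varinjlim_{j\to\infty}\Ext^1_{SL_2}(L(p^j\blam),\nabla(p^j\bmu)),\\
\Ext^1_{\bG}(\bL(\blam),\bL(\bmu))&\cong\varinjlim_{j\to\infty}\Ext^1_{SL_2}(L(p^j\blam),L(p^j\bmu)),
\end{align*}
where the transition maps are induced by Frobenius twist and are injective by Remark~\ref{ExtInj}.

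Second, I would establish the classical analogues of the two formulas: for $\lambda,\mu\in\mN$, both $\dim\Ext^1_{SL_2}(L(\lambda),\nabla(\mu))$ and $\dim\Ext^1_{SL_2}(L(\lambda),L(\mu))$ lie in $\{0,1\}$ and satisfy the digital criteria of the proposition restricted to $i\ge 0$. The $\nabla$-case is proved by induction on $\mu$: if $\mu<p$ then $\nabla(\mu)=L(\mu)$ and the result follows from semisimplicity of the first restricted block of $SL_2$; otherwise, applying $\Hom_{SL_2}(L(\lambda),-)$ to \eqref{sesPa} (or invoking \eqref{isoPa}), peeling off the zero-th $p$-adic digit of $\lambda$ via Steinberg's tensor product theorem and the tensor-hom adjunction, and using the inductive hypothesis on $\kappa$, reduces the problem to a finite combinatorial check on digits. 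The simple-to-simple case is then deduced from the $\nabla$-case by applying $\Hom_{SL_2}(L(\lambda),-)$ to $0\to L(\mu)\to\nabla(\mu)\to\nabla(\mu)/L(\mu)\to 0$ and symmetrising the resulting digital condition.

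Finally, the perfected formulas follow from a digit-shift argument: writing $\blam=\sum_{l\in\mZ}\blam_l p^l$, we have $(p^j\blam)_i=\blam_{i-j}$ for all $i\ge 0$, so the classical condition at level $j$ corresponds exactly to the perfected condition with index $i-j\in\mZ$; as $j\to\infty$ this ranges over every integer. Injectivity of the transition maps ensures the limit dimension is at most one, and equals one precisely when the perfected digital condition holds. The main obstacle is Step~2: the induction for the $\Ext^1(L,\nabla)$ formula, where the asymmetric condition (with the extra $p^{i+1}$ term) forces careful bookkeeping of carries arising when $\mu_{i+1}=p-1$ in the recursion, and ensuring that the long exact sequence from \eqref{sesPa} delivers the correct single-dimensional contribution rather than cancellations or doublings.
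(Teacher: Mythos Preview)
Your approach is essentially the paper's: reduce to classical $SL_2$ extensions via the direct-limit description, argue that the transition maps are injective so the limit is at most one-dimensional, and then read off the answer from the digital formulas for $\Ext^1_{SL_2}$. The paper differs only in that it cites \cite[Corollary~6.2 and Theorem~4.3]{Pa} directly for the classical recursions rather than re-deriving them from \eqref{sesPa} and \eqref{isoPa}, and it handles the simple-to-simple case via Parker's recursion together with an explicit computation of $\Hom(L(a),L(b)\otimes L(1))$ rather than through the quotient $\nabla(\mu)/L(\mu)$.

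There is one genuine gap in your injectivity step for the $\bnab$ case. The transition map in the directed system is not a pure Frobenius twist but the composite
\[
\Ext^1(L(p^l\blam),\nabla(p^l\bmu))\;\to\;\Ext^1(L(p^{l+1}\blam),\nabla(p^l\bmu)^{(1)})\;\to\;\Ext^1(L(p^{l+1}\blam),\nabla(p^{l+1}\bmu)),
\]
and Remark~\ref{ExtInj} covers only the first arrow. The second arrow is induced by the inclusion $\nabla(p^l\bmu)^{(1)}\hookrightarrow\nabla(p^{l+1}\bmu)$, and its injectivity requires a separate argument. The paper obtains it (for $p>2$) from the cokernel description in \eqref{sesPa}, which shows that the cokernel $\nabla(p^l\bmu-1)^{(1)}\otimes L(p-2)$ contributes no $\Hom$ from $L(p^{l+1}\blam)$; for $p=2$ the second arrow can genuinely fail to be injective, and one must argue separately that the composite is still injective. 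Without this, your final sentence (``injectivity of the transition maps ensures the limit dimension is at most one'') is unjustified in the costandard case.
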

\begin{proof}
Recall from equation~\eqref{LimNabla} that $\bnab(\blam)$ is a direct limit of pullbacks to $G_{\perf}$ of costandard $G$-modules. Since $\Ext^1(\bL(\blam),-)$ commutes with direct limits in the second argument, we can use Remark~\ref{ExtLim} to conclude\
$$\Ext^1(\bL(\blam),\bnab(\bmu))=\varinjlim \Ext^1(L(p^l\blam),\nabla(p^l\bmu)).$$
The transition maps are given by the composite
$$\Ext^1(L(p^l\blam),\nabla(p^l\bmu))\;\to\; \Ext^1(L(p^l\blam)^{(1)},\nabla(p^l\bmu)^{(1)})\to \Ext^1(L(p^{l+1}\blam),\nabla(p^{l+1}\bmu)).$$
Here, the first map is given by the action of the Frobenius twist, so is injective by Remark~\ref{ExtInj}. The second map comes from the inclusion $\nabla(p^l\bmu)^{(1)}\hookrightarrow \nabla(p^{l+1}\bmu)$. From the description of the cokernel of the inclusion in \eqref{sesPa} it follows that the second map is also injective for $p>2$. For $p=2$ the second map need not be injective, but one shows easily that the composite is still injective.

Assume $p>2$, the case $p=2$ can be proved similarly. It follows quickly from \cite[Corollary~6.2]{Pa}, that for $0\le i<p$
$$\Ext^1(L(pa+i),\nabla(pb+i))\cong\Ext^1(L(a),\nabla(b)),$$
while for $0\le i<p-1$
$$\dim\Ext^1(L(pa+p-2-i),\nabla(pb+i))=\delta_{b+1,a}.$$ 
Together with the block decomposition, this allows by iteration to calculate the first set of extensions.

It follows from Remark~\ref{ExtLim} that
$$\Ext^1(\bL(\blam),\bL(\bmu))=\varinjlim \Ext^1(L(p^l\blam),L(p^l\bmu)),$$
where the transition maps are injective by Remark~\ref{ExtInj}.

Assume $p>2$, the case $p=2$ can be proved similarly. By \cite[Theorem~4.3]{Pa}, we have for $0\le i<p$
\begin{equation}\label{p22}\Ext^1(L(pa+i),L(pb+i))\cong\Ext^1(L(a),L(b)),\end{equation}
while for $0\le i<p-1$
$$\Ext^1(L(pa+i),L(pb+p-2-i))\cong\Hom(L(a),L(b)\otimes L(1)).$$ 
On the other hand, we have
$$\dim\Hom(L(a),L(b)\otimes L(1))=\begin{cases}
1&\mbox{if $b_0=0$ and $a=b+1$}\\
1&\mbox{if $0<b_0<p-1$ and $a=b\pm1$}\\
1&\mbox{if $b_0=p-1$ and $a=b-1$}\\
0&\mbox{otherwise.}
\end{cases}
$$
The cases $b_0<p-1$ follow immediately from the Steinberg tensor product theorem. If $b_0=p-1$, we know by parity that the space is zero unless $a_0<p-1$ in which case we can use symmetry between $a$ and $b$ to reduce to the already known cases.
\end{proof}

\begin{remark}
Equation~\eqref{p22} shows that $\Rep SL_2\to\Rep (SL_2)_{\perf}$ yields isomorphisms on first extensions between simple objects for $p>2$. This is not true for $p=2$.
\end{remark}

\subsection{Costandard modules}
We describe the multiplicities of the simple modules in $\bnab(\blam)$. By Remark~\ref{RemDL}(2) it is sufficient to consider $\blam\in\mN$ (with the case $\blam=0$ trivial).
\begin{prop}\label{PropFilt}
For $\lambda\in \mZ_{>0}$ consider the finite sets
$$E^0(\lambda):=\{\nu\in\mN\,|\, [\nabla(\lambda):L(\nu)]\not=0\}\quad\mbox{and}\quad E^\infty(\lambda):=\{\nu\in\mZ_{>0}\,|\, [\nabla(\lambda-1):L(\nu-1)]\not=0\}.$$
Then, for all $\bmu\in\mN[1/p]$, we have
$$[\bnab(\lambda):\bL(\bmu)]\;=\; \begin{cases}
1&\mbox{ if $\bmu\in E^0(\lambda)$}\\
1&\mbox{ if $\bmu=\nu-\frac{2}{p^i}$ with $\nu\in E^\infty(\lambda)$ and $i>0$}\\
0&\mbox{ otherwise.}
\end{cases}$$
More precisely, $\bnab(\lambda)$ has a filtration $0=M_0\subset M_1\subset M_2\subset\cdots$ with $\cup_iM_i=\bnab(\lambda)$ and 
$$M_1=\nabla(\lambda)[0]\quad\mbox{and}\quad M_{i+1}/M_i\cong \left( \nabla(\lambda-1)^{(i)}\otimes L(p^i-2)\right)[i],\quad\mbox{for $i>0$}.$$
\end{prop}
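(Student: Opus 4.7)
The plan is to exhibit the filtration directly from the realisation $\bnab(\lambda) \cong \varinjlim_i \nabla(p^i\lambda)[i]$ of \eqref{LimNabla}. Using $N[i] \cong N^{(1)}[i+1]$, the transition map $\nabla(p^i\lambda)[i]\to \nabla(p^{i+1}\lambda)[i+1]$ is induced by the injection $\nabla(p^i\lambda)^{(1)} \hookrightarrow \nabla(p^{i+1}\lambda)$ of \eqref{sesPa}. Since $\Rep^\infty \bG$ is a Grothendieck abelian category (so filtered colimits are exact), the canonical morphisms from each term to the colimit remain injective. I then set $M_{i+1}$ to be the image of $\nabla(p^i\lambda)[i]$ in $\bnab(\lambda)$, so that $M_1 = \nabla(\lambda)[0]$, $M_i \subset M_{i+1}$, and $\bigcup_i M_i = \bnab(\lambda)$.

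The successive quotients follow from \eqref{sesPa} applied with $\kappa = p^{i-1}\lambda$: as $G$-modules, $\nabla(p^i\lambda)/\nabla(p^{i-1}\lambda)^{(1)} \cong \nabla(p^{i-1}\lambda - 1)^{(1)} \otimes L(p-2)$. Iterating \eqref{isoPa} gives $\nabla(p^{i-1}\lambda - 1) \cong \nabla(\lambda - 1)^{(i-1)} \otimes L(p^{i-1} - 1)$, so the quotient becomes $\nabla(\lambda - 1)^{(i)} \otimes L(p^i - p) \otimes L(p-2)$. Since $p^i - p$ and $p-2$ have disjoint $p$-adic supports (positions $1,\dots,i-1$ and position $0$, respectively) and sum to $p^i - 2$, the Steinberg tensor product theorem gives $L(p^i - p) \otimes L(p-2) = L(p^i - 2)$, establishing the claimed form of $M_{i+1}/M_i$.

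For the composition factors, I would invoke the $SL_2$-specific observation that every weight space of $\nabla(\kappa)$ is at most one-dimensional, forcing $[\nabla(\kappa):L(\mu)] \leq 1$ for all $\mu$. Thus $\nabla(\lambda - 1)$ is multiplicity-free with composition factors $\{L(\nu - 1) : \nu \in E^\infty(\lambda)\}$. Twisting gives the composition factors $L(p^i(\nu - 1))$ of $\nabla(\lambda - 1)^{(i)}$; tensoring with $L(p^i - 2)$ and applying Steinberg once more (the $p$-adic digits of $p^i(\nu - 1)$ vanish below position $i$, while those of $p^i - 2$ occupy positions $0,\dots,i-1$) yields $L(p^i(\nu - 1)) \otimes L(p^i - 2) = L(p^i \nu - 2)$, a simple module. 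Translating via $L(\tau)[i] = \bL(\tau/p^i)$, the composition factors of $M_{i+1}/M_i$ are precisely $\{\bL(\nu - 2/p^i) : \nu \in E^\infty(\lambda)\}$, each of multiplicity one.

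Combined with the contribution $\{\bL(\nu) : \nu \in E^0(\lambda)\}$ from $M_1 = \nabla(\lambda)[0]$, this will give the stated multiplicity formula. The main technical hurdles are the two Steinberg identities above, which reduce to bookkeeping with $p$-adic digits, and verifying that the listed simples are pairwise distinct: for $p > 2$ this is automatic since $\nu - 2/p^i \notin \mN$ for $i \geq 1$, while for $p = 2$ it follows from parity, as $E^0(\lambda)$ and $\{\nu - 1 : \nu \in E^\infty(\lambda)\} = E^0(\lambda - 1)$ consist of weights of opposite residue modulo $2$, and similarly different choices of $i \geq 1$ produce weights whose denominators (in lowest terms) differ.
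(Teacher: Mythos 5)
Your proof is correct and follows the same route as the paper: realise $\bnab(\lambda)$ as $\varinjlim \nabla(p^i\lambda)[i]$, take the filtration by images, identify the subquotients via \eqref{sesPa}, then reduce to standard form by iterating \eqref{isoPa} and Steinberg. The only addition you make beyond the paper's terse argument is the explicit check that the composition factors listed in the multiplicity formula are pairwise distinct across the different graded pieces (the $p=2$ parity argument and the denominator argument for $p>2$); this is implicitly needed for the stated multiplicities to be exactly $1$ rather than possibly higher, and is a worthwhile detail to spell out.
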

\begin{proof}
Recall from the proof of Theorem~\ref{ThmSimples} that
$\bnab(\lambda)=\varinjlim \nabla(p^i\lambda)[i]$ where every morphism
in the chain is injective. The corresponding filtration is the desired
one. Indeed the subquotients are given by the cokernel in
\eqref{sesPa} for $\kappa=p^i\lambda$ on which we can apply
iteratively \eqref{isoPa} and the Steinberg tensor product theorem.
\end{proof}

\begin{example}Combining Propositions~\ref{PropExt1} and~\ref{PropFilt} shows:
\begin{enumerate}
\item Consider $0<\lambda<p$, then the socle filtration of $\bnab(\lambda)$ is given by $\soc \bnab(\lambda)=\bL(\lambda)$ and
$$\soc^i\bnab(\lambda)=\bL(\lambda-\frac{2}{p^i}),\quad i>0.$$
\item  The socle filtration of $\bnab(2p-1)$ is given by $\soc\bnab(2p-1)=\bL(2p-1)$, $\soc^1\bnab(2p-1)=\bL(2p-1-2/p)$ and
$$\soc^{i+1}\bnab(2p-1)\cong \bL(2p-1-\frac{2}{p^{i+1}})\oplus \bL(1-\frac{2}{p^{i}}), \quad i>0.$$
\end{enumerate}
\end{example}

\begin{remark}\label{RemSymP}
We can explicitly realise $\bnab(\blam)$ as the space of `degree $\blam$' elements in $k[x^{1/p^\infty},y^{1/p^\infty}]$, that is the span of $\{x^{\bmu}y^{\bnu}\,|\,\bmu,\bnu\in \mN[1/p], \;  \bmu+\bnu=\blam\}$.
\end{remark}

\subsection{Line bundle cohomology}

We consider the representations $\bW(\blam)$, $\blam\in\mN[1/p]\backslash \{0\}=\bX_{++}$, from \ref{WeylType}.

\begin{prop}\label{PropFilt2} Recall the finite sets $E^0,E^\infty$ from Proposition~\ref{PropFilt}.
For $\lambda\in \mZ_{>0}$ and $\bmu\in\mN[1/p]$, we have
$$[\bW(\lambda):\bL(\bmu)]\;=\; \begin{cases}
1&\mbox{ if $\bmu\in E^0(\lambda-2)$, (with $E^0(-1):=\varnothing$)}\\
1&\mbox{ if $\bmu=\nu-\frac{2}{p^i}$ with $\nu\in E^\infty(\lambda)$ and $i>0$}\\
0&\mbox{ otherwise.}
\end{cases}$$
More precisely, $\bW(\lambda)$ has a filtration $0=M_0\subset M_1\subset M_2\subset\cdots$ with $\cup_iM_i=\bW(\lambda)$ and (with convention $\Delta(-1)=0$)
$$M_1=\Delta(\lambda-2)[0]\quad\mbox{and}\quad M_{i+1}/M_i\cong \left( \Delta(\lambda-1)^{(i)}\otimes L(p^i-2)\right)[i],\quad\mbox{for $i>0$}.$$
\end{prop}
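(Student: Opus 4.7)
The plan is to mirror the proof of Proposition~\ref{PropFilt} using Weyl modules in place of dual Weyl modules. Starting from the identification $\bW(\lambda) \cong \varinjlim_i \Delta(p^i\lambda - 2)[i]$ given in \ref{WeylType}, define $M_i$ as the image of $\Delta(p^{i-1}\lambda - 2)[i-1]$ in $\bW(\lambda)$. The first key step is to produce the analogue of \eqref{sesPa} for Weyl modules with weight $p\kappa - 2$: for $\kappa \geq 1$,
$$0 \to \Delta(\kappa - 2)^{(1)} \to \Delta(p\kappa - 2) \to \Delta(\kappa - 1)^{(1)} \otimes L(p-2) \to 0.$$
This is obtained from the variant of \eqref{sesPa} in which $p\kappa$ is replaced by $p\kappa + (p-2)$ (appearing in \cite{Pa}), followed by reindexing and dualising. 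Setting $\kappa = p^{i-1}\lambda$ identifies the transition map of the direct system with the injection in this short exact sequence, up to scalar, by uniqueness of the non-zero $G$-map between the two Weyl modules or by tracing the adjunction-theoretic construction of the limit. This yields $M_1 = \Delta(\lambda - 2)[0]$ and for $i \geq 1$
$$M_{i+1}/M_i \cong \left(\Delta(p^{i-1}\lambda - 1)^{(1)} \otimes L(p-2)\right)[i].$$

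A Steinberg tensor product simplification now rewrites this in the stated form. Iterating the dual of \eqref{isoPa} gives $\Delta(p^{i-1}\lambda - 1) \cong \Delta(\lambda - 1)^{(i-1)} \otimes L(p^{i-1} - 1)$. After one further Frobenius twist, the $p$-adic supports of $p^i - p$ (positions $1,\ldots,i-1$) and of $p-2$ (position $0$) are disjoint, so Steinberg gives $L(p^i - p) \otimes L(p-2) \cong L(p^i - 2)$, hence
$$M_{i+1}/M_i \cong \left(\Delta(\lambda - 1)^{(i)} \otimes L(p^i - 2)\right)[i].$$

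The multiplicity statement then follows as in Proposition~\ref{PropFilt}: $M_1$ contributes $\bL(\nu)$ for $\nu \in E^0(\lambda - 2)$. For $i \geq 1$, an analogous Steinberg argument (the $p$-adic supports of $p^i(\nu - 1)$ and $p^i - 2$ are disjoint) identifies $L(p^i(\nu - 1)) \otimes L(p^i - 2) \cong L(p^i\nu - 2)$, so the composition factors of $M_{i+1}/M_i$ are $\bL(\nu - 2/p^i)$ for $\nu \in E^\infty(\lambda)$, using the identification $L(m)[i] = \bL(m/p^i)$ from the proof of Theorem~\ref{ThmSimples}. The main subtlety lies in the first step: establishing the short exact sequence for $\Delta(p\kappa - 2)$ and matching the transitions in the direct limit with its injection; everything downstream is elementary $p$-adic combinatorics paralleling Proposition~\ref{PropFilt}.
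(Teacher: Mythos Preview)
Your approach is essentially the paper's: realise $\bW(\lambda)$ as $\varinjlim_i \Delta(p^i\lambda-2)[i]$, identify the successive cokernels via the $i=p-2$ case of \cite[(3)]{Pa} (which after dualising and reindexing is exactly your short exact sequence for $\Delta(p\kappa-2)$), and then simplify with \eqref{isoPa} and Steinberg. The downstream combinatorics is correct.

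The one genuine gap is the step you yourself flag as the main subtlety: you have not shown that the transition maps $\Delta(p^{i-1}\lambda-2)^{(1)}\to\Delta(p^i\lambda-2)$ are injective, or even non-zero. Your ``uniqueness of the non-zero $G$-map'' only tells you that the Hom space is at most one-dimensional, so a non-zero map would automatically agree (up to scalar) with the inclusion from your short exact sequence; it does not rule out the possibility that the transition map is zero. And ``tracing the adjunction-theoretic construction of the limit'' is not an argument: for $n=0$ one has Remark~\ref{RemNot}(2), but there is no analogous statement for $n=1$, and the construction of the derived transition maps in Proposition~\ref{IndProp}(2) goes through injective resolutions, from which non-vanishing is not visible. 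The paper closes this gap by computing the transition maps in \v{C}ech cohomology (cf.\ the proof of Lemma~\ref{LemLine}): for $SL_2$ one has $G/B=\mathbb{P}^1$, the standard two-affine cover gives $H^1(\mathbb{P}^1,\mathcal{L}(-m))$ as a span of Laurent monomials $x^ay^b$ with $a,b<0$, $a+b=-m$, and the transition map is $f\mapsto f^p$, which is visibly injective. Once you insert this, your argument is complete and coincides with the paper's.
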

\begin{proof}
Using \v{C}ech cohomology (see proof of Lemma~\ref{LemLine}), it follows easily that the morphisms in the directed system in \ref{WeylType} are injective. The result then follows as in the proof of Proposition~\ref{PropFilt}, by now using \cite[(3)]{Pa} for $i=p-2$.\end{proof}

\begin{remark}
It follows that in the Grothendieck group of $\Rep (SL_2)_{\perf}$, we have
$$[\bnab(\lambda)]-[\bW(\lambda)]\;=\;[\Delta(\lambda)[0]]-[\Delta(\lambda-2)[0]].$$
\end{remark}

\begin{example}
We have a short exact sequence
$$0\to \bL(1)\to \bnab(1)\to \bW(1)\to 0.$$
For $1<\lambda<p-1$, we have
$$\bnab(\lambda)/\bL(\lambda)\;\cong\;\bW(\lambda)/\bL(\lambda-2).$$
\end{example}


\subsection*{Acknowledgement}

The research was partly supported by ARC project DP210100251. The
authors thank Brian Conrad for useful discussions, Pavel Etingof for
help with Example~\ref{ExGm}, Anthony Henderson for suggesting to
include isogenies of perfectly reductive groups { and
  Steve Donkin for providing Example \ref{ex:Donkin} and pointing out \cite{Wang}.}

\end{document}